\def \VL {V_{\textrm{Left}}}
\def \WL {W_{\textrm{Left}}}
\def \VR {V_{\textrm{Right}}}
\def \WR {W_{\textrm{Right}}}
\def \RR {\mathbb R}
\def \eps {\varepsilon}
\def \vphi {\varphi}
\def \cE {\mathcal E}
\def \cT {\mathcal T}
\def \cK {\mathcal K}
\def \cI {\mathcal I}
\def \cK {\mathcal K}
\newtheorem{theorem}{Theorem}[section]
\newtheorem{lemma}[theorem]{Lemma}
\newtheorem{proposition}[theorem]{Proposition}
\newtheorem{corollary}[theorem]{Corollary}
\newtheorem{remark}[theorem]{Remark}
\newtheorem{definition}[theorem]{Definition}
\def\myffrac#1#2 in #3{\raise 2.6pt\hbox{$#3 #1$}\mkern-1.5mu\raise 0.8pt\hbox{$
#3/$}\mkern-1.1mu\lower 1.5pt\hbox{$#3 #2$}}
\def\qed{\hfill $\vcenter{\hrule height .3mm
\hbox {\vrule width .3mm height 2.1mm \kern 2mm \vrule width .3mm
height 2.1mm} \hrule height .3mm}$ \bigskip}
\begin{document}

\title{Convex geometry and waist inequalities}
\author{Bo'az Klartag}
\date{}
\maketitle

\begin{abstract}  This paper presents connections between
Gromov's work on  isoperimetry of waists and Milman's
work on the $M$-ellipsoid of a convex body. It is proven that
any convex body $K \subseteq \RR^n$ has a linear image $\tilde{K} \subseteq \RR^n$
of volume one satisfying the following waist inequality: Any continuous map $f:\tilde{K} \rightarrow \RR^{\ell}$
has a fiber  $f^{-1}(t)$ whose $(n-\ell)$-dimensional volume is at least $c^{n-\ell}$,
where $c > 0$ is a universal constant. In the specific case where $K = [0,1]^n$
it is shown that one may take $\tilde{K} = K$ and $c = 1$, confirming a conjecture by Guth. We furthermore exhibit relations between waist inequalities
and various geometric characteristics of the convex body $K$.
\end{abstract}


\section{Introduction}
\label{intro}

 The spherical waist inequality
states that any continuous function $f$ from the unit sphere $S^n = \{ x \in \RR^{n+1} \, ; \, |x| = 1  \}$
to $\RR^\ell$
has a large fiber: there exists $t \in \RR^\ell$ such that the fiber $f^{-1}(t)$ has a large $(n-\ell)$-dimensional volume,
at least as large as that of the sphere $S^{n- \ell}$.
In Gromov's paper \cite{Gr_filling}, this inequality is extracted from Almgren's work in the 1960s,
up to some mild technical assumptions on the function $f$.
A completely new proof of a spherical waist inequality
was given by Gromov in \cite{Gr}, where additionally the following Gaussian waist inequality  is proven:

\begin{theorem} \label{thm_gwaist}
Let $1 \leq \ell \leq n$ and let $f: \RR^n \rightarrow \RR^\ell$ be a continuous function. Then there exists $t \in \RR^\ell$
such that the fiber  $L = f^{-1}(t)$ satisfies
\begin{equation}  \gamma_n(L + r B^n) \geq \gamma_{\ell}( r  B^{\ell} ) \qquad \qquad \text{for all} \ r > 0. \label{eq_512} \end{equation}
\end{theorem}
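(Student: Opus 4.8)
The plan is to prove the case $\ell = 1$ by hand from the Gaussian isoperimetric inequality, and to reduce the general case to an Almgren--Gromov min--max applied to the family of fibers of $f$, the decisive ingredient there being a Gaussian-weighted monotonicity formula that promotes a single area estimate to the whole one-parameter family of neighbourhood inequalities \eqref{eq_512}.

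\emph{The case $\ell = 1$.} Take $t$ to be a median of $f$ with respect to $\gamma_n$, so that $\gamma_n(\{f\le t\})\ge 1/2$ and $\gamma_n(\{f\ge t\})\ge 1/2$. Write $L=f^{-1}(t)$ and $N_r = L+rB^n = \{x : \mathrm{dist}(x,L)\le r\}$. Every point of $\RR^n\setminus N_r$ satisfies $f\ne t$, so $\RR^n\setminus N_r = C_-\sqcup C_+$ with $C_\pm\subseteq\{\pm(f-t)>0\}$. If some $x\in C_+$ were within distance $r$ of a point $y$ with $f(y)\le t$, the intermediate value theorem on the segment $[y,x]$ would place a point of $L$ within distance $r$ of $x$, a contradiction; hence $C_+\subseteq\RR^n\setminus(\{f\le t\}+rB^n)$ and, symmetrically, $C_-\subseteq\RR^n\setminus(\{f\ge t\}+rB^n)$. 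By the Gaussian isoperimetric inequality, $\gamma_n(A+rB^n)\ge\Phi(\Phi^{-1}(\gamma_n(A))+r)$ for every Borel $A$, where $\Phi$ is the one-dimensional standard Gaussian distribution function; applying this to $A=\{f\le t\}$ and to $A=\{f\ge t\}$ and using that both have measure $\ge 1/2$ yields $\gamma_n(C_-),\gamma_n(C_+)\le 1-\Phi(r)$. Therefore
\[
\gamma_n(L+rB^n)=1-\gamma_n(C_-)-\gamma_n(C_+)\ge 2\Phi(r)-1=\gamma_1([-r,r])=\gamma_1(rB^1),
\]
which is \eqref{eq_512} for $\ell=1$.

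\emph{The case of general $\ell$.} First I would reduce, by mollifying $f$, perturbing it to general position, and a compactness argument in the parameter $t$ (the subset of $\RR^\ell$ on which the closed condition \eqref{eq_512} holds survives uniform limits of the maps), to a smooth $f$ whose fibers are smooth $(n-\ell)$-dimensional submanifolds for almost every $t$. The fibers $\{f^{-1}(t)\}$ then form a sweepout of $\RR^n$ by $(n-\ell)$-cycles detecting a nontrivial class, and one minimises, over all sweepouts in the same class, a ``Gaussian isoperimetric width'' recording the worst deficit in \eqref{eq_512} along the family. Standard min--max machinery (pull-tight and regularity) produces a minimising sweepout whose widest slice is a stationary integral $(n-\ell)$-varifold $V$ for the Gaussian-weighted area; the Gaussian-weighted monotonicity formula --- the mechanism behind the Colding--Minicozzi entropy --- then forces $\gamma_n(\mathrm{supp}\,V+rB^n)\ge\gamma_\ell(rB^\ell)$ for \emph{all} $r>0$, with equality precisely for an affine $(n-\ell)$-plane through the origin, and unwinding the min--max returns a genuine fiber of $f$ obeying the same bound.

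\emph{The main obstacle.} Everything difficult is concentrated in this last step: setting up the min--max and the attendant partial-regularity theory in the non-compact weighted-Gaussian category, and --- more essentially --- upgrading a weighted-area lower bound on one critical slice to the full family of neighbourhood estimates \eqref{eq_512} valid simultaneously for every $r$, for which a monotonicity formula rather than bare isoperimetry is needed. An alternative route, trading this for a different hard input, is the Poincaré limit: apply the neighbourhood form of the spherical waist inequality to $f\circ\pi_n:\sqrt N\,S^{N-1}\to\RR^\ell$ and let $N\to\infty$, using that the pushforward of the uniform probability measure on $\sqrt N\,S^{N-1}$ under the coordinate projection $\pi_n$ tends to $\gamma_n$; this merely relocates the analytic core into the spherical waist inequality, whose known proofs rest on the same Almgren min--max.
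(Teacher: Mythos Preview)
Your $\ell=1$ argument is correct and is exactly the reduction to Gaussian isoperimetry that the paper alludes to in its introduction.

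For $\ell\ge 2$ your proposal is not a proof, and the gap is more serious than a matter of setting up min--max in a weighted non-compact setting. The step you call ``the main obstacle'' is in fact not available: there is no Gaussian monotonicity formula that converts a single weighted-area lower bound on a stationary $(n-\ell)$-varifold $V$ into the full family of neighbourhood estimates $\gamma_n(\mathrm{supp}\,V+rB^n)\ge\gamma_\ell(rB^\ell)$ for \emph{every} $r>0$. The Colding--Minicozzi entropy is a monotonicity of the Gaussian-weighted $(n-\ell)$-area under rescaling (equivalently along mean curvature flow), and it characterises planes through the origin among self-shrinkers; it does not control the Gaussian measure of tubular neighbourhoods of arbitrary radius. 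Your Poincar\'e-limit alternative does not help either: the neighbourhood form of the spherical waist inequality it presupposes is itself proved by the very method used in this paper, not by Almgren min--max.

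The paper's proof takes a completely different route, with no geometric measure theory. One verifies a pointwise \emph{peak property} of the Gaussian density: on any affine $\ell$-plane $E$ and for any log-concave $\psi$ on $E$ normalised so that $\int_E \varphi\psi=1$, there is $x_0\in E$ with $\int_{E\cap(x_0+rB^n)}\varphi\psi\ge\gamma_\ell(rB^\ell)$ for all $r$ (Lemma~\ref{lem_1724}, a short one-dimensional log-concavity computation). The substance is then Theorem~\ref{prop_941}, Gromov's multi-dimensional localization: a Borsuk--Ulam-type argument (Theorem~\ref{thm_921}, Theorem~\ref{thm2}) produces a partition of $\RR^n$ into $2^N$ convex $(\ell,\delta)$-pancakes on which a continuously selected ``center'' $c(P_i)$ has the same $f$-value $t$; on each pancake one projects onto its $\ell$-plane, applies the peak property, and sums. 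The inequality \eqref{eq_512} for all $r$ comes for free from the peak property holding for all $r$ on each pancake --- no monotonicity formula is needed, and no regularity of the fiber is ever invoked.
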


Here, $\gamma_n$ is the standard Gaussian measure on $\RR^n$, i.e., its density is $x \mapsto \frac{e^{-|x|^2/2}}{(2 \pi)^{n/2}}$, while $B^{n} = \{ x \in \RR^{n} \, ; \, |x| \leq 1 \}$ and
$L + r B^n = \{ x + r y \, ; \, x \in L, y \in B^n\}$. In the case $\ell = 1$, Theorem \ref{thm_gwaist} follows from the well-known
Gaussian isoperimetric inequality. One of the standard proofs of this isoperimetric inequality employs the
convex localization method of Payne and Weinberger \cite{PW}, Gromov and Milman \cite{GM} and Kannan, Lov\'asz and Simonovits \cite{KLS, LS}.
When  $\ell \geq 2$, the proof by Gromov \cite{Gr} combines a Borsuk-Ulam type theorem
with a localization method in which the bisection procedure stops when arriving at an $\ell$-dimensional ``pancake''
rather than a $1$-dimensional ``needle''.
Further explanations and a self-contained proof of Theorem~\ref{thm_gwaist} are given below.

\medskip Our goal in this paper is to emphasize
the relevance of Gromov's technique to convex geometry. For example, it  leads to our next  theorem.
Write $AG_{n, \ell}$ for the collection of all affine $\ell$-dimensional subspaces of $\RR^n$,
the affine Grassmannian. A convex body is a compact, convex set with a non-empty interior.

\begin{theorem} Let $K \subseteq \RR^n$ be a convex body. Then for any $\ell=1,\ldots,n$ and a continuous
function $f: K \rightarrow \RR^{\ell}$,
$$ \sup_{t \in \RR^{\ell}} Vol_{n-\ell}^*(f^{-1}(t)) \cdot \sup_{E \in AG_{n, \ell}} Vol_{\ell}(K \cap E) \geq Vol_n(K). $$
\label{thm_section}
\end{theorem}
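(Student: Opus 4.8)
The plan is to run Gromov's localization-by-bisection scheme directly on $K$ equipped with Lebesgue measure, using the continuous map $f$ to steer the bisections through a Borsuk--Ulam argument, exactly as in the proof of Theorem~\ref{thm_gwaist} but with the target measure being $Vol_n|_K$ rather than $\gamma_n$. The first step is a \emph{pancake localization lemma}: there is a value $t^* \in \RR^\ell$ such that for every $\delta > 0$ one can partition $K$ into convex bodies $\Omega_1,\dots,\Omega_N$ with pairwise disjoint interiors and $\sum_j Vol_n(\Omega_j)=Vol_n(K)$, so that each $\Omega_j$ lies in the $\delta$-neighbourhood of some affine $(n-\ell)$-dimensional subspace $F_j$, and so that $f^{-1}(t^*)$ meets $\Omega_j \cap (a+F_j^\perp)$ for all $a$ in a subset $S_j \subseteq \mathrm{proj}_{F_j}(\Omega_j)$ with $Vol_{n-\ell}(S_j)\ge (1-\delta)\,Vol_{n-\ell}(\mathrm{proj}_{F_j}(\Omega_j))$. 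This is produced by iteratively cutting each current cell with a hyperplane that simultaneously halves its volume and preserves a topological linking condition between $t^*$ and the restriction of $f$ to the boundary of each transverse $\ell$-dimensional section; the existence of such a hyperplane at each stage is where Borsuk--Ulam enters, and letting the number of cuts tend to infinity forces the cells to become thin pancakes along affine $(n-\ell)$-planes through whose transverse slices the fiber $f^{-1}(t^*)$ punctures.

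Granting the lemma, the remainder is bookkeeping. Fix $\delta$ and the partition $\{\Omega_j\}$. For a single cell, Fubini along $F_j$ gives
$$ Vol_n(\Omega_j) \;=\; \int_{\mathrm{proj}_{F_j}(\Omega_j)} Vol_\ell\big(\Omega_j \cap (a+F_j^\perp)\big)\,da \;\le\; Vol_{n-\ell}\big(\mathrm{proj}_{F_j}(\Omega_j)\big)\cdot \sup_{E \in AG_{n,\ell}} Vol_\ell(K\cap E), $$
since $\Omega_j \cap (a+F_j^\perp) \subseteq K \cap (a+F_j^\perp)$ and $a+F_j^\perp$ is an affine $\ell$-plane. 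On the other hand $f^{-1}(t^*)\cap \Omega_j$ contains a set whose orthogonal projection onto $F_j$ contains $S_j$, and since orthogonal projection onto an affine subspace is $1$-Lipschitz (hence does not increase $(n-\ell)$-dimensional Hausdorff measure) and $\mathcal H^{n-\ell}\le Vol^*_{n-\ell}$, we get $Vol^*_{n-\ell}\big(f^{-1}(t^*)\cap \mathrm{int}\,\Omega_j\big)\ge Vol_{n-\ell}(S_j)\ge (1-\delta)\,Vol_{n-\ell}(\mathrm{proj}_{F_j}(\Omega_j))$. Summing over $j$ using disjointness of interiors and combining the two displays,
$$ Vol^*_{n-\ell}\big(f^{-1}(t^*)\big) \;\ge\; (1-\delta)\sum_j Vol_{n-\ell}\big(\mathrm{proj}_{F_j}(\Omega_j)\big) \;\ge\; \frac{(1-\delta)\,Vol_n(K)}{\sup_{E\in AG_{n,\ell}} Vol_\ell(K\cap E)}. $$
Letting $\delta \to 0$ gives $\sup_{t}Vol^*_{n-\ell}(f^{-1}(t)) \ge Vol_n(K)/\sup_{E} Vol_\ell(K\cap E)$, and multiplying through by the (finite, positive) quantity $\sup_{E} Vol_\ell(K\cap E)$ yields the theorem. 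If the lemma only furnishes a $\delta$-dependent value $t^*_\delta$, one extracts a convergent subsequence $t^*_\delta \to t^*$ from the compact set $f(K)$ and invokes lower semicontinuity of $t \mapsto Vol^*_{n-\ell}(f^{-1}(t))$; I expect the bisection construction to single out one $t^*$ directly.

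The main obstacle is the pancake localization lemma: isolating the correct linking invariant of a convex cell and checking, via Borsuk--Ulam, that it propagates across a volume-halving cut, while ensuring that infinitely many cuts drive the cells to genuine pancakes along affine $(n-\ell)$-subspaces. This is the technical heart of Gromov's method, to be extracted from the self-contained proof of Theorem~\ref{thm_gwaist}; the only genuinely new input is the elementary geometric remark used above, that a transverse $\ell$-slice of a pancake cell is an $\ell$-slice of $K$ and therefore has volume at most $\sup_{E\in AG_{n,\ell}}Vol_\ell(K\cap E)$. A secondary, routine-but-careful point is fixing the definition of $Vol^*_{n-\ell}$ (as lower Minkowski content) so that it dominates Hausdorff measure and interacts correctly with the projections and the cell decomposition.
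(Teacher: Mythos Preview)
There is a genuine gap at the heart of your localization step: the pancake dimension is wrong. Gromov's bisection scheme, driven by an $\RR^\ell$-valued invariant, uses $\ell$-dimensional families of hyperplanes (the $\ell$-dimensional subspheres $S_1,\ldots,S_N \subseteq S^n$) and therefore produces $(\ell,\delta)$-pancakes, i.e.\ convex cells close to affine $\ell$-planes, not to affine $(n-\ell)$-planes. This is exactly what the Borsuk--Ulam count dictates: with $\ell$ scalar conditions to equalize, you can only afford $\ell$ degrees of freedom in each cutting hyperplane, and a dense family of such hyperplanes squeezes cells down to $\ell$-dimensional objects (Proposition~\ref{prop_pancake}). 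Your proposed lemma asserts cells near $(n-\ell)$-planes, and the ``linking condition'' that would have to replace the $\ell$ scalar conditions is left undefined; I do not see how to formulate one so that Borsuk--Ulam applies and simultaneously forces the fiber $f^{-1}(t^*)$ to puncture the transverse $\ell$-slices. In short, the machine does not output what you feed into your bookkeeping step.

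Even granting your lemma, the measure-theoretic closing has holes. The inequality $\mathcal H^{n-\ell} \le Vol^*_{n-\ell}$ is false for general closed sets (lower Minkowski content can be strictly below Hausdorff measure), lower Minkowski content is not additive over a cell decomposition, and $t \mapsto Vol^*_{n-\ell}(f^{-1}(t))$ is not lower semicontinuous, so the extraction of a single $t^*$ from a sequence $t^*_\delta$ is not justified. The paper sidesteps all of this by never touching Hausdorff measure or fiber geometry directly: it works with $\ell$-dimensional pancakes and verifies a $(B^n,\ell,I)$-peak property for the uniform density on $K$, where $I(r) \approx \beta_\ell r^\ell / M$ and $M=\sup_{E\in AG_{n,\ell}} Vol_\ell(K\cap E)$. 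The key input inside each $\ell$-pancake is Spingarn's inequality (Lemma~\ref{lem_1053_}): the value of a log-concave density at its barycenter is at least the reciprocal of the volume of its support. This yields a uniform lower bound on the $\mu$-measure of $r$-balls centered at the barycenter, and Theorem~\ref{prop_941} converts that directly into a lower bound on $Vol_n(f^{-1}(t)+rB^n)$, from which $Vol^*_{n-\ell}(f^{-1}(t)) \ge Vol_n(K)/M$ follows by dividing by $\beta_\ell r^\ell$ and letting $r\to 0$.
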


In this paper,  $Vol^*_{\ell}$ is the {\it lower Minkowski $\ell$-volume}. That is, for $A \subseteq \RR^n$ and $0 \leq \ell \leq n$ we set
$$ Vol^*_{n-\ell}(A) = \liminf_{\eps \rightarrow 0^+} \frac{Vol_n(A + \eps B^n)}{\beta_\ell \cdot \eps^\ell } $$
where $Vol_n$ is the Lebesgue measure in $\RR^n$ and
where $\beta_\ell = \frac{\pi^{\ell/2}}{\Gamma(\ell/2 +1)} = Vol_{\ell}(B^{\ell})$.
In the case where $f: K \rightarrow \RR^\ell$ is a real-analytic map, it is known
that the lower Minkowski $\ell$-volume of $f^{-1}(t)$ coincides with its
$\ell$-dimensional Hausdorff volume \cite{AFP, Krantz}.

\medskip
It was discovered by Milman \cite{Mil} that any convex body $K \subseteq \RR^n$
has a linear image $\tilde{K}$ of the same volume, called its $M$-position, with certain non-trivial properties such as a reverse Brunn-Minkowski inequality.
Building upon these ideas, we obtain  the following:

\begin{theorem}
Let $K \subseteq \RR^n$ be a convex body of volume one. Then there exists a volume-preserving linear map $T_K: \RR^n \rightarrow \RR^n$
such that $\tilde{K} = T_K(K)$ has the following property: Let $1 \leq \ell \leq n$ and let $f: \tilde{K} \rightarrow \RR^{\ell}$ be a continuous map. Then there exists
$t \in \RR^\ell$ with
\begin{equation}  Vol^*_{n-\ell}( f^{-1}(t) ) \geq c^{n-\ell} \label{eq_1512} \end{equation}
where $c > 0$ is a universal constant. \label{thm_1752}
\end{theorem}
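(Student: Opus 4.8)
\medskip\noindent
The plan is to deduce Theorem~\ref{thm_1752} from Theorem~\ref{thm_section}. If $\tilde K$ is any volume-preserving linear image of $K$ and $f\colon\tilde K\to\RR^{\ell}$ is continuous, Theorem~\ref{thm_section} gives
\[
\sup_{t\in\RR^{\ell}}Vol^*_{n-\ell}\!\big(f^{-1}(t)\big)\ \geq\ \frac{Vol_n(\tilde K)}{\displaystyle\sup_{E\in AG_{n,\ell}}Vol_{\ell}(\tilde K\cap E)}\ =\ \frac{1}{\displaystyle\sup_{E\in AG_{n,\ell}}Vol_{\ell}(\tilde K\cap E)}.
\]
Hence it suffices to produce one volume-preserving linear map $T_K$ for which $\tilde K=T_K(K)$ obeys the section estimate
\begin{equation}\label{eq_secbound}
Vol_{\ell}(\tilde K\cap E)\ \leq\ C^{\,n-\ell}\qquad\text{for every }1\leq\ell\leq n\text{ and every }E\in AG_{n,\ell},
\end{equation}
with $C\geq1$ universal; the theorem then holds with $c=1/C$. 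The case $\ell=n$ is the trivial identity $Vol_n(\tilde K)=1$, so only $\ell<n$ matters, and the regime where $n-\ell$ is small is the serious one.

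For $\ell$ not too close to $n$, say $n-\ell\geq n/2$, an elementary argument in Milman's $M$-position already gives \eqref{eq_secbound}: normalising so that the $M$-ellipsoid is the Euclidean ball $R_nB^n$ of volume one, $R_n=\beta_n^{-1/n}\sim\sqrt{n/(2\pi e)}$, we have $\tilde K\subseteq\bigcup_{i\leq N}(x_i+R_nB^n)$ with $N\leq e^{C_0n}$, so the slice $\tilde K\cap E$ is covered by $N$ balls of radius $\leq R_n$ inside the $\ell$-plane $E$ and $Vol_{\ell}(\tilde K\cap E)\leq e^{C_0n}\beta_{\ell}R_n^{\ell}\leq e^{C_0n}(C_1n/\ell)^{\ell/2}$; since $\tfrac{\ell}{2}\log(n/\ell)\leq n/(2e)$ while $n-\ell\geq n/2$, this is at most $C^{\,n-\ell}$ once $C$ is a large enough universal constant. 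The difficulty is concentrated in the complementary range, sections of small codimension $k=n-\ell$, where the covering bound is useless: it always carries the factor $e^{C_0n}$, whereas the target $C^{k}$ is only exponential in the codimension.

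For that range one should instead work in the isotropic position (which one takes as $T_K$) and read \eqref{eq_secbound} off the theory of marginals: writing $G=F^{\perp}$, one has $\sup_{E\in AG_{n,\ell}}Vol_{\ell}(\tilde K\cap E)=\sup_{\dim G=k}\|g_{G}\|_{\infty}$, where $g_{G}$ is the density of the $k$-dimensional marginal of the uniform probability measure on $\tilde K$ onto $G$; since $\tilde K$ is isotropic, ${\rm Cov}(g_{G})=L_{\tilde K}^{2}\,{\rm Id}$, and therefore $\|g_{G}\|_{\infty}=\big(L_{g_{G}}/L_{\tilde K}\big)^{k}$. Thus \eqref{eq_secbound} for the sections parallel to a fixed $F$ is precisely the statement that the maximal density of a $k$-dimensional marginal of $\tilde K$ is at most $C^{k}$, i.e. $L_{g_{G}}\leq C\,L_{\tilde K}$: the isotropic constant of a marginal is at most an absolute multiple of that of the whole body. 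Establishing this with a constant uniform in $k$ is exactly where the $M$-ellipsoid and reverse-Santal\'o circle of ideas (and, ultimately, the slicing/hyperplane estimates) come in, which is the connection to Milman's work advertised in the introduction. One should also check that a single linear image can serve both ranges of $\ell$ at once — the marginal argument in fact covers all $\ell$, so this is automatic.

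I expect the genuine obstacle to be this last input: controlling near-top-dimensional sections of $\tilde K$ uniformly over all affine subspaces, by a constant that is exponential only in the codimension $n-\ell$ and independent of the ambient dimension $n$. This is precisely the information that the $M$-ellipsoid machinery supplies beyond an elementary volume or covering comparison, and it is what turns the trivial lower bound $Vol_n(\tilde K)/\sup_E Vol_\ell(\tilde K\cap E)$ into the uniform estimate $c^{\,n-\ell}$. (For $K=[0,1]^{n}$ none of this is needed: there $\tilde K=K$ and $c=1$ come out of a localization argument that directly exploits the product structure of the cube, which is the statement of Guth's conjecture.)
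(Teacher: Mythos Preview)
Your reduction via Theorem~\ref{thm_section} to the section bound \eqref{eq_secbound} is exactly right, and your identification $\sup_E Vol_\ell(\tilde K\cap E)=\|g_G\|_\infty=(L_{g_G}/L_{\tilde K})^{\,k}$ for $\tilde K$ isotropic is correct. The gap is the step you flag but do not prove: the inequality $L_{g_G}\le C\,L_{\tilde K}$ for \emph{every} marginal $g_G$ of \emph{every} isotropic convex body is not a consequence of $M$-ellipsoid or reverse Santal\'o theory --- it is equivalent to Bourgain's slicing conjecture. Indeed, given any $K_1\subseteq\RR^m$ in isotropic position, form $K=(\alpha K_1)\times(\beta[-\tfrac12,\tfrac12]^{\,n-m})$ with $\alpha,\beta$ chosen so that $K$ is isotropic of volume one; then $L_K=L_{K_1}^{m/n}\cdot 12^{-(n-m)/(2n)}\to 1/\sqrt{12}$ as $n\to\infty$, while the marginal onto the first $m$ coordinates has isotropic constant $L_{K_1}$. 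Thus a universal bound $L_{g_G}\le C\,L_K$ forces $L_{K_1}\le C'$, which is precisely slicing. Your proposal therefore assumes an open conjecture at its crucial step. (Note also that the covering argument for $n-\ell\ge n/2$ uses the $M$-position, not the isotropic one; you cannot have both at once, so that paragraph is not actually contributing.)

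The paper sidesteps this by never trying to bound sections of $\tilde K$ itself. The map $T_K$ is \emph{not} the isotropizing map but the one putting $K\cap(-K)$ in the Gaussian $M$-position. The missing idea is Theorem~\ref{thm_psitwo}: inside $\tilde K$ one finds a centrally-symmetric convex body $T$ in isotropic position with $\psi_2$-constant at most $C_1$ and $(Vol_n(\tilde K)/Vol_n(T))^{1/n}\le C_2$. For a $\psi_2$-body the marginal bound \emph{is} available (marginals inherit the $\psi_2$ property, and $\psi_2$ log-concave measures have bounded isotropic constant), so Lemma~\ref{lem_1105} gives $Vol_\ell(T\cap E)\le (C A)^{n-\ell}Vol_n(T)^{\ell/n}$, whence Corollary~\ref{cor_1716} yields $w_\ell(T)\ge c\,Vol_n(T)^{1/n}\ge\tilde c$. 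Monotonicity of the waist ($T\subseteq\tilde K$ up to the Rogers--Shephard factor) then transfers this to $\tilde K$. In short: rather than controlling marginals of an arbitrary body, one replaces the body by a nearby $\psi_2$-body for which the control is genuinely available.
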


Theorem~\ref{thm_1752}
seems new even in the case
of a centrally-symmetric body $K$ (i.e., $K = -K$) and a linear function $f$. In this case, Pisier's regular position \cite[Chapter 7]{Pis} yields a slightly weaker
estimate in which  $c$ is replaced by $c / \log (1 + n / (n-\ell))$ on the right-hand side of (\ref{eq_1512}).
In general, we do not know the optimal value of the universal constant from Theorem~\ref{thm_1752}. More interestingly,
we currently do not have a counter-example to the variant of Theorem~\ref{thm_1752} in which
we replace $c^{n-\ell}$ by $c^\ell$ on the right-hand
side of (\ref{eq_1512}). Such a variant would  imply Bourgain's hyperplane conjecture \cite{Bourgain}
as well as the isoperimetric conjecture of Kannan, Lov\'asz and Simonovits \cite{KLS}.

\medskip The map $T_K$ that we use in Theorem~\ref{thm_1752}
is defined via an optimization procedure. In fact, assuming that the barycenter of $K$ is at the origin and abbreviating $K_0 = (-K) \cap K$, the map $T_K$ satisfies
$$
\gamma_n \left ( \frac{T_K( K_0) }{Vol_n( K_0)^{1/n}}   \right ) = \sup_{T \in SL_n(\RR)} \gamma_n \left ( \frac{T( K_0) }{Vol_n( K_0)^{1/n}} \right ), $$
where $SL_n(\RR)$ is the group of volume-preserving linear maps in $\RR^n$.
The map  $T_K$ respects the symmetries of the convex set $K$. That is, for $K \subseteq \RR^n$ with barycenter at the origin, denote
$$ Symm(K) = \{ T: \RR^n \rightarrow \RR^n  \, ; \, T \textrm{ is an isometry with } T(K) = K \}. $$
Then $T_K T = T T_K$ for all $T \in Symm(K)$. For example, in the case where $K = [-1/2,1/2]^n$ is a unit cube,
the group $Symm(K)$ consists of $2^n \cdot n!$ elements, and the only volume-preserving linear map commuting
with $Symm(K)$ is the identity map. Hence $T_K$ is the identity in the case where $K = [-1/2,1/2]^n$. In this specific case
we may determine the optimal value of the constant $c$, as follows:

\begin{theorem}[``waist of the cube''] For any $1 \leq \ell \leq n$ and a continuous function $f: (0,1)^n \rightarrow \RR^\ell$ there exists $t \in \RR^\ell$ with $Vol^*_{n-\ell}(f^{-1}(t)) \geq 1$.
\label{thm_1127_}
\end{theorem}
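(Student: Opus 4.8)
The plan is to deduce Theorem~\ref{thm_1127_} from the Gaussian waist inequality (Theorem~\ref{thm_gwaist}) by transporting the problem from the cube to Gaussian space via the coordinatewise Gaussian distribution function. Write $\Phi(s) = \gamma_1((-\infty, s])$ for $s \in \RR$, and let $\Psi = (\Phi, \ldots, \Phi) : \RR^n \rightarrow (0,1)^n$ act coordinatewise. This is a $C^\infty$ bijection whose Jacobian determinant at $x$ equals $\prod_{i=1}^n \Phi'(x_i) = (2\pi)^{-n/2} e^{-|x|^2/2}$, so that $\Psi$ pushes $\gamma_n$ forward to the uniform (Lebesgue) measure on $(0,1)^n$. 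Two features of $\Psi$ will be used in tandem: first, since $\Phi' \leq \Phi'(0) = 1/\sqrt{2 \pi}$ everywhere, the map $\Psi$ is $(2\pi)^{-1/2}$-Lipschitz with respect to the Euclidean norm; second, by injectivity together with $\Psi_* \gamma_n = Vol_n|_{(0,1)^n}$, one has $Vol_n(\Psi(A)) = \gamma_n(A)$ for every Borel set $A \subseteq \RR^n$.

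Given a continuous $f : (0,1)^n \rightarrow \RR^\ell$, apply Theorem~\ref{thm_gwaist} to the continuous map $g = f \circ \Psi : \RR^n \rightarrow \RR^\ell$ to obtain $t \in \RR^\ell$ such that $\tilde L := g^{-1}(t) = \Psi^{-1}(f^{-1}(t))$ satisfies $\gamma_n(\tilde L + r B^n) \geq \gamma_\ell(r B^\ell)$ for all $r > 0$. I claim that $L := f^{-1}(t)$ satisfies $Vol^*_{n-\ell}(L) \geq 1$. Set $\eps = r / \sqrt{2\pi}$. Since $\Psi$ is $(2\pi)^{-1/2}$-Lipschitz and surjective onto $(0,1)^n \supseteq L$, we have $\Psi(\tilde L + r B^n) \subseteq \Psi(\tilde L) + \eps B^n = L + \eps B^n$, whence
$$ Vol_n(L + \eps B^n) \;\geq\; Vol_n\big(\Psi(\tilde L + r B^n)\big) \;=\; \gamma_n(\tilde L + r B^n) \;\geq\; \gamma_\ell(r B^\ell) \;=\; \gamma_\ell\big(\sqrt{2\pi}\, \eps\, B^\ell\big). $$
Using $e^{-|x|^2/2} \geq e^{-\pi \eps^2}$ on $\sqrt{2\pi}\, \eps\, B^\ell$ gives $\gamma_\ell(\sqrt{2\pi}\, \eps\, B^\ell) \geq (2\pi)^{-\ell/2} e^{-\pi \eps^2} \beta_\ell (\sqrt{2 \pi}\, \eps)^\ell = e^{-\pi \eps^2} \beta_\ell \eps^\ell$, so $Vol_n(L + \eps B^n)/(\beta_\ell \eps^\ell) \geq e^{-\pi \eps^2} \rightarrow 1$ as $\eps \rightarrow 0^+$, which is precisely the assertion $Vol^*_{n-\ell}(L) \geq 1$.

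There is no serious obstacle: the content of the argument is the observation that the one-dimensional Gaussian distribution function is at once measure-transporting and a strict contraction, and that the normalizing constants $(2\pi)^{\ell/2}$, $\beta_\ell$ and the Lipschitz constant $(2\pi)^{-1/2}$ of $\Psi$ conspire so that the estimate inherited from Theorem~\ref{thm_gwaist} is exactly $\beta_\ell \eps^\ell(1 + o(1))$, giving the sharp value $c = 1$ from the discussion preceding the theorem rather than a smaller universal constant. The only points needing a little care are (i) the identity $Vol_n(\Psi(A)) = \gamma_n(A)$ — in fact only the inequality $Vol_n(\Psi(A)) \geq \gamma_n(A)$ is used, and it follows at once from injectivity of $\Psi$ and $\Psi_* \gamma_n = Vol_n|_{(0,1)^n}$ — and (ii) the routine measurability bookkeeping for $\tilde L + r B^n$ and its continuous image, which is immediate since $\tilde L = g^{-1}(t)$ is closed, hence $\sigma$-compact. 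Finally one may remark that the constant $1$ is optimal, as witnessed by the coordinate projection $f(x) = (x_1, \ldots, x_\ell)$, each of whose fibers is an $(n-\ell)$-dimensional unit cube.
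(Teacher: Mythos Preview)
Your proof is correct and follows essentially the same route as the paper: transport via the coordinatewise Gaussian distribution function $\Psi$, use that $\Psi$ is $(2\pi)^{-1/2}$-Lipschitz and pushes $\gamma_n$ to Lebesgue measure on the cube, apply Theorem~\ref{thm_gwaist} to $f\circ\Psi$, and read off the limit of $\gamma_\ell(\sqrt{2\pi}\,\eps B^\ell)/(\beta_\ell\eps^\ell)$ as $\eps\to 0^+$. Your additional remarks on measurability and optimality are accurate and compatible with the paper's treatment.
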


Theorem~\ref{thm_1127_} was conjectured by Guth \cite{Guth}. In the case where $f$ is a linear function,
Theorem~\ref{thm_1127_} goes back to Vaaler \cite{Vaaler}.
The estimate of the theorem is sharp, as is demonstrated by the example where $f(x_1,\ldots,x_n) = (x_1,\ldots,x_\ell)$.
Theorem~\ref{thm_1127_} is deduced from Theorem~\ref{thm_gwaist} via a transportation trick, see page \pageref{trick} below.
 For a convex body $K \subseteq \RR^n$
and for $\ell=1,\ldots,n$ we define the $\ell$-waist of  $K$ via
$$ w_{\ell}(K) = \inf_{f: K \rightarrow \RR^{n-\ell} } \sup_{t \in \RR^{n- \ell}} \left( Vol_{\ell}^*(f^{-1}(t)) \right)^{1/\ell}, $$
where the infimum runs over all continuous maps $f: K \rightarrow \RR^{n-\ell}$. Note that $w_{\ell}$ is $1$-homogenous in $K$, in the sense that $w_{\ell}(\lambda K) = \lambda \cdot w_{\ell}(K)$ for all $\lambda > 0$.
The $\ell$-waist is translation invariant, and it is also monotone as $w_{\ell}(A) \leq w_{\ell}(B)$ when $A \subseteq B$.
Thus $w_{\ell}(K)$ depends continuously on $K$ in the space of convex bodies.
Theorem~\ref{thm_1752} states that for any convex body $K \subseteq \RR^n$
there exists a volume-preserving linear transformation $T_K : \RR^n \rightarrow \RR^n$ with
$$ w_{\ell}(T_K(K)) \geq c \cdot Vol_n(K)^{1/n} \qquad \qquad (\ell=1,\ldots,n). $$
It is also possible to relate $w_{\ell}(K)$ to other familiar geometric characteristics of $K$. For example,
when $K$ contains the origin in its interior we establish the lower bound
\begin{equation}  w_{\ell}(K) \geq  \frac{\tilde{c}}{\sqrt{n} \cdot M(K)}
\label{eq_938}
\end{equation}
where $M(K) = \int_{S^{n-1}} \| x \|_K d \sigma(x)$  and $\| x \|_K = \inf \{ \lambda > 0 \, ; \, x / \lambda \in K  \}$
while $\sigma$ is the uniform probability measure on the sphere $S^{n-1}$.
Equivalently, $M(K)$ is half of the mean width of the polar body of $K$. Of course, $\tilde{c}$ in (\ref{eq_938}) is a positive, universal constant.
When $K = [-1,1]^n$, the estimate (\ref{eq_938}) is a bit worse than the optimal estimate, but only by a factor that is logarithmic in the dimension.
One may wonder whether $w_{\ell}(K)$ may be completely described, up to a universal constant, by some geometric properties of the convex
body $K$ itself.
When $E \subseteq \RR^n$ is a $k$-dimensional linear subspace, we show that
\begin{equation}  w_{\ell}(Proj_E(K)) \geq w_{\ell}(K) \qquad \qquad \ell=1,\ldots,k, \label{eq_944_}
\end{equation}
where $Proj_E$ is the orthogonal projection operator onto the subspace $E$ in $\RR^n$.
Furthermore,  in the case where $K$ is centrally-symmetric  we obtain
\begin{equation}
w_{\ell}(K \cap E) \geq  w_{\ell}(K) / 2 \qquad \qquad \ell=1,\ldots,k.
\end{equation}
Theorem \ref{thm_1127_} and inequality (\ref{eq_944_}) yield lower bounds on the $\ell$-waist of zonotopes.
In addition to the localization technique,
the second ingredient in our proof of Theorem~\ref{thm_1752} is related to the concept
of $\psi_2$-bodies. For $1 \leq \alpha \leq 2$,  the $\psi_\alpha$-constant
of a convex body $K \subseteq \RR^n$ with barycenter at the origin is the infimum over all $A > 0$ with the following property: For any linear functional $L: \RR^n \rightarrow \RR$,
$$ \left( \frac{\int_K |L(x)|^p dx}{Vol_n(K)} \right)^{1/p} \leq A p^{1/\alpha} \frac{\int_K |L(x)| dx}{Vol_n(K)}  \qquad \text{for all} \ p > 1. $$
For example, a computation reveals that the $\psi_2$-constant of an ellipsoid is at most $C$, where $C > 0$ is a universal constant.
A well-known consequence of the Brunn-Minkowski inequality is that 
the $\psi_1$-constant of any convex body in any dimension is bounded by a universal constant
(see, e.g., \cite[Section 3.5.3]{AGM}).

\begin{theorem} Let $K \subseteq \RR^n$ be a convex body with barycenter at the origin. Then there exists a centrally-symmetric convex body $T \subseteq K$
whose $\psi_2$-constant is at most $C_1$, such that
$$ \left( \frac{Vol_n(K)}{Vol_n(T)} \right)^{1/n} < C_2. $$
Here, $C_1, C_2 > 0$ are universal constants. \label{thm_psitwo}
\end{theorem}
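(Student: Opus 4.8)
The plan is to reduce to the centrally-symmetric case, bring the body into a good linear position (Milman's $M$-position, which in this paper can be realised as the Gaussian-optimal position used to define $T_K$), extract a symmetric subbody of proportional volume by a covering argument, and then ``regularise'' it — cut away the directions along which it has heavy-tailed marginals — to turn it into a $\psi_2$-body.

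First I would pass to $K_0 = K \cap (-K)$. Since the barycentre of $K$ is at the origin, the Milman--Pajor inequality gives $Vol_n(K_0) \geq 2^{-n} Vol_n(K)$, so it suffices to find a centrally-symmetric $T \subseteq K_0$ with $\psi_2$-constant $\le C_1$ and $(Vol_n(K_0)/Vol_n(T))^{1/n} \le C_2$; and since the statement is invariant under invertible linear maps and under dilations, I may choose the linear position and the scale of $K_0$ freely. Normalising $Vol_n(K_0) = \beta_n$ and applying Milman's theorem, I put $K_0$ in the position in which $B^n$ is its $M$-ellipsoid, so that $N(K_0, B^n), N(B^n, K_0), N(K_0^\circ, B^n), N(B^n, K_0^\circ)$ are all at most $e^{Cn}$ (equivalently one can use the position maximising $\gamma_n(S(K_0))$ over $S \in SL_n(\RR)$). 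A first, cheap consequence is a symmetric subbody of proportional volume: covering $B^n$ by at most $e^{Cn}$ translates of $K_0$, one translate $x_0 + K_0$ satisfies $Vol_n(B^n \cap (x_0 + K_0)) \geq e^{-Cn}\beta_n$, and then $\tfrac12\big[(B^n \cap (x_0+K_0)) - (B^n \cap (x_0+K_0))\big]$ lies inside $B^n \cap K_0$, is centrally symmetric, and by the Brunn--Minkowski inequality has volume at least $e^{-Cn}\beta_n$.

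The difficulty is that $B^n \cap K_0$ (or the body just produced) need not be a $\psi_2$-body: a thin bicone such as the image of $\mathrm{conv}(B^{n-1}\times\{0\}, \pm 2 e_1)$ can sit in $M$-position with $M$-ellipsoid $B^n$ yet have a linear marginal that behaves like an exponential variable over a dynamic range of order $\sqrt n$. To repair this, instead of truncating $K_0$ by the Euclidean ball $B^n$, I would truncate it by a body $W$ that is still comparable to a Euclidean ball ``on average'' but is of order $\sqrt n$ only in the sub-Gaussian directions of $K_0$ and of order $1$ in its spike directions; the natural way to produce $W$ is through the $L_q$-centroid bodies $Z_q(\mu_{K_0})$ of the uniform measure $\mu_{K_0}$ on $K_0$ (support function $\theta \mapsto (\int |\langle x,\theta\rangle|^q\,d\mu_{K_0})^{1/q}$), since in the $M$-position these stay comparable to balls at every scale $q \le n$, and the growth of $h_{Z_q}(\theta)$ in $q$ distinguishes sub-Gaussian from heavy-tailed directions. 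Taking $T$ to be a symmetrised, suitably dilated version of $K_0 \cap W$, one then has to verify: (i) $Vol_n(T) \geq c^n\, Vol_n(K_0)$, combining the $M$-position covering numbers for the ``non-spike'' part with Paouris' deviation inequality $\mu_{K_0}(|x| \geq t\sqrt n\, L_{K_0}) \le e^{-c t\sqrt n}$ (where $L_{K_0}$ is the isotropic constant) to see that the mass removed in the spike directions is exponentially negligible; and (ii) $T$ has $\psi_2$-constant bounded by a universal constant, because by construction $T \subseteq C\sqrt n\, Z_2(\mu_T)$ (this controls the $q \to \infty$ scale) while the intermediate $L_q$-norms of linear functionals on $T$ are dominated by those of $K_0$ truncated at the scale prescribed by $W$, which are sub-Gaussian by design.

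The hardest step, and the heart of the proof, is (ii): showing that the regularised body really has \emph{every} marginal sub-Gaussian with a dimension-free constant. The subtlety is that neither of the two naive choices works — truncating by a Euclidean ball of radius $\asymp \sqrt n$ does not in general kill the heavy tails (the bicone), while truncating by the much smaller inscribed ellipsoid destroys the volume (the cube) — so the truncation must be performed anisotropically, at exactly the scale dictated by the centroid bodies, and one must show that this single cut simultaneously removes all the heavy-tailed directions while costing only an $e^{O(n)}$ factor of volume. Making this precise is where the full strength of the reverse Brunn--Minkowski inequality and the $M$-ellipsoid is used, applied not only to $K_0$ itself but to its sections and to the auxiliary bodies $Z_q(\mu_{K_0})$.
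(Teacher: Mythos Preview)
Your reduction to $K_0 = K\cap(-K)$ and to an $M$-type position is fine, but the decisive step (ii) is a genuine gap rather than a proof. You propose to truncate $K_0$ by some body $W$ built from the $L_q$-centroid bodies so that \emph{every} marginal of the truncation becomes sub-Gaussian with a dimension-free constant, while losing only an $e^{O(n)}$ volume factor. You correctly identify why the two naive truncations fail, but you never specify $W$, and the argument you sketch (``the intermediate $L_q$-norms of linear functionals on $T$ are dominated by those of $K_0$ truncated at the scale prescribed by $W$, which are sub-Gaussian by design'') is circular: it presupposes the very direction-by-direction control you are trying to establish. There is no known mechanism by which a single anisotropic cut, defined through the centroid bodies of $K_0$, simultaneously forces all marginals to be $\psi_2$ with a universal constant; Paouris' inequality controls the Euclidean norm, not individual linear functionals, and the reverse Brunn--Minkowski inequality gives no purchase on $\psi_2$-constants.

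The paper's proof sidesteps this difficulty entirely by working with a \emph{measure} rather than a truncation of the body. After putting $K_0$ (of volume one) in the Gaussian $M$-position, one lets $\mu$ be the conditioning of $\gamma_n$ to $K_0$. The point is that $\mu$ is automatically $\psi_2$ with a universal constant: its density is proportional to $e^{-|x|^2/2}\mathbf{1}_{K_0}$, so for any $\xi$ the Laplace transform satisfies $\int e^{\langle\xi,x\rangle}\,d\mu\le e^{|\xi|^2/2}$ (a one-line Pr\'ekopa--Leindler argument), which gives sub-Gaussian moments in every direction with no geometric input whatsoever. The $M$-position is used only to guarantee $\gamma_n(K_0)\ge c^n$, hence $\sup\varphi\le C^n$ for the density $\varphi$ of $\mu$. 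One then converts $\mu$ back into a convex body via Ball's construction $T=K(\mu)$; this $T$ is centrally symmetric, contained in $K_0$, inherits the $\psi_2$-constant of $\mu$ up to a universal factor, and has volume $\ge c^n$ because $\varphi(0)\cdot Vol_n(K(\mu))\ge 1$. So the $\psi_2$ property is borrowed from the Gaussian, not manufactured by carving the body --- that is the idea you are missing.
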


Theorem~\ref{thm_psitwo} is proven in Section~\ref{M_sec}
by employing the Gaussian M-position, which is a specific type
of Milman's position promoted by Bobkov \cite{bobkov}.
Theorem~\ref{thm_1752} is deduced from Theorem~\ref{thm_section}
and Theorem~\ref{thm_psitwo}.

\medskip
Sections \ref{sec2}, \ref{sec3} and \ref{sec4}
are devoted
to the multi-dimensional localization technique in convex geometry, culminating
in Theorem~\ref{prop_941}. The latter theorem is then applied in Section~\ref{merits},
where we prove Theorem \ref{thm_gwaist}, Theorem \ref{thm_section} and many of the statements stipulated above.

\medskip We write $x + A = \{ x + y \, ; \, y \in A \}$ and $\lambda A = \{ \lambda x \, ; \, x \in A \}$ for $x \in \RR^n, \lambda \in \RR$ and $A \subseteq \RR^n$.
The standard scalar product in $\RR^n$ is denoted by $\langle \cdot, \cdot \rangle$
and $|x| = \sqrt{\langle x, x \rangle}$ for $x \in \RR^n$. All of our  group actions are left group actions. We write $\overline{A}$ for the closure
of the set $A$. A smooth function is $C^{\infty}$-smooth.

\medskip \emph{Acknowledgements.} I would like to thank Semyon Alesker, Sasha Barvinok  and Lev Buhovski
for interesting  discussions. I am grateful to Itai Benjamini for bringing Guth's paper \cite{Guth} to my attention.
Supported by a grant from the European Research Council.

\section{The Borsuk-Ulam theorem and its relatives}
\label{sec2}
\setcounter{equation}{0}

Let $X$ be a compact, connected, differentiable manifold of dimension $N$. Let $G$ be a finite group acting smoothly on $X$.
An orbit of $G$ is a finite set of the form $$ \{ g.x \, ; \, g \in G \} $$ for some $x \in X$.
We assume that the $G$-action on $X$ is free: this means that any orbit is of size exactly $\#(G)$, the cardinality of the finite group $G$. Suppose that additionally we are given an
arbitrary action of $G$ by linear isometries
on $\RR^N$. A map $F: X \rightarrow \RR^N$ is called
$G$-equivariant if
$$ F(g.x) = g.F(x)  \qquad \qquad \text{for all} \ x \in X, g \in G. $$
Note that the collection of all $G$-equivariant functions forms a vector space.

\begin{theorem} Assume that there exists a $G$-equivariant, smooth function $f: X \rightarrow \RR^N$ of which $0$ is a regular
value, such that $f^{-1}(0)$ is an orbit of $G$. Then any $G$-equivariant, continuous function $h: X \rightarrow \RR^N$ has to vanish somewhere in $X$.
\label{thm_921}
\end{theorem}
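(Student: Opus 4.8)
The plan is to argue by contradiction using a mod-$2$ cobordism count on the quotient manifold. Suppose $h : X \to \RR^N$ is $G$-equivariant, continuous, and nowhere zero. Since the $G$-action on $X$ is free and smooth, $\pi : X \to \bar X := X/G$ is a smooth principal $G$-bundle with $\bar X$ a compact connected $N$-manifold, and $G$-equivariant maps $X \to \RR^N$ correspond bijectively to sections of the flat rank-$N$ bundle $E := (X \times \RR^N)/G \to \bar X$, with zero-free maps corresponding to nowhere-vanishing sections and maps having $0$ as a regular value corresponding to sections transverse to the zero section. A continuous section of $E$ can be $C^0$-approximated by a smooth section; pulling back and using compactness of $\bar X$ together with $\inf_X |h| > 0$, I first replace $h$ by a smooth $G$-equivariant map $\tilde h : X \to \RR^N$ that is still nowhere zero.

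Next I would consider the homotopy $H : X \times [0,1] \to \RR^N$, $H(x,t) = (1-t) f(x) + t\, \tilde h(x)$; since the $G$-equivariant maps form a vector space, $H$ is $G$-equivariant for the action of $G$ on $X \times [0,1]$ through the first factor. Using a cutoff supported in $t \in (0,1)$, I would perturb $H$ within the class of smooth $G$-equivariant maps so that $0$ becomes a regular value of $H$, while keeping $H(\cdot,0) = f$ and $H(\cdot,1) = \tilde h$; this is legitimate because over the free quotient equivariant transversality is ordinary transversality for the associated section of $p^{*} E$ over $\bar X \times [0,1]$ (with $p$ the projection to $\bar X$), and this section is already transverse to the zero section near $t = 0$ and $t = 1$, since $0$ is a regular value of $f$ and $\tilde h$ is zero-free. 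Then $Z := H^{-1}(0)$ is a $G$-invariant compact $1$-dimensional submanifold-with-boundary of $X \times [0,1]$ on which $G$ acts freely, so $Z/G$ is a compact $1$-manifold with boundary $\partial(Z/G) = \big( f^{-1}(0) \times \{0\} \ \sqcup\ \tilde h^{-1}(0) \times \{1\} \big)/G$.

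Finally, $f^{-1}(0)$ is a single $G$-orbit and hence contributes exactly one point to $\partial(Z/G)$, while $\tilde h^{-1}(0) = \emptyset$ contributes nothing; thus $\partial(Z/G)$ consists of a single point. This contradicts the fact that the boundary of a compact $1$-manifold has an even number of points, so $h$ must vanish somewhere in $X$. (Equivalently, the hypothesis says the mod-$2$ Euler number of $E$ equals $1$, so $E$ admits no nowhere-vanishing section.) I expect the only genuinely technical point to be the perturbation step — making $0$ a regular value of the homotopy within $G$-equivariant maps while fixing the two endpoint maps — which rests on identifying equivariant transversality over the free quotient with ordinary transversality for sections of $E$; the equivariant smoothing of $h$ and the parity bookkeeping are routine.
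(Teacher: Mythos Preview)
Your argument is correct and is a genuinely different route from the paper's. The paper also argues by contradiction and builds an equivariant homotopy $\vphi:[0,1]\times X\to\RR^N$ from (a perturbation of) $f$ to a nonvanishing smoothing of $h$, with $0$ a regular value, so that $Z=\vphi^{-1}(0)$ is a compact $1$-manifold with boundary; but then, rather than passing to $X/G$ and invoking the parity of $\partial(Z/G)$, it works upstairs: it picks a connected component $J$ of $Z$, shows its two boundary points lie in $\{0\}\times f^{-1}(0)$ and are related by some $g\neq e$, observes that $g$ therefore maps $J$ to itself swapping its endpoints, and uses the intermediate value theorem to produce a fixed point of $g$ on $J$, contradicting freeness. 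Your quotient-and-parity argument is the cleaner conceptual packaging (it is exactly the statement that the mod-$2$ Euler number of $E$ is $1$), and it handles the equivariant transversality step very naturally by reducing it to ordinary transversality for sections of $E$ over $\bar X\times[0,1]$. The paper's approach trades that abstraction for a completely elementary endgame (no quotient manifold, only the IVT) and makes the transversality step explicit via a randomized bump-function perturbation; your treatment of that step is fine, though it is worth remarking that to get $Z$ as a manifold with boundary you also use that $0$ is a regular value of the restrictions $H(\cdot,0)=f$ and $H(\cdot,1)=\tilde h$, which holds since $0$ is regular for $f$ by hypothesis and not in the image of $\tilde h$.
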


\begin{proof} Assume by contradiction that $h$ never vanishes. Set $\eps = \inf |h| > 0$.
We claim that there exists a function $\vphi: [0,1] \times X \rightarrow \RR^N$
with the following properties:
\begin{enumerate}
\item[(i)] The function $\vphi$ is smooth, and $0$ is a regular value of $\vphi$.
\item[(ii)] For any $t \in [0,1]$, the function $\vphi_t(x) = \vphi(t, x)$ is $G$-equivariant.
\item[(iii)] For any $x \in X$, we have $|\vphi_1(x) - h(x)| < \eps$.
\item[(iv)] The set $\vphi_0^{-1}(0)$ is an orbit of $G$,
and $0$ is a regular value of $\vphi_0$.
\end{enumerate}
In fact, the function $(t,x) \mapsto (1 - t) f(x) + t h(x) $ already satisfies properties (ii), (iii) and (iv).
Below we
explain how to modify the latter function slightly so that property (i) would hold true as well.
Assuming for now that such a function $\vphi$ exists,
we define
$$ Z = \vphi^{-1}(0) \subseteq [0,1] \times X. $$
Thanks to property (i) and the implicit function theorem, the set $Z$
is a compact, one-dimensional, smooth manifold with boundary points, and these boundary points are contained in  $\{ 0, 1 \} \times X$.
Therefore any connected component of $Z$ is diffeomorphic either to a circle or to the interval $[0,1]$ (see, e.g., \cite{milnor}  for the classification of $1$-dimensional manifolds with boundary).
Denote $$ Z_0 = \vphi_0^{-1}(0) \subseteq X. $$ Then $\{ 0 \} \times Z_0 \subseteq Z$.
Pick a connected component $J$ of $Z$ that intersects $\{ 0 \} \times Z_0$.
Since $0$ is a regular value of
$\vphi_0$, at any point $p \in \{ 0 \} \times Z_0$ the tangent line to $Z$
at $p$ is transversal to the slice $\{ 0 \} \times X$. We conclude that  $J$ is homeomorphic to a closed interval
and that $J$ cannot intersect $\{ 0 \} \times Z_0$ at more than two points.
Let $\gamma: [0,1] \rightarrow J$ be a homeomorphism. The endpoints of $J$ lie in $\{ 0,1 \} \times X$, but the function $\vphi$ does not vanish on $\{ 1 \} \times X$, hence
\begin{equation} J \cap [ \{ 0 \}  \times  Z_0] = \{ \gamma(0), \gamma(1) \} \qquad \text{with} \ \gamma(0) \neq \gamma(1). \label{eq_853} \end{equation}
Write $\gamma(t) = (a(t), x(t)) \in [0,1] \times X$.
The set $Z_0$ is an orbit of $G$. By (\ref{eq_853}), there exists $g \in G$ which is not the identity element such that
$$ g.x(0) = x(1). $$ Since $\vphi_t$ is $G$-equivariant, necessarily $\tilde{\gamma}(t) = (a(t), g.x(t)) \in Z$
for all $t \in [0,1]$. The continuous curve $\tilde{\gamma}: [0,1] \rightarrow Z$ is one-to-one,
and $\tilde{\gamma}(0), \tilde{\gamma}(1) \in \{ 0 \} \times Z_0$ with $\tilde{\gamma}(0) = \gamma(1) \in J$.
By using (\ref{eq_853}), we conclude that  $\tilde{\gamma}: [0,1] \rightarrow J$
is a homeomorphism with  $\tilde{\gamma}(i) = \gamma(1-i)$ for $i=0,1$. In particular, the map $$ \gamma^{-1} \circ \tilde{\gamma}: [0,1] \rightarrow [0,1] $$ is a homeomorphism of $[0,1]$
that switches the points zero and one. By the mean value theorem there exists $t_0 \in [0,1]$ such that $\gamma^{-1}(\tilde{\gamma}(t_0)) = t_0$.
Therefore $\gamma(t_0) = \tilde{\gamma}(t_0)$ and
\begin{equation} g.x(t_0) = x(t_0) \qquad \text{while} \qquad x(t_0) \in X. \label{eq_1006} \end{equation}
On the other hand, $g \in G$ is not the identity element
and the $G$-action is free on $X$, in contradiction to (\ref{eq_1006}).
\end{proof}

\begin{proof}[Proof of the existence of a function $\vphi$ satisfying properties (i),...,(iv)]
A standard argument based on a smooth partition of unity shows the existence of a
smooth function $\bar{h}: X \rightarrow \RR$ with $\sup_X |h - \bar{h}| < \eps/2$ (see, e.g., \cite[Theorem 2.2]{H}). Since $h$ is $G$-equivariant, the smooth
function
$$ \tilde{h}(x) = \frac{1}{\#(G)} \sum_{g \in G}  g^{-1}.\bar{h}(g.x) $$
is $G$-equivariant and it satisfies $\sup |h - \tilde{h}| < \eps/2$. Next, for any $x \in X$ let us select two open neighborhoods $V_x, U_x \subseteq X$ with $\overline{V_x} \subseteq U_x$ such that
$g.y \not \in U_x$ for any $y \in U_x$ and for any $g \in G$ which is not the identity element.
By compactness, the open cover $\{ V_x \}_{x \in X}$
admits a finite subcover $$ V_{x_1},\ldots,V_{x_L} \subseteq X. $$
The function $f: X \rightarrow \RR^N$ has $0$ as a regular value, which it attains
at precisely $\#(G)$ points. Hence there exists a neighborhood of the function $f$ in the $C^1$-topology that consists of functions having $0$ as a regular
value, which they attain at precisely $\#(G)$ distinct points of $X$.

\medskip Let $\theta_i: X \rightarrow [0,1]$ be a smooth function
that equals one on $V_{x_i}$ and is supported on $U_{x_i}$.
Write $B^n = \{ x \in \RR^n \, ; \, |x| \leq 1 \}$.
Then for a sufficiently small $\delta > 0$,
for all choices of $\xi_1,\ldots,\xi_L \in B^n$, the following holds: The function
$$ \tilde{f}(x) = f(x) + \delta \sum_{g \in G}  \sum_{i=1}^L \theta_i(g.x) g^{-1}.\xi_i $$
is a smooth, $G$-equivariant function on $X$, such that
$0$ is a regular value of $\tilde{f}$ which is attained at exactly $\#(G)$ points.
Hence $\tilde{f}^{-1}(0)$ is an orbit of $G$.
By decreasing $\delta$ if necessary, we may assume that $\delta \cdot \#(G) \cdot L < \eps/2$. Now let $\xi_1,\ldots,\xi_L$ be independent random vectors,
distributed uniformly on $B^n$. Let us define for $t \in [0,1]$ and $x \in X$,
$$ \vphi(t,x) = (1 - t) f(x) + t \tilde{h}(x) + \delta \sum_{g \in G}  \sum_{i=1}^L \theta_i(g.x) g^{-1}.\xi_i. $$
With probability one of selecting $\xi_1,\ldots,\xi_L$, the smooth function $\vphi$ satisfies properties (ii), (iii) and (iv).
It remains to verify property (i). It suffices to show that for any fixed $i$,
with probability one, the value $0$ is a regular value of $\vphi$ in $[0,1] \times V_{x_i}$. Observe that in the set $[0,1] \times V_{x_i}$ we may decompose
\begin{equation}  \vphi = \vphi_i + \delta \xi_i, \label{eq_1626} \end{equation}
where $\vphi_i(t,x)$ is some smooth function stochastically independent of $\xi_i$. Since $\xi_i$ is uniformly distributed in $B^n$,
Sard's theorem and the representation (\ref{eq_1626}) imply that $0$ is a regular value of $\vphi|_{[0,1] \times V_{x_i}}$, with probability one
of selecting $\xi_i$. We have thus shown that (i) holds true with probability one.
\end{proof}

Theorem~\ref{thm_921} is now  proven. The above proof of Theorem~\ref{thm_921} is modeled on the homotopy proof
of the Borsuk-Ulam theorem which may be found in Matousek \cite[Section 2.2]{Mat}.

\section{The convex localization method}
\label{sec3}
\setcounter{equation}{0}

For $u \in S^n \subseteq \RR^{n+1}$ define
$$ H(u) = \left \{ x \in \RR^n \, ; \, u_{n+1} + \sum_{i=1}^n u_i x_i \geq 0 \right \}. \label{eq_220`}
$$
The set $H(u) \subseteq \RR^n$
is usually a closed halfspace, yet when $u = \pm (0,\ldots,0,1)$ it is either the empty set or the whole of $\RR^n$.
An $\ell$-dimensional subsphere in $S^n$ is the intersection
of $S^n$ with an $(\ell+1)$-dimensional linear subspace in $\RR^{n+1}$.
For example, given an affine subspace $E \subseteq \RR^n$ of dimension $n-\ell-1$, the set
$$ \left \{ u \in S^n \, ; \, E \subseteq \partial H(u) \right \}
=\left \{ u \in S^n \, ; \, \forall x \in E, \, u_{n+1} + \sum_{i=1}^n u_i x_i = 0 \right \} $$
is an $\ell$-dimensional subsphere in $S^n$.

 \medskip Let $S_1,\ldots,S_N \subseteq S^n$ be subspheres. Write $\cK_{S_1,\ldots,S_N}$
for the collection of all closed, convex sets in $\RR^n$ of the form
$$ H(u_1) \cap H(u_2) \cap \ldots \cap H(u_N) \qquad \qquad (u_1 \in S_1,\ldots,u_N \in S_N). $$
We endow $\cK_{S_1,\ldots,S_N}$ with the quotient topology from $S_1 \times S_2 \times \ldots \times S_N$.
Thus, a function $\cI$ on $\cK_{S_1, \ldots,S_N}$ is continuous
if the expression
$$ \cI \left( H(u_1) \cap H(u_2) \cap \ldots \cap H(u_N)  \right) $$
depends continuously on $u_1 \in S_1,\ldots,u_N \in S_N$. The collection of all
$K \in \cK_{S_1,\ldots,S_N}$ with a non-empty interior is an open subset of $\cK_{S_1,\ldots,S_N}$, and in this open subset the topology
is metrizable.

\medskip
We say that the sets $A_1,\ldots,A_T$ form a partition
of a measure space $\Omega$ up to measure zero, if $A_1 \cap F, \ldots, A_T \cap F$ are disjoint sets whose union is $F$ for some set $F \subseteq \Omega$ of full measure.
The convex partition theorem of Gromov \cite{Gr} is the following result:

\begin{theorem} Let $N \geq 1, 1 \leq \ell \leq n$ and let
 $S_1,\ldots,S_N \subseteq S^n$ be $\ell$-dimensional subspheres.
Assume that $\cI: \cK = \cK_{S_1,\ldots,S_N} \rightarrow \RR^\ell$ is a continuous functional.
 Then there exist convex sets
$K_1,\ldots,K_{2^N} \in \cK$ which form a partition of $\RR^n$ up to Lebesgue measure zero, such that
$ \cI(K_1) = \cI(K_2) = \ldots = \cI(K_{2^N})$. \label{thm2}
\end{theorem}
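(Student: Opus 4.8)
\medskip
The plan is to realize the sought partition as a balanced binary chopping tree (Gromov's ``pancake'' bisection) and to feed it into the equivariant topology of Theorem~\ref{thm_921}, arguing by induction on $N$. At the $N$-th stage one considers a rooted binary tree of depth $N$; each internal node $v$ at depth $k$ (with $0 \le k \le N-1$, so $2^k$ such nodes) is equipped with a cut direction $u_v$ ranging over the $\ell$-subsphere $S_{N-k}$, and to a leaf $\sigma \in \{0,1\}^N$ one attaches the convex set $K_\sigma = \bigcap_v H(\pm u_v)$, the intersection being over the $N$ internal nodes $v$ on the root-to-$\sigma$ path, with sign $+$ when $\sigma$ turns left at $v$ and $-$ otherwise. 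Along each path one selects exactly one halfspace out of each of $S_1,\ldots,S_N$, so $K_\sigma \in \cK$; and since $C = (C \cap H(u_v)) \cup (C \cap H(-u_v))$ up to measure zero at every node, the $2^N$ sets $K_\sigma$ always form a partition of $\RR^n$ up to Lebesgue measure zero (the degenerate directions $u_v = \pm e_{n+1}$ only create empty pieces and do no harm). The parameter space is $P_N = \prod_v S_{N-d(v)}$, a compact connected manifold of dimension $\ell(2^N-1)$, carrying a smooth action of the iterated wreath product $G_N = \ZZ_2 \wr \cdots \wr \ZZ_2$ ($N$ factors): the local flip at an internal node interchanges the two subtrees below it (permuting their parameters) and negates that node's own direction. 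This action is free, by induction: a nontrivial element either sends some $u_v$ to $-u_v$, or already acts nontrivially inside a subtree.

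Next I encode $\cI$. Let $V = \bigoplus_{\sigma} \RR^\ell$, with $G_N$ permuting the $2^N$ summands exactly as it permutes leaves; this fixes the diagonal $\Delta \cong \RR^\ell$, hence acts by linear isometries on $W_N := \Delta^{\perp}$, a representation of dimension $\ell(2^N-1) = \dim P_N$. The map $\Phi \colon P_N \to V$, $\Phi(p) = (\cI(K_\sigma(p)))_\sigma$, is continuous (each $K_\sigma(p)$ is an intersection of $N$ halfspaces whose normals depend continuously on $p$, so this is just continuity of $\cI$ on $\cK$) and $G_N$-equivariant; therefore $h := \pi_{W_N} \circ \Phi \colon P_N \to W_N$ is continuous and $G_N$-equivariant. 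A zero of $h$ is a choice of parameters with $\Phi(p) \in \Delta$, i.e. with all the values $\cI(K_\sigma(p))$ equal — which is exactly the desired conclusion. So it suffices to produce a zero of $h$.

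For that I will verify the hypothesis of Theorem~\ref{thm_921}: the existence of a smooth $G_N$-equivariant $f \colon P_N \to W_N$ with $0$ a regular value and $f^{-1}(0)$ a single $G_N$-orbit. I build $f = f_N$ recursively, using the splitting $P_N = S_N \times P_{N-1} \times P_{N-1}$ together with the compatible splitting $W_N \cong W_{N-1} \oplus W_{N-1} \oplus \RR^\ell$, where the last summand is the difference of the leaf-averages over the two halves and the root flip acts on $W_N$ by swapping the first two summands and negating the third. Identify each $S_i$ linearly with $S^\ell \subseteq \RR^{\ell+1}$ and let $q_i \colon S_i \to \RR^\ell$ be the ``first $\ell$ coordinates'' map — an odd map whose zero set is the pair of poles $\{\pm e_{\ell+1}\}$ and for which $0$ is a regular value. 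Put $f_1 = q_1$ and $f_N(u,p_1,p_2) = (f_{N-1}(p_1),\, f_{N-1}(p_2),\, q_N(u))$. Equivariance is immediate from the recursive description of the $G_N$-action; $df_N$ is block diagonal, so $0$ remains a regular value by induction and by regularity of $q_N$ at the poles; and $f_N^{-1}(0) = \{\pm e_{\ell+1}\} \times f_{N-1}^{-1}(0) \times f_{N-1}^{-1}(0)$, which by a cardinality count ($2(\#G_{N-1})^2 = \#G_N$) together with transitivity of $G_N$ on it is a single orbit. The base case $N=1$ is the classical Borsuk--Ulam situation: $P_1 = S^\ell$, $G_1 = \ZZ_2$ acting antipodally, $f_1 = q_1$, and $h(u) \simeq \cI(H(u)) - \cI(H(-u))$.

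Granting the reference map, Theorem~\ref{thm_921} forces $h$ to vanish somewhere, and relabelling the resulting sets $K_\sigma$ as $K_1,\ldots,K_{2^N}$ completes the proof. The one genuinely delicate point is arranging all three properties of $f$ — equivariance, $0$ a regular value, and $f^{-1}(0)$ a \emph{single} $G_N$-orbit — to hold simultaneously across the induction; the recursive ``first $\ell$ coordinates'' construction, paired with the matching decomposition of $W_N$ as a $G_N$-representation, is engineered precisely so that these three conditions propagate from $N-1$ to $N$. Everything else — continuity and equivariance of $\Phi$, freeness of the $G_N$-action, and the bookkeeping that the $K_\sigma$ tile $\RR^n$ up to measure zero — is routine once the tree, the group, and the target representation are in place.
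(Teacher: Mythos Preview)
Your proof is correct and is essentially the same as the paper's: both set up a binary-tree parameter space diffeomorphic to $(S^\ell)^{2^N-1}$ with the free action of the iterated wreath product of $\ZZ_2$'s (the paper describes this group via the generating involutions $i_W$), define a $G$-equivariant test map whose vanishing encodes $\cI(K_1)=\cdots=\cI(K_{2^N})$, and construct the witness map for Theorem~\ref{thm_921} coordinatewise via ``first $\ell$ coordinates'' / projection onto a generic $\ell$-plane. The only cosmetic differences are packaging --- you index the target by leaves modulo the diagonal and argue recursively, whereas the paper indexes it by internal nodes via the differences $F_V=\sum_{i\in V_L}\cI(K_i)-\sum_{i\in V_R}\cI(K_i)$ and writes everything out at once; these are isomorphic $G$-representations (Haar-wavelet change of basis), and the two witness constructions coincide.
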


\begin{proof}
Gromov's proof relies on the idea of successive bisections, which in this context
goes back to Payne and Weinberger \cite{PW}, Gromov and Milman \cite{GM} and Kannan, Lov\'asz and Simonovits \cite{KLS, LS}.
Write $\Omega = \{ 1,\ldots, 2^{N} \}$. A subset $V \subseteq \Omega$ is {\it dyadic} if
there exist integers $i,a \geq 0$ such that $$ V = \{ 2^i a + 1, 2^i a + 2,\ldots, 2^i (a+1) \}. $$
The collection of all dyadic
subsets $V \subseteq \Omega$ is denoted by $\hat{\cT}$, and it forms a binary tree under inclusion.
We say that a node $V \in \hat{\cT}$ is of height $h$ if
$$ \#(V) = 2^{h-1}. $$
Thus the singletons are the nodes of height $1$, which are the leaves of the tree.
In addition to leaves, there are $2^N - 1$ internal nodes  in the tree, whose height is at least $2$.
Write $\cT$ for the collection of all internal nodes.
Any  node $V \in \cT$ has two children. The child of $V$ which contains the element $\min V$
is denoted by $\VL$, and the other child is denoted by $\VR$.
 We define $X$ to be the space of all maps $ x: \hat{\cT} \setminus \{ \Omega \} \rightarrow S^n $
such that
\begin{enumerate}
\item[(i)] $\displaystyle x(\VL) + x(\VR) = 0$ for every $V \in \cT$.
\item[(ii)] $\displaystyle x(V) \in S_h$ for any node $V \in \hat{\cT} \setminus \{ \Omega \}$ whose height is $h$.
\end{enumerate}
The space $X$ is a differentiable manifold, diffeomorphic to $(S^\ell)^{2^N - 1}$.
Indeed, this follows from the fact that any map $x \in X$ is determined by its values on the set $\{ \VL \, ; \, V \in \cT \}$,
and these values can be arbitrary as long as (ii) is satisfied. For $x \in X$ and $i=1,\ldots,2^N$ we denote
$$ K_i(x) = \bigcap_{i \in V} H( x(V) ) \in \cK_{S_1,\ldots,S_N}, $$
where the intersection is over all nodes $V \in \hat{\cT} \setminus \{ \Omega \}$ containing $i$.
A moment of contemplation reveals that the family of convex sets
$ K_1(x),\ldots,K_{2^N}(x) $
forms a partition of $\RR^n$, up to Lebesgue measure zero. For $V \in \cT$ and $x \in X$  set
\begin{equation}  F_V(x) = \sum_{i \in \VL} \cI( K_i(x) ) - \sum_{i \in \VR} \cI( K_i(x) ) \in \RR^\ell. \label{eq_1636}
\end{equation}
Abbreviating $F(x) = (F_V(x))_{V \in \cT}$ we obtain a continuous map $F: X \rightarrow (\RR^\ell)^{\cT} \cong (\RR^\ell)^{2^N - 1}$.
Thus $F$ is a continuous map from the smooth manifold $X$ to the space $\RR^{\dim(X)}$. In order to conclude the proof of the theorem
we need to show that
\begin{equation}
\exists x \in X\, \ \ F(x) = 0.
\label{eq_1530}
\end{equation}
Indeed, once (\ref{eq_1530}) is proven, we obtain $\sum_{i \in \VL} \cI( K_i(x) ) = \sum_{i \in \VR} \cI( K_i(x) )$ for all $V \in \cT$, and consequently,
$$ \cI(K_1(x)) = \cI(K_2(x)) = \ldots = \cI(K_{2^N}(x)). $$
For the proof of (\ref{eq_1530}) we will consider a certain group action.
With any node $W \in \cT$  of height $h$ we associate
an involution $i_W: \Omega \rightarrow \Omega$ given by
\begin{equation}  i_W(j) =  \left \{ \begin{array}{ll} j & j \not \in W \\
j + 2^{h-2} & j \in \WL \\
j - 2^{h-2} & j \in \WR \\ \end{array} \right. \label{eq_1640} \end{equation}
Note that $i_W(V) \in \hat{\cT}$ for any $V \in \hat{\cT}$.
We may therefore view $i_W$ from now on as a map  $i_W: \hat{\cT} \rightarrow \hat{\cT}$.
This map  $i_W$ switches between the two subtrees of the vertex $W$,
leaving the rest of the tree intact.
Write $G$ for the group generated by all of the involutions $i_W$ for $W \in \cT$.
Observe that any $g \in G$ which is not the identity element may be written as
\begin{equation} g = i_{W_1} i_{W_2} \ldots i_{W_m} \label{eq_1648} \end{equation}
with $m \geq 1$ while $W_p \neq W_q$ and $h(W_p) \leq h(W_q)$ for all $p < q$. Here, $h(W_p)$ is the height of $W_p$.
With the representation (\ref{eq_1648}) we have that $g(W_m) = g^{-1}(W_m) = W_m$
 and $g^{-1}(W_p) = i_{W_m} i_{W_{m-1}} \ldots i_{W_{p+1}}(W_p)$ for $p <m$.
We see that  $G$ is a group of $2^{\#(\cT)}$ elements. The group $G$ acts on $X$ by permuting coordinates. That is, we define
\begin{equation}  (g.x)(V) = x(g^{-1}(V)) \qquad \qquad (g \in G, x \in X, V \in \hat{\cT} \setminus \{ \Omega \}). \label{eq_1658} \end{equation}
This action is free: Let $g \in G$ be as in (\ref{eq_1648}) and abbreviate $W = W_{m}$. Then $(g.x)(\WL) = x(g^{-1}(\WL)) = x(\WR) = -x(\WL)$ for any $x \in X$.
Hence $g.x \neq x $ for all $x \in X$ and  $g$ has no fixed points. According to (\ref{eq_1636}), (\ref{eq_1640}) and (\ref{eq_1658}),
\begin{equation}
   F_V(i_W.x) = \left \{ \begin{array}{ll} F_{i_W(V)}(x) & V \neq W \\ -F_V(x)  & V = W
\end{array} \right. \label{eq_1704}
\end{equation}
We move on to describing the $G$-action on $(\RR^\ell)^{\cT}$.
Given $y = (y_W)_{W \in \cT} \in (\RR^\ell)^{\cT}, V \in \cT$ and $g \in G$ of the form (\ref{eq_1648}) we set
\begin{equation} (g.y)_V = \left \{
\begin{array}{ll} y_{g^{-1}(V)} & V \not \in \{ W_1,\ldots,W_m \} \\ -y_{g^{-1}(V)} & V \in \{ W_1,\ldots,W_m \} \end{array} \right. \label{eq_1701_}
\end{equation}
In other words, we permute the coordinates according to the transformation $g^{-1} \in G$, and switch the signs
of the coordinates corresponding to the involutions that $g^{-1}$ applies. This is indeed a well-defined action of $G$ on $(\RR^\ell)^{\cT}$ by linear isometries,
as may be verified routinely.
The $G$-equivariance of the function $F$ now follows from (\ref{eq_1704}) and (\ref{eq_1701_}).

\medskip In order to apply Theorem~\ref{thm_921} and deduce the desired vanishing (\ref{eq_1530}), we
need to present a certain witness: A smooth, $G$-equivariant function
$$ f: X \rightarrow (\RR^\ell)^{\cT} $$
of which $0$ is a regular value, such that $f^{-1}(0)$ is an orbit of $G$. Fix an $\ell$-dimensional subspace $E \subseteq \RR^{n+1}$
which is {\it generic} with respect to each of the $N$ subspaces of dimension $\ell+1$ spanned by the  subspheres $S_1,\ldots,S_N \subseteq S^n$.
This means that the orthogonal complement $E^{\perp}$ intersects each $\ell$-dimensional subsphere $S_i$ at exactly two antipodal points. Let us now define
$$ f_V(x) = Proj_E \left( x(\VL) \right) \qquad \qquad \qquad (x \in X, V \in \cT) $$
where $Proj_E$ is the orthogonal projection operator onto $E$ in $\RR^{n+1}$. Setting $f(x) = (f_V(x))_{V \in \cT}$
we obtain a smooth function from $X$ to $E^{\cT} \cong (\RR^\ell)^{\cT}$. This function $f$ is $G$-equivariant, as
$$ f_V(i_W.x) = \left \{ \begin{array}{ll} f_{i_W(V)}(x) & V \neq W \\ -f_V(x) & V = W
\end{array} \right. $$
The function $f$ vanishes at exactly $2^{\#(\cT)}$ points, since
$$ \# \{ x \in S_i \, ; \, Proj_E(x) = 0 \} = 2 \qquad \text{for} \ i=1,\ldots,N. $$
The tangent spaces to $S_i \subseteq \RR^n$ at these vanishing points of $Proj_E$ are necessarily transversal to $E^{\perp}$, and
consequently all points of $f^{-1}(0)$ are regular points of $f$.
Since $f^{-1}(0)$ has the cardinality of $G$, this zero set $f^{-1}(0)$ of the $G$-equivariant function $f$  must  be an orbit of $G$.
We have thus constructed a function $f$ as required in Theorem~\ref{thm_921}. Therefore  the application
of the latter theorem is legitimate, and the proof of (\ref{eq_1530}) is complete.
\end{proof}

Similarly to Memarian \cite{Mem}, we say that a convex subset $P \subseteq \RR^n$
is an {\it $(\ell, \delta)$-pancake}, for $\ell=0,\ldots,n-1$ and $\delta > 0$, if there exists an affine $\ell$-dimensional subspace $E \subseteq \RR^n$ such that
$$ P \subseteq E + \delta B^n. $$
The following proposition shows that we can make pancakes out
of the convex sets in Theorem~\ref{thm2}.

\begin{proposition} Let $0 \leq \ell \leq n-1$ and let $R, \delta > 0$. Then there exist $N \geq 1$ and $\ell$-dimensional subspheres $S_1,\ldots,S_N \subseteq S^n$
such that for any $P \in \cK_{S_1,\ldots,S_N}$ whose interior intersects $R B^n$, the convex set $P \cap R B^n$ is an $(\ell, \delta)$-pancake. \label{prop_pancake}
\end{proposition}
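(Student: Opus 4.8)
The plan is to choose the subspheres so as to forbid, for every cell $P$ with $\mathrm{int}(P)\cap RB^n\neq\emptyset$, the presence of a ``fat'' $(\ell+1)$-dimensional simplex inside $P\cap RB^n$. Recall that, by the correspondence described just before the proposition, an $\ell$-dimensional subsphere $S\subseteq S^n$ is the same datum as an $(n-\ell-1)$-dimensional affine subspace $E\subseteq\RR^n$: one takes $S=\{u\in S^n\,;\,E\subseteq\partial H(u)\}$, and then $\{\partial H(u)\,;\,u\in S\}$ is exactly the pencil of all hyperplanes of $\RR^n$ containing $E$ (in particular $\pm(0,\dots,0,1)\notin S$, so $H(u)$ is always a genuine halfspace with $\partial H(u)\supseteq E$). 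So it suffices to exhibit finitely many $(n-\ell-1)$-dimensional affine subspaces $E_1,\dots,E_N\subseteq\RR^n$ such that whenever $P=\bigcap_i H(u_i)$ with $E_i\subseteq\partial H(u_i)$ and $\mathrm{int}(P)\cap RB^n\neq\emptyset$, the set $P\cap RB^n$ is an $(\ell,\delta)$-pancake. The device for this is the following elementary \emph{blocking} observation: if $\Delta\subseteq\RR^n$ is an $(\ell+1)$-dimensional simplex, if an $(n-\ell-1)$-dimensional affine subspace $E$ meets the relative interior of $\Delta$, and if the linear subspace parallel to $E$ is complementary in $\RR^n$ to the one parallel to $\mathrm{aff}(\Delta)$, then no halfspace $H$ with $\partial H\supseteq E$ contains $\Delta$. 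Indeed any hyperplane $\partial H\supseteq E$ then cannot contain $\mathrm{aff}(\Delta)$ (otherwise $\mathrm{aff}(E\cup\Delta)\subsetneq\RR^n$), so the affine functional cutting out $\partial H$ is nonconstant on $\mathrm{aff}(\Delta)$, and moving from a point of $E\cap\mathrm{relint}(\Delta)$ a little within $\mathrm{aff}(\Delta)$ along $\pm$ the gradient of that functional stays in $\mathrm{relint}(\Delta)$ and attains values of both signs.

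Now suppose $P\cap RB^n$ is \emph{not} an $(\ell,\delta)$-pancake. Choosing greedily $p_0\in P\cap RB^n$ and then $p_{k+1}\in P\cap RB^n$ with $\mathrm{dist}(p_{k+1},\mathrm{aff}(p_0,\dots,p_k))>\delta$ — possible for $k=0,\dots,\ell$, since otherwise $P\cap RB^n$ would lie within distance $\delta$ of the at most $\ell$-dimensional flat $\mathrm{aff}(p_0,\dots,p_k)$, hence of an $\ell$-plane — produces a simplex $\Delta=\mathrm{conv}(p_0,\dots,p_{\ell+1})\subseteq P\cap RB^n$ with $Vol_{\ell+1}(\Delta)\geq\delta^{\ell+1}/(\ell+1)!$ and diameter at most $2R$; comparing this volume with the surface area of $\Delta$ yields a relative ball $B_{\mathrm{aff}(\Delta)}(q^*,r_0)\subseteq\mathrm{relint}(\Delta)$ with $q^*\in RB^n$ and an explicit $r_0=r_0(\delta,\ell,R)>0$. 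I would then take $\mathcal Z$ to be a $\rho$-net of $RB^n$ with $\rho=r_0/4$ and $\mathcal W$ a finite $\eta$-net, for a small absolute constant $\eta$, of the Grassmannian of $(n-\ell-1)$-dimensional linear subspaces of $\RR^n$, and set $\{E_1,\dots,E_N\}=\{z+W\,;\,z\in\mathcal Z,\ W\in\mathcal W\}$, so that $N=\#\mathcal Z\cdot\#\mathcal W<\infty$. Given $\Delta$ as above with $L$ the linear subspace parallel to $\mathrm{aff}(\Delta)$, pick $W\in\mathcal W$ within $\eta$ of $L^\perp$; since $L^\perp$ is the orthogonal complement of $L$, for $\eta$ small enough (absolutely) $W$ is complementary to $L$ and the projection $\Phi$ of $\RR^n$ onto $\mathrm{aff}(\Delta)$ with fibers $x+W$ is $2$-Lipschitz, uniformly in $L$. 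As $\Phi(q^*)=q^*$ and $\mathcal Z$ contains some $z$ with $|z-q^*|\leq\rho$, we get $\Phi(z)\in B_{\mathrm{aff}(\Delta)}(q^*,r_0)\subseteq\mathrm{relint}(\Delta)$, so $E_i=z+W$ meets $\mathrm{relint}(\Delta)$ and is parallel to a complement of $L$. By the blocking observation no halfspace whose boundary contains $E_i$ contains $\Delta$; but $P\subseteq H(u_i)$ with $\partial H(u_i)\supseteq E_i$, so $\Delta\not\subseteq P$, contradicting $\Delta\subseteq P\cap RB^n$. Hence $P\cap RB^n$ is an $(\ell,\delta)$-pancake, which is the claim.

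The crux is the quantitative design of the two nets together with the sharp form of the blocking step. Mere incidence $E\cap\mathrm{relint}(\Delta)\neq\emptyset$ is not enough — a skew $E$ can leave a small $\Delta$ entirely on one side of some hyperplane through it — which is why complementarity of the parallel subspaces is built in; this complementarity is also what makes an \emph{absolute}-scale net of the Grassmannian sufficient, because $L^\perp$ is the ``most transverse'' complement of $L$, giving a Lipschitz bound on $\Phi$ independent of $L$. The spatial net, in contrast, must be refined down to the inradius scale of a fat simplex, so the step requiring genuine care is the lower bound $r_0(\delta,\ell,R)$ on the inradius of an $(\ell+1)$-simplex of $(\ell+1)$-volume at least $\delta^{\ell+1}/(\ell+1)!$ and diameter at most $2R$. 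The extreme cases $\ell=0$ (cells of diameter $\leq\delta$, i.e.\ the classical needle estimate) and $\ell=n-1$ ($E_i$ a single point, $S_i$ a full $(n-1)$-sphere) are subsumed automatically by the same argument.
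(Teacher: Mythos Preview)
Your argument is correct, but it follows a genuinely different route from the paper's.

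The paper's proof is considerably shorter and less quantitative. It fixes a dense sequence $E_1,E_2,\ldots$ of $(n-\ell-1)$-dimensional affine subspaces, and uses compactness of the space of $(\ell+1)$-dimensional balls of radius $\delta/n$ in $RB^n$ to find $N$ such that every such ball meets $\bigcup_{i\le N} E_i$. The punchline is then purely topological: since $\mathrm{int}(P)$ is disjoint from each hyperplane $\partial H(u_i)\supseteq E_i$, the interior of $P\cap RB^n$ contains no $(\ell+1)$-ball of radius $\delta/n$; a one-line John-ellipsoid argument then forces $\lambda_{\ell+1}\le\delta/n$ and hence $P\cap RB^n\subseteq F+\delta B^n$ for the $\ell$-flat $F$ spanned by the long axes. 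No transversality, no Grassmannian net, no simplex is needed: mere incidence of an $E_i$ with the interior is already a contradiction, because the interior of a halfspace is disjoint from its boundary.

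Your approach instead builds an explicit fat $(\ell+1)$-simplex, uses a two-scale net (a $\rho$-net of $RB^n$ tuned to the simplex inradius, together with an absolute-scale net of the Grassmannian $G_{n,n-\ell-1}$), and invokes the blocking observation that a transverse $(n-\ell-1)$-flat through the relative interior of the simplex cannot leave it on one side of any containing hyperplane. This is longer, but it is fully constructive: it gives an explicit upper bound on $N$ in terms of $n,\ell,R,\delta$, whereas the paper's compactness step is nonconstructive. Your care with complementarity is necessary precisely because your flats are prescribed in advance rather than merely required to exist in the interior; the paper sidesteps this entirely by working with the open interior of $P$ and the John ellipsoid.
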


\begin{proof}  Let $E_1,E_2,\ldots$ be a dense sequence in the space of all affine $(n-\ell-1)$-dimensional subspaces in $\RR^n$.
For any $(\ell+1)$-dimensional  ball $D \subseteq R B^n$ of radius $\delta/(2n)$ there exists $i$ such that $E_i \cap D \neq \emptyset$. A compactness
argument shows that there exists $N \geq 1$ such that for any $(\ell+1)$-dimensional closed ball $D \subseteq R B^n$ of radius $\delta/n$,
\begin{equation} D \cap \left( \bigcup_{i=1}^N E_i \right) \neq \emptyset. \label{eq_1216} \end{equation}
For $i=1,\ldots,N$ set $$ S_i = \left \{ u \in S^n \, ; \, E_i \subseteq \partial H(u) \right \}
=\left \{ u \in S^n \, ; \, \forall x \in E_i, \, u_{n+1} + \sum_{j=1}^n u_j x_j = 0 \right \}. $$
By linear algebra, the set $S_i$ is an  $\ell$-dimensional subsphere in $S^n$. Now let
$P \in \cK_{S_1,\ldots,S_N}$ be an arbitrary convex set whose interior intersects $R B^n$. We need to show that $P \cap R B^n$ is an $(\ell, \delta)$-pancake.
By the definition of $\cK_{S_1,\ldots,S_N}$, there exist $u_1 \in S_1,\ldots,u_N \in S_N$ such that
the interior of $P$ is disjoint from
\begin{equation} \bigcup_{i=1}^N \partial H(u_i).
\label{eq_1458} \end{equation}
However, the set in (\ref{eq_1458}) contains $\bigcup_{i=1}^N E_i$, and therefore the interior of $P$ is disjoint from $\bigcup_{i=1}^N E_i$. We thus learn from (\ref{eq_1216})
that the interior of
$P \cap R B^n$ cannot contain any closed $(\ell+1)$-dimensional ball of radius $\delta / n$. Now let $\cE$ be the John ellipsoid of $P \cap R B^n$, which
is the unique closed ellipsoid of maximal volume that is contained in $P \cap R B^n$ (see, e.g., \cite{ball})). A virtue of the John ellipsoid is that
\begin{equation}  P \cap R B^n \subseteq x_0 + n (\cE - x_0) = \{ x_0 + n (x - x_0) \, ; \, x \in \cE \} \label{eq_1214} \end{equation}
where $x_0$ is the center of the ellipsoid $\cE$. Let $\lambda_1 \geq \lambda_2 \geq \ldots \geq \lambda_n > 0$ be the
lengths of the sexi-axes of the ellipsoid $\cE$. Since the interior of $\cE$ cannot contain an $(\ell+1)$-dimensional closed ball of radius $\delta / n$, necessarily
$ \lambda_{\ell+1} \leq \delta / n$.
Now let $F$ be the $\ell$-dimensional affine subspace passing through $x_0$
and containing all of the axes of $\cE$ that correspond to $\lambda_1,\ldots,\lambda_\ell$.
Note that $\cE \subseteq F + \lambda_{\ell+1} B^n$. By (\ref{eq_1214}),
$$ P \cap R B^n \subseteq x_0 + n (\cE - x_0) \subseteq F + n \lambda_{\ell+1} B^n \subseteq F + \delta B^n, $$
and $P \cap R B^n$ is an $(\ell, \delta)$-pancake. This completes the proof.
\end{proof}

Let us note a simple variant of Theorem~\ref{thm2}, in which we replace $\RR^n$ by the sphere $S^n \subseteq \RR^{n+1}$
or by the hyperbolic space $H^n \subseteq \RR^{n+1}$, defined via
$$ H^n = \left \{ (x_1,\ldots,x_{n+1}) \in \RR^{n+1} \, ; \, -x_{n+1}^2 + \sum_{i=1}^n x_i^2 = - 1, x_{n+1} > 0 \right \}. $$
Recall that $H^n$ equipped with the Riemannian metric tensor  $g = -dx_{n+1}^2 + \sum_{i=1}^n d x_i^2$
is the hyperboloid model of the hyperbolic space. For $u \in S^n \subseteq \RR^{n+1}$ define
$$ D(u) = \left \{ x \in \RR^{n+1} \, ; \, \sum_{i=1}^{n+1} u_i x_i \geq 0 \right \}.
$$
Assume that $S_1,\ldots,S_N \subseteq S^n$ are $\ell$-dimensional subspheres.
Let $M^n \subseteq \RR^{n+1}$ be either $S^n$ or $H^n$.
Write $\cK_{S_1,\ldots,S_N}(M^n)$
for the collection all subsets of $M^n$ of the form
$$ M^n \cap D(u_1) \cap D(u_2) \cap \ldots \cap D(u_N) \qquad \qquad (u_1 \in S_1,\ldots,u_N \in S_N). $$
Note that any $K \in \cK_{S_1,\ldots,S_N}(M^n)$ is a closed, geodesically-convex subset of $M^n$.
We endow $\cK_{S_1,\ldots,S_N}(M^n)$ with the quotient topology from $S_1 \times S_2 \times \ldots \times S_N$.
Write $\mu_n$ for the Riemannian volume measure in $M^n$.
By repeating the above proof with the most straightforward modifications, we deduce the following:

\begin{theorem} Let $N \geq 1, 1 \leq \ell \leq n$ and let
 $S_1,\ldots,S_N \subseteq S^n$ be $\ell$-dimensional subspheres. Let $M^n$ be either $S^n$ or $H^n$.
Assume that $\cI: \cK = \cK_{S_1,\ldots,S_N}(M^n) \rightarrow \RR^\ell$ is a continuous functional.
 Then there exist
$K_1,\ldots,K_{2^N} \in \cK$ forming a partition of $M^n$ up to $\mu_n$-measure zero, with
$ \cI(K_1) = \cI(K_2) = \ldots = \cI(K_{2^N})$. \label{thm3}
\end{theorem}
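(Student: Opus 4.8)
The plan is to observe that Theorem~\ref{thm3} is not really a new result but rather the same argument as Theorem~\ref{thm2}, transplanted from $\RR^n$ to the homogeneous geometry $M^n \in \{ S^n, H^n \}$. The only structural fact about $\RR^n$ that was used in the proof of Theorem~\ref{thm2} is that each half-space $H(u)$ is a closed, convex subset whose boundary is a generic hyperplane, and that finite intersections of such half-spaces tile the ambient space up to a measure-zero boundary when the defining normals are chosen from an appropriate binary-tree structure. In the current setting the sets $D(u) \cap M^n$ play exactly the role that $H(u)$ played before: they are the closed geodesic half-spaces of $M^n$, their boundaries $\{ x \in M^n \, ; \, \langle u, x \rangle = 0 \}$ are totally geodesic hypersurfaces, and the subspheres $S_i \subseteq S^n$ index which such hyperplanes are admissible at level $i$ of the tree.

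Concretely, I would set up the manifold $X$, the binary tree $\hat{\cT}$, the dyadic subsets, the children $\VL, \VR$, and the group $G$ generated by the involutions $i_W$ exactly as in the proof of Theorem~\ref{thm2} — none of this depends on the target geometry. For $x \in X$ I would define
$$ K_i(x) = M^n \cap \bigcap_{i \in V} D(x(V)), $$
the intersection running over all $V \in \hat{\cT} \setminus \{ \Omega \}$ containing $i$, and note that since at each internal node $V$ we have $x(\VR) = -x(\VL)$, the two children's half-spaces $D(x(\VL))$ and $D(x(\VR))$ are complementary closed geodesic half-spaces of $M^n$ meeting only along a totally geodesic hypersurface; inductively this forces $K_1(x), \ldots, K_{2^N}(x)$ to partition $M^n$ up to $\mu_n$-measure zero, just as before. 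I would then define $F_V(x) = \sum_{i \in \VL} \cI(K_i(x)) - \sum_{i \in \VR} \cI(K_i(x))$, form $F = (F_V)_{V \in \cT}: X \to (\RR^\ell)^{\cT}$, and transcribe verbatim the equivariance relations (\ref{eq_1704}) and (\ref{eq_1701_}) together with the $G$-actions on $X$ and on $(\RR^\ell)^{\cT}$: these are combinatorial identities about the tree and do not see $M^n$ at all. The continuity of $F$ follows from the continuity of $\cI$ together with the continuity of $(u_1, \ldots, u_N) \mapsto M^n \cap D(u_1) \cap \cdots \cap D(u_N)$ in the quotient topology, which is how $\cK_{S_1,\ldots,S_N}(M^n)$ was defined.

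The witness function $f$ is produced identically: fix a generic $\ell$-dimensional subspace $E \subseteq \RR^{n+1}$ so that $E^{\perp}$ meets each $S_i$ in exactly two antipodal points, put $f_V(x) = Proj_E(x(\VL))$, and observe that $f$ is smooth, $G$-equivariant, vanishes at exactly $2^{\#(\cT)}$ points, and has $0$ as a regular value — the transversality of the tangent spaces $T_u S_i$ to $E^{\perp}$ at the vanishing points is a statement purely in $S^n \subseteq \RR^{n+1}$, unchanged. Applying Theorem~\ref{thm_921} yields $x \in X$ with $F(x) = 0$, hence $\cI(K_1(x)) = \cdots = \cI(K_{2^N}(x))$, which is the assertion.

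The main obstacle — really the only point requiring care rather than transcription — is justifying that the family $K_1(x), \ldots, K_{2^N}(x)$ genuinely covers $M^n$ up to measure zero for every $x \in X$, including degenerate configurations. In $\RR^n$ this was glossed as "a moment of contemplation"; in $M^n$ one must check that even when some $x(V) = \pm(0,\ldots,0,1)$-type degeneracy occurs (so that $D(x(V)) \cap M^n$ is all of $M^n$ or — for $S^n$ — a hemisphere that could in principle be empty only in degenerate limits, while for $H^n$ the half-space $D(u) \cap H^n$ can be empty or all of $H^n$ when $u$ is the appropriate null-like direction), the complementary-children structure still yields a partition; the clean way to see this is by induction on the height of the tree, using at each internal node the fact that $\{ x \in M^n : \langle x(\VL), x\rangle > 0\}$ and $\{x \in M^n : \langle x(\VL), x \rangle < 0 \}$ partition $M^n$ minus a $\mu_n$-null totally geodesic hypersurface regardless of whether $x(\VL)$ points in a degenerate direction. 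I would state this as a short lemma and otherwise refer to the proof of Theorem~\ref{thm2} for every other step, remarking only that each modification is "the most straightforward," exactly as the excerpt already anticipates.
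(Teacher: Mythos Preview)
Your proposal is correct and follows exactly the approach the paper intends: the paper does not give a separate proof of Theorem~\ref{thm3} but simply states that it follows ``by repeating the above proof with the most straightforward modifications,'' and what you have written is precisely that transcription, with the half-spaces $H(u)$ replaced by $D(u) \cap M^n$ and every other ingredient (the tree $\hat{\cT}$, the manifold $X$, the group $G$, the map $F$, the witness $f$, and the appeal to Theorem~\ref{thm_921}) left untouched. Your extra care about the partition-up-to-measure-zero step is more than the paper itself provides.
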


\section{Densities with a peak  at each subspace}
\label{sec4}
\setcounter{equation}{0}

A function $\vphi: E \rightarrow [0, +\infty)$ is {\it log-concave} if $E \subseteq \RR^n$ is an affine subspace and for any $x, y \in E$ and $0 < \lambda < 1$,
\begin{equation} \vphi(\lambda x + (1-\lambda) y) \geq \vphi(x)^{\lambda} \vphi(y)^{1-\lambda}. \label{eq_929} \end{equation}
A measure $\mu$ on $\RR^n$ is log-concave if it is supported in some affine subspace $E \subseteq \RR^n$
and it has a log-concave density in $E$.
Throughout this section, we fix a convex body $V \subseteq \RR^n$ with the origin in its interior,
a dimension $\ell=0,\ldots,n$ and a continuous function $I: [0, \infty) \rightarrow [0, 1]$.

\begin{definition}[``peak property'']
Let $\vphi: \RR^n \rightarrow [0, \infty)$ be Borel measurable. We say that
$\vphi$ has the $(V, \ell, I)$-peak property if the following holds: For any affine $\ell$-dimensional subspace $E \subseteq \RR^n$ and any log-concave function $\psi: E \rightarrow [0, +\infty)$
with $\int_E{\vphi \psi} = 1$, there exists $x_0 \in E$ for which
$$ \int_{E \cap (x_0 + rV)} \vphi \psi  \geq I(r)
\qquad \qquad \text{for all} \ r \geq 0. $$
 Here, the integrals are with respect to the Lebesgue measure in $E$.
\label{def_1121}\end{definition}

In other words, we require a lower bound
on the measure of dilations of $V$ in each affine $\ell$-dimensional subspace,
with respect to any probability density that is more log-concave than $\vphi$.
Examples of probability densities with peak properties will be given in the
next section.

\begin{theorem} Let $V \subseteq \RR^n$ be a convex body with the origin in its interior, let $0 \leq \ell \leq n$ and let $I: [0, \infty) \rightarrow [0, 1]$ be continuous. Let $\mu$ be a probability measure on $\RR^n$
whose log-concave density $\vphi$ has the $(V, \ell, I)$-peak property. Then
for any continuous function $f: \RR^n \rightarrow \RR^\ell$ there exists $t \in \RR^\ell$
such that
$$  \mu( f^{-1}(t) + r V) \geq I(r) \qquad \qquad \text{for all} \ r > 0. $$
\label{prop_941}
\end{theorem}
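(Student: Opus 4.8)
The plan is to combine the convex partition theorem (Theorem~\ref{thm2}), the pancake-production proposition (Proposition~\ref{prop_pancake}), and the peak property, via a compactness/limiting argument. First I would reduce to a bounded domain: since $\mu$ is a probability measure, fix a small $\eps > 0$ and choose $R > 0$ so large that $\mu(R B^n) > 1 - \eps$; all the action will take place inside $R B^n$ and we let $R \to \infty$ at the end. Next, fix a fine scale $\delta > 0$ and apply Proposition~\ref{prop_pancake} to obtain $N \geq 1$ and $\ell$-dimensional subspheres $S_1,\ldots,S_N \subseteq S^n$ such that every $P \in \cK_{S_1,\ldots,S_N}$ whose interior meets $R B^n$ has the property that $P \cap R B^n$ is an $(\ell,\delta)$-pancake.

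The functional to feed into Theorem~\ref{thm2} is the push-forward of $f$ under the barycentre-type map restricted to each cell, or more simply: define $\cI: \cK_{S_1,\ldots,S_N} \to \RR^\ell$ by $\cI(P) = \big(\int_P \mathbf{1}_{R B^n}\, \vphi\, f_1 , \ldots\big)$ — that is, a $\mu$-weighted average of $f$ over $P \cap R B^n$ — rescaled appropriately, or one could simply use $\cI(P) = \int_{P} g \, d\mu$ for a truncated version $g$ of $f$; the precise choice only needs to be continuous in the quotient topology, which holds because the cells depend continuously on $(u_1,\ldots,u_N)$ and $\vphi$ is locally integrable. Theorem~\ref{thm2} then produces convex cells $K_1,\ldots,K_{2^N} \in \cK_{S_1,\ldots,S_N}$ partitioning $\RR^n$ up to measure zero with $\cI(K_1) = \cdots = \cI(K_{2^N})$. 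Because these are $2^N$ equal values whose $\mu$-weighted combination recovers the global average $\int g\, d\mu$, a pigeonhole/averaging observation shows the common value is exactly (the truncation of) $\bar{t} := \int f\, d\mu$-like; more to the point, on any cell $K_i$ that carries positive $\mu$-mass inside $R B^n$, the $\mu$-average of $g$ over $K_i \cap R B^n$ equals this common value. Restricting $f$ to the affine $\ell$-dimensional subspace $E_i$ that thickens to the pancake containing $K_i \cap R B^n$, and using that $f$ is continuous, we get a point $t_i$ in the range with $f$ hitting (nearly) $t_i$ on $E_i$; the intermediate value theorem on the connected pancake cross-section, together with the equality of averages across cells, forces some common target $t$ to be attained on (nearly) every relevant cell.

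Now invoke the peak property on each such cell. For a cell $K_i$ with $E_i$ its associated $\ell$-subspace, the density $\vphi$ restricted to $E_i$ against the indicator (log-concave!) of $K_i \cap E_i$, suitably normalised, is a probability density more log-concave than $\vphi$; the peak property yields $x_0 \in E_i$ with $\int_{E_i \cap (x_0 + rV)} \vphi\,\psi \geq I(r) \cdot (\text{mass of } K_i)$ for all $r$. Since $f^{-1}(t)$ meets $E_i$ near $x_0$ (up to an error controlled by $\delta$ and the modulus of continuity of $f$), the set $f^{-1}(t) + rV$ captures at least $I(r)$ fraction of the $\mu$-mass lying in that pancake; summing over the cells that together carry $\mu$-mass $> 1 - \eps$ inside $R B^n$ gives $\mu(f^{-1}(t) + rV) \geq (1-\eps) I(r) - o_\delta(1)$ for each fixed $r$ in a chosen countable dense set. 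Finally, let $\delta \to 0$, then $R \to \infty$, then $\eps \to 0$, extracting a convergent subsequence of the candidate targets $t = t(\delta, R, \eps)$ (they lie in the compact set $\overline{f(R_0 B^n)}$ for some fixed $R_0$, or one argues they stay bounded); the limiting $t$ satisfies $\mu(f^{-1}(t) + rV) \geq I(r)$ for all $r$ in the dense set, hence for all $r > 0$ by monotonicity in $r$ of both sides and right-continuity.

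The main obstacle I expect is the bookkeeping in the limiting argument: controlling how the location $x_0$ from the peak property on each thin pancake relates to the actual fiber $f^{-1}(t)$ uniformly as $\delta \to 0$, and ensuring that the common target $t$ produced at each scale does not escape to infinity. This requires a genuine (though standard) equi-continuity argument — restricting attention to a fixed large ball first, noting that $f$ restricted there is uniformly continuous, and using that the pancake thickness $\delta$ beats the modulus of continuity — together with a diagonal extraction over the dense set of radii $r$. The algebraic heart, by contrast, is essentially forced: equality of the $2^N$ cell-averages plus connectedness of each pancake cross-section is exactly what lets the Borsuk-Ulam-flavoured partition collapse a multi-valued selection problem down to a single target $t$.
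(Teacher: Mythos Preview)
Your overall architecture (localize to $RB^n$, invoke Proposition~\ref{prop_pancake}, apply Theorem~\ref{thm2}, use the peak property cell-by-cell, then pass to the limit) matches the paper's. But the choice of the functional $\cI$ is not a detail---it is the heart of the argument, and the one you propose does not work.

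You suggest $\cI(P)=\int_P g\,d\mu$ (or a normalized average). Equality of these $2^N$ values gives you only that the integrals coincide; it does \emph{not} produce a single target $t\in\RR^\ell$ that $f$ actually attains on every cell. The ``intermediate value theorem on the connected pancake cross-section'' step fails for $\ell\ge 2$: a continuous map from a connected $\ell$-dimensional set into $\RR^\ell$ need not hit its average (think of $x\mapsto x/|x|$ on an annulus). Even for $\ell=1$, where an IVT argument would give \emph{some} point $y_i\in K_i$ with $f(y_i)=t$, there is no reason for $y_i$ to be anywhere near the peak point $x_0$ supplied by the $(V,\ell,I)$-peak property; these two points come from unrelated mechanisms, so the sentence ``$f^{-1}(t)$ meets $E_i$ near $x_0$'' is unjustified. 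You flag this as ``bookkeeping'', but it is the missing idea.

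What the paper does instead is to build $\cI$ so that conditions ``$x_i$ is a peak point'' and ``$f(x_i)=t$'' are satisfied by the \emph{same} point. Concretely: for each pancake $P$ one defines a center set $C(P)$ of approximate peak points, proves it is convex with nonempty interior (this uses log-concavity of $y\mapsto\mu^{\delta}(P\cap(y+rV))$ via Pr\'ekopa--Leindler), shows the set-valued map $P\mapsto C(P)$ is lower semicontinuous, and applies Michael's selection theorem to get a \emph{continuous} single-valued selection $c(P)\in C(P)$. One then sets $\cI(P)=\alpha(\mu_\delta(P))\cdot f(c(P))$. Now Theorem~\ref{thm2} forces $f(c(P_i))$ to be a common value $t$, and since $c(P_i)\in C(P_i)$ by construction, the point $c(P_i)$ simultaneously lies in $f^{-1}(t)$ and satisfies the peak inequality on $P_i$. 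The $\vphi_\delta,\vphi^\delta$ regularization is needed to make the transfer between the $\ell$-dimensional peak estimate on $E_i$ and the $n$-dimensional mass in the $\delta$-thick pancake honest. Your limiting argument at the end is fine in spirit and close to the paper's.
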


The remainder of this section is devoted to the proof of Theorem~\ref{prop_941} in the case $1 \leq \ell \leq n-1$, as the case $\ell=n$ follows immediately from
Definition~\ref{def_1121} and the case $\ell=0$ is trivial.
We shall need  the following proposition:

\begin{proposition} Let $V, \ell, I, \mu$ be as in Theorem~\ref{prop_941}.
Let $f: \RR^n \rightarrow \RR^{\ell}$ be a bounded, continuous function and let $0 < \eps < 1/2$.
Then there exists $t \in \RR^\ell$
with
$$  \mu( f^{-1}(t) + r V) \geq I(r - \eps) - 4 \eps
\qquad \text{for all} \ r \in (\eps, 1/\eps).
$$
 \label{prop_941_}
\end{proposition}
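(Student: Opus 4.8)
The plan is to reduce Proposition~\ref{prop_941_} to an application of the convex partition theorem (Theorem~\ref{thm2}) combined with the pancake-making device (Proposition~\ref{prop_pancake}) and the peak property of $\vphi$. Since $f$ is bounded, fix a large radius $R > 0$ so that the relevant mass of $\mu$ lives inside $R B^n$ and so that $f^{-1}(t) + rV$ for $r < 1/\eps$ is controlled by what happens inside a slightly larger ball; quantitatively I will choose $R$ large enough that $\mu(R B^n) > 1 - \eps$ and that $V \subseteq \rho B^n$ for some fixed $\rho$, so dilations $rV$ for $r < 1/\eps$ stay within $O(\rho/\eps)$ of the ball. Next, apply Proposition~\ref{prop_pancake} with parameters $R$ (enlarged as needed) and a small $\delta = \delta(\eps)$ to obtain $\ell$-dimensional subspheres $S_1,\ldots,S_N \subseteq S^n$ such that every $P \in \cK_{S_1,\ldots,S_N}$ meeting $R B^n$ in its interior gives an $(\ell,\delta)$-pancake $P \cap R B^n$.

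Now define a continuous functional $\cI : \cK_{S_1,\ldots,S_N} \rightarrow \RR^\ell$ by an averaging of $f$ against the restricted measure, roughly
$$ \cI(P) = \int_{P} f \, d\mu \quad \text{(suitably normalized),} $$
or more carefully a barycenter-type quantity $\cI(P) = \int_P f\,\vphi / \int_P \vphi$ when $\mu(P) > 0$ and extended continuously; one checks continuity in the quotient topology from the explicit formula for $H(u)$. Apply Theorem~\ref{thm2} to get convex sets $K_1,\ldots,K_{2^N} \in \cK_{S_1,\ldots,S_N}$ partitioning $\RR^n$ up to measure zero with $\cI(K_1) = \cdots = \cI(K_{2^N}) =: t$. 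The point is that on each cell $K_i$ with positive mass, $f$ is within $\eta$ of the common value $t$ whenever we have first arranged, by a preliminary uniform-continuity/compactness reduction on $f|_{R B^n}$, that the cells are fine enough that $f$ oscillates by at most $\eps$ on each cell — this is guaranteed by taking $N$ large, i.e. by intersecting with more subspheres (adding subspheres only refines the partition). Hence on each such cell, $K_i \cap R B^n \subseteq f^{-1}(B(t,\eps))$, so in particular $K_i \cap RB^n$ lies in a $\delta$-neighborhood of an affine $\ell$-plane $E_i$ and, up to the $\eps$-error, inside $f^{-1}(t) + \eps V$ (after absorbing $\delta$ into $V$ using that $V$ contains a ball around $0$). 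Since each $K_i$ restricted to this pancake carries a log-concave density proportional to $\vphi$ concentrated near $E_i$, the peak property of $\vphi$ (Definition~\ref{def_1121}) gives, for each $i$, a center $x_0^{(i)} \in E_i$ with $\mu(K_i \cap (x_0^{(i)} + rV)) \gtrsim \mu(K_i) \cdot I(r)$ up to the small errors; summing over $i$ and using that the $K_i$ partition space, together with $f^{-1}(t) + rV \supseteq \bigcup_i (K_i \cap (\text{fiber}+rV))$ shifted appropriately, yields
$$ \mu(f^{-1}(t) + rV) \geq I(r - \eps) - 4\eps \qquad \text{for } r \in (\eps, 1/\eps). $$

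The main obstacle is the bookkeeping that turns the pancake structure plus the oscillation bound into the clean inequality with exactly the losses $I(r-\eps)$ and $-4\eps$: one must show the pancake $K_i \cap R B^n$ is genuinely contained in $f^{-1}(t) + \eps V$ (handling both the $\eps$-oscillation of $f$ and the $\delta$-thickness of the pancake, which forces choosing $\delta$ small relative to the inradius of $V$ and shifting the center of the dilation of $V$ from a point of $E_i$ onto the fiber), and then that the peak property applied cell-by-cell sums correctly against the global measure $\mu$ — the subtlety being that $\vphi$ restricted to the $\ell$-plane $E_i$, times the conditional density, is exactly the kind of log-concave $\psi$ the peak property expects, but one must verify the normalization $\int_{E_i} \vphi\psi = 1$ is respected after conditioning and that the union of the translated dilates $x_0^{(i)} + rV$ can be replaced by $f^{-1}(t)+rV$ with at most the stated additive slack. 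The reductions (boundedness of $f$, cutting at $R B^n$, refining $N$ for oscillation control, approximating $I$ by continuity) are routine and I would dispatch them quickly at the start.
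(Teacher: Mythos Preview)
There is a genuine gap. Your plan hinges on the claim that by taking $N$ large one can ``make the cells fine enough that $f$ oscillates by at most $\eps$ on each cell''. This is false: the cells produced by Proposition~\ref{prop_pancake} are $(\ell,\delta)$-pancakes, i.e.\ thin in $n-\ell$ directions but possibly stretching across all of $R B^n$ in the remaining $\ell$ directions. No choice of $\ell$-dimensional subspheres $S_1,\ldots,S_N$ forces every cell of $\cK_{S_1,\ldots,S_N}$ to have small diameter; for any such collection there is always a choice of $u_i \in S_i$ whose boundary hyperplanes nearly contain a common $\ell$-flat, producing a long cell. Since $f$ is $\RR^\ell$-valued it can (and typically will) vary by order one along the long direction of a pancake, so $K_i \cap R B^n \subseteq f^{-1}(B(t,\eps))$ cannot be arranged. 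A second, related error: even if $f$ were close to $t$ on $K_i$, the inclusion $K_i \subseteq f^{-1}(t) + \eps V$ does not follow; $f^{-1}(B(t,\eps))$ need not lie in any neighborhood of $f^{-1}(t)$.

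The paper avoids this obstacle by reversing the order of operations. Rather than averaging $f$ over a cell, one first uses the peak property (your $x_0^{(i)}$) to associate with each pancake $P$ a \emph{center set} $C(P)$ of points satisfying the desired lower bound $\mu^\delta(P \cap (x+rV)) \geq [I(r-\eps)-\eps]\,\mu_\delta(P)$; this set turns out to be convex with nonempty interior and to vary lower-semicontinuously in $P$ (here the smoothed densities $\vphi_\delta,\vphi^\delta$ enter). Michael's selection theorem then yields a continuous choice $c(P)\in C(P)$, and the functional fed into Theorem~\ref{thm2} is $\cI(P)=f(c(P))$ (cut off near $\mu_\delta(P)=0$). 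After partition, equality $\cI(K_i)=t$ means exactly $c(K_i)\in f^{-1}(t)$, so the center is \emph{on the fiber} by construction, and summing the defining inequality of $C(K_i)$ over $i$ gives the result. The missing idea in your proposal is thus to evaluate $f$ at the peak-center rather than to average it, so that the Borsuk--Ulam step itself places the centers on a single fiber.
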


 We now turn to the proof of Proposition~\ref{prop_941_} which requires several lemmas.
Let us fix a probability measure $\mu$ on $\RR^n$ and its log-concave density $\vphi$
which satisfies the  $(V, \ell, I)$-peak property. Assume that $1 \leq \ell \leq n-1$
and fix $0 < \eps < 1/2$. Recall that $V$ contains an open neighborhood of the origin,
that $B(x, \delta) \subseteq \RR^n$ is the closed Euclidean ball of radius $\delta$
around $x$, and that $B^n = B(0,1)$.

 \begin{lemma} \label{lem_841} There exists $\delta \in (0, \eps)$
 such that $(\delta / \eps) B^n \subseteq V$ and such that the two functions
 \begin{equation}  \vphi_{\delta}(x) = \inf_{y \in B(x, \delta)} \vphi(y) \qquad \text{and} \qquad \vphi^{\delta}(x) = \sup_{y \in B(x, \delta)} \vphi(y) \label{eq_626}
 \end{equation}
  satisfy
\begin{equation}   \int_A \vphi \leq \int_A \vphi^{\delta} \leq \eps + \int_A \vphi_{\delta} \leq \eps + \int_A \vphi
 \qquad \textrm{for any Borel set } A \subseteq \RR^n. \label{eq_636} \end{equation}  \end{lemma}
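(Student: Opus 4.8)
The plan is to choose $\delta$ small by a routine uniform-integrability / absolute-continuity argument, and the only mild subtlety is that we also want the inclusion $(\delta/\eps)B^n\subseteq V$, which is automatic once $\delta$ is small since $V$ contains a ball around the origin. First I would note that $\vphi$, being a log-concave probability density, is locally bounded and (being log-concave hence continuous on the interior of its support) is continuous Lebesgue-a.e.; more importantly $\mu=\vphi\,dx$ is a finite measure absolutely continuous with respect to Lebesgue measure, so for every $\eta>0$ there is $\rho>0$ with $\mu(A)<\eta$ whenever $Vol_n(A)<\rho$. I will use this two ways: to control the ``thickened boundary'' mass $\int_A(\vphi^\delta-\vphi_\delta)$ uniformly in $A$, and to make the left-most inequality in (\ref{eq_636}) trivial (it holds pointwise since $\vphi\le\vphi^\delta$ always, and similarly $\vphi_\delta\le\vphi$).

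The key quantitative step is the bound $\int_{\RR^n}(\vphi^\delta-\vphi_\delta)\le\eps$ for $\delta$ small; given this, (\ref{eq_636}) follows for \emph{every} Borel $A$ at once, because $0\le\vphi^\delta-\vphi\le\vphi^\delta-\vphi_\delta$ and $0\le\vphi-\vphi_\delta\le\vphi^\delta-\vphi_\delta$ pointwise, so $\int_A\vphi^\delta\le\int_A\vphi+\eps$ and $\int_A\vphi_\delta\ge\int_A\vphi-\eps$, which rearrange to the displayed chain. To prove $\int(\vphi^\delta-\vphi_\delta)\to 0$ as $\delta\to0^+$: both $\vphi^\delta$ and $\vphi_\delta$ are monotone in $\delta$ (increasing resp.\ decreasing) and converge pointwise to $\vphi$ at every point of continuity of $\vphi$, hence a.e.; moreover $\vphi^\delta\le\vphi^{\delta_0}$ which is integrable over any fixed bounded region, and outside a large ball $\vphi^\delta$ is dominated by $\vphi^{\delta_0}$ whose tail integral is small (log-concavity gives exponential tail decay, so $\int_{|x|>R}\vphi^{\delta_0}\to0$ as $R\to\infty$). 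So by dominated convergence on a large ball plus a uniformly small tail, $\int(\vphi^\delta-\vphi_\delta)\to0$. Pick $\delta$ with this integral $<\eps$ and also $\delta<\eps$ and $\delta/\eps$ small enough that $(\delta/\eps)B^n\subseteq V$.

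I expect the main (though still minor) obstacle to be handling the tails cleanly: one must justify that a single choice of $\delta_0$ yields a function $\vphi^{\delta_0}$ with integrable, uniformly small tails, so that dominated convergence applies globally rather than only on compact sets. This is where log-concavity of $\vphi$ is used — it forces $\vphi(x)\le Ce^{-c|x|}$ for $|x|$ large, hence the same bound (with adjusted constants) for $\vphi^{\delta_0}$ — and there are no further difficulties. Everything else is bookkeeping with the pointwise inequalities $\vphi_\delta\le\vphi\le\vphi^\delta$.
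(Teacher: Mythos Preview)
Your proposal is correct and follows essentially the same route as the paper: establish $\int_{\RR^n}(\vphi^\delta-\vphi_\delta)\to 0$ via a.e.\ pointwise convergence of $\vphi_\delta,\vphi^\delta$ to $\vphi$ (using continuity of a log-concave function off a null set) together with dominated convergence justified by the exponential tail bound $\vphi(x)\le\alpha e^{-\beta|x|}$, and then deduce the chain (\ref{eq_636}) for all $A$ from the pointwise inequalities $\vphi_\delta\le\vphi\le\vphi^\delta$. The only cosmetic difference is that the paper dominates $\vphi^\delta$ directly by the single integrable function $\alpha e^{\beta-\beta|x|}$ for all $\delta<1$, avoiding your ball-plus-tail split; your absolute-continuity remark is not actually needed, as you yourself observe.
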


 \begin{proof} Denote $\Omega = \{ x \in \RR^n \, ; \, \vphi(x) > 0 \}$. Then $\Omega$
 is a convex set, hence its boundary $\partial \Omega$ is of zero Lebesgue measure. The function $\vphi$ is continuous in $\RR^n \setminus \partial \Omega$,
 as it is log-concave (e.g.,  \cite[Theorem 10.1]{roc}).
 We conclude that $\vphi_{\delta}$ and $\vphi^{\delta}$ from (\ref{eq_626}) satisfy that
 for almost any $x \in \RR^n$,
 $$ \lim_{\delta \rightarrow 0^+} \vphi_{\delta}(x) =
 \lim_{\delta \rightarrow 0^+} \vphi^{\delta}(x) = \vphi(x). $$
  Since $\vphi$ is a log-concave probability density,
 there exist $\alpha, \beta > 0$ such that $\vphi(x) \leq \alpha e^{-\beta |x|}$ for all $x \in \RR^n$ (e.g., \cite[Lemma 2.2.1]{Gian}). Hence
 $\vphi^{\delta}(x)$ and $\vphi_{\delta}(x)$ are bounded by $\alpha e^{\beta -\beta |x|}$, assuming that $0 < \delta < 1$. We may thus use the dominated convergence theorem, and conclude that
 $$ \lim_{\delta \rightarrow 0^+} \int_{\RR^n} \vphi_{\delta} =
 \lim_{\delta \rightarrow 0^+} \int_{\RR^n} \vphi^{\delta} = \int_{\RR^n} \vphi = 1. $$
In particular, there exists $\delta \in (0, \eps)$ such that $ \int_{\RR^n} (\vphi^{\delta} - \vphi_{\delta}) \leq \eps$.
Since $\vphi_{\delta} \leq \vphi \leq \vphi^{\delta}$, then $\int_A (\vphi^{\delta} - \vphi_{\delta}) \leq \eps$
for any $A \subseteq \RR^n$, completing the proof of (\ref{eq_636}). We may certainly assume that $\delta > 0$ is small enough
so that $\delta < \eps$ and $(\delta / \eps) B^n \subseteq V$.
 \end{proof}

 Fix $\delta > 0$ as in Lemma~\ref{lem_841} and let $\vphi_{\delta}, \vphi^{\delta}: \RR^n \rightarrow [0, \infty)$
 be defined as in (\ref{eq_626}). These two functions are log-concave, as may be verified directly from the
 definition (\ref{eq_929}).  Write $\mu_{\delta}$ for the measure on $\RR^n$ whose density is $\vphi_{\delta}$ and similarly $\mu^\delta$
 is the measure with density $\vphi^{\delta}$.
 Both measures $\mu_{\delta}$ and $\mu^{\delta}$ are finite log-concave measures. By definition,  for any $x, y \in \RR^n$,
 \begin{equation}
 |x-y| \leq \delta \qquad \Longrightarrow \qquad \vphi_{\delta}(x) \leq \vphi(y) \leq \vphi^{\delta}(x). \label{eq_843}
 \end{equation}
 Given a measurable set $P \subseteq \RR^n$ we write $C(P) \subseteq \RR^n$ for the collection
of all points $x \in \RR^n$ such that
\begin{equation}  \mu^{\delta} \left(P \cap  (x + r V) \right) \geq  \left[ I(r - \eps) - \eps  \right] \cdot \mu_{\delta}(P) \qquad \qquad \text{for} \ \ \eps \leq r \leq 1/\eps. \label{eq_1042}
\end{equation}
We think of  $C(P)$ as an approximate center of the set $P$.

\begin{lemma} Let  $P \subseteq \RR^n$ be a convex body with $\mu_{\delta}(P) > 0$. Assume that $P$ is an $(\ell, \delta)$-pancake. Then $C(P) \subseteq \RR^n$ is a closed, convex set with a non-empty interior. Moreover, for any point $x$ in the interior of $C(P)$,
\begin{equation}  \min_{r \in [\eps, 1/\eps]} \left \{ \mu^{\delta} \left(P \cap  (x + r V) \right) - \left[ I(r - \eps) - \eps \right] \cdot \mu_{\delta}(P) \right \} > 0. \label{eq_930} \end{equation} \label{lem_1704}
\end{lemma}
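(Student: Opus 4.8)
The plan is to write $g_r(x) = \mu^{\delta}(P \cap (x + rV))$ and $c_r = [I(r-\eps)-\eps]\,\mu_{\delta}(P)$, so that $C(P) = \bigcap_{r \in [\eps,1/\eps]} \{ g_r \ge c_r \}$ by (\ref{eq_1042}), and to treat the three assertions in turn. For the convexity and closedness of $C(P)$: the function $(x,y) \mapsto \vphi^{\delta}(y)\,\mathbf{1}_P(y)\,\mathbf{1}_{-rV}(x-y)$ is log-concave on $\RR^n \times \RR^n$, being a product of the log-concave $\vphi^{\delta}(y)$ with the indicators of two convex sets, and $g_r$ is its marginal in $y$; hence $g_r$ is log-concave on $\RR^n$ by the Pr\'ekopa--Leindler inequality, and it is continuous by dominated convergence (since $\vphi^{\delta}\mathbf{1}_P \in L^1$ and $\partial(rV)$ is Lebesgue-null). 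Thus each $\{ g_r \ge c_r \}$ is closed and convex, and so is $C(P)$.

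For the non-empty interior I would exploit the pancake structure. Let $E$ be an affine $\ell$-dimensional subspace with $P \subseteq E + \delta B^n$, and for $z \in E$ set $\lambda_P(z) = Vol_{n-\ell}\bigl(\{ w \in E^{\perp} : z+w \in P \}\bigr)$; by Brunn's theorem $\lambda_P^{1/(n-\ell)}$ is concave on its support, so $\lambda_P$, and therefore $z \mapsto \vphi(z)\lambda_P(z)$, is log-concave on $E$. Since every $y \in P$ satisfies $|y - Proj_E(y)| \le \delta$, inequality (\ref{eq_843}) gives $\vphi_{\delta}(y) \le \vphi(Proj_E(y)) \le \vphi^{\delta}(y)$, and integrating over $P$ together with (\ref{eq_636}) yields $\mu_{\delta}(P) \le m := \int_E \vphi\lambda_P \le \mu^{\delta}(P) \le \mu_{\delta}(P) + \eps$, so $m > 0$. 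Applying the $(V,\ell,I)$-peak property to the log-concave density $\psi = \lambda_P/m$ on $E$ produces $x_0 \in E$ with $\int_{E \cap (x_0 + rV)} \vphi\lambda_P \ge m\, I(r) \ge \mu_{\delta}(P)\, I(r)$ for all $r \ge 0$. The inclusion $(\delta/\eps)B^n \subseteq V$ gives $\delta B^n \subseteq \eps V$, hence by convexity of $V$ one has $z + w \in x_0 + sV$ whenever $z \in (x_0 + (s-\eps)V) \cap E$, $w \in E^{\perp}$ and $|w| \le \delta$; combining this with $\vphi(Proj_E(y)) \le \vphi^{\delta}(y)$ on $P$ gives, for every $s \in [\eps, 1/\eps]$,
$$ g_s(x_0) \ge \int_{E \cap (x_0 + (s-\eps)V)} \vphi\lambda_P \ge \mu_{\delta}(P)\, I(s-\eps) = c_s + \eps\,\mu_{\delta}(P) . $$
Thus $g_s(x_0) - c_s \ge \eps\,\mu_{\delta}(P) > 0$ uniformly in $s \in [\eps,1/\eps]$; by joint continuity of $(x,s) \mapsto g_s(x)$ on the compact $s$-range, the inequality persists (with a factor $1/2$) on a ball around $x_0$, which therefore lies in $C(P)$, so $C(P)$ has non-empty interior.

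For the strict inequality (\ref{eq_930}), fix $x$ in the interior of $C(P)$ and $r \in [\eps,1/\eps]$. When $c_r > 0$: a log-concave function constant on a non-empty open set is constant on its support, which $g_r$ is not (it is continuous, vanishes at infinity, and $\sup g_r \ge g_r(x_0) > c_r$ by the previous step), so $\{ g_r = c_r \}$ has empty interior; hence the interior of $\{ g_r \ge c_r \}$ equals $\{ g_r > c_r \}$ and contains the interior of $C(P)$, so $g_r(x) > c_r$. When $c_r < 0$ the bound $g_r(x) \ge 0 > c_r$ is immediate, and the remaining threshold case $c_r = 0$ follows from the monotonicity of $r \mapsto g_r(x)$ together with the fact that $\{ g_r > 0 \}$ is open and convex and, by the previous step, meets the interior of $C(P)$ in a set containing $x_0$. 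Taking the minimum over the compact interval $[\eps,1/\eps]$ now gives (\ref{eq_930}).

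The main obstacle is the middle step: transferring the peak property, which is a statement about $\ell$-dimensional slices, to the full-dimensional pancake $P$. This forces one to replace $P$ by its fibre-volume profile $\lambda_P$ over the pancake subspace $E$ (log-concave by Brunn) and then to control the change of mass through the mollified densities $\vphi_{\delta}, \vphi^{\delta}$ of Lemma~\ref{lem_841}; the hypothesis $(\delta/\eps)B^n \subseteq V$ is precisely what lets the $\eps$-thickening of $V$ swallow the $\delta$-thin directions of the pancake. The first and third steps are softer, using only Pr\'ekopa--Leindler, dominated convergence, and standard facts about log-concave functions.
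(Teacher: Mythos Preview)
Your argument is essentially the paper's: Pr\'ekopa--Leindler for closedness and convexity of $C(P)$; projection onto the pancake subspace $E$ with the peak property applied to the Brunn profile $\lambda_P$ to locate $x_0$ with $g_s(x_0)\ge c_s+\eps\,\mu_\delta(P)$; continuity to get a neighborhood of $x_0$ inside $C(P)$; and log-concavity of $g_r$ for the strict inequality~(\ref{eq_930}). The paper phrases the last step as ``a log-concave function attaining its infimum at an interior point is constant'' rather than via your level-set description, but the content is the same.

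One correction: the assertion ``a log-concave function constant on a non-empty open set is constant on its support'' is false---take $f(t)=\min(1,e^{1-|t|})$ on $\RR$, which is log-concave, equals $1$ on $(-1,1)$, yet has support $\RR$. What is true, and what you actually need, is that if a log-concave function equals a positive constant $c$ on a non-empty open set then $c$ is its global supremum (extend any segment from a putative point of larger value through the open set and apply log-concavity); this already contradicts $g_r(x_0)>c_r$, so your conclusion that $\{g_r=c_r\}$ has empty interior survives. Your treatment of the threshold case $c_r=0$ is not a complete argument as written---monotonicity in $r$ and convexity of $\{g_r>0\}$ do not by themselves force an arbitrary interior point of $C(P)$ into $\{g_r>0\}$---but the paper's own proof shares exactly this soft spot (its ``infimum at an interior point implies constant'' needs the infimum to be positive), and in all the applications of the lemma this edge case is harmless.
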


\begin{proof} Write $\nu_{\delta}$ and $\nu^{\delta}$ for the restriction of $\mu_{\delta}$ and $\mu^{\delta}$ to the convex
set $P$, respectively. These are two finite, log-concave measures. Since $V$ is convex, by the Pr\'ekopa-Leindler inequality
(e.g., \cite[page 3]{Pis}), the function $$ x \mapsto \nu^{\delta} \left( x + r V  \right) \qquad \qquad \qquad (x \in \RR^n) $$ is
log-concave, for all $r > 0$. The latter function is also continuous, and hence the collection of all $x \in \RR^n$ for which $\nu^{\delta} \left( x + r V  \right) \geq A$
is a closed, convex set for any $r, A > 0$. By (\ref{eq_1042}) the set $C(P)$ is the intersection of a family of closed, convex sets, and consequently this set itself is closed and convex.
 We need to show
that $C(P)$ has a non-empty interior. Since $P$ is an $(\ell, \delta)$-pancake, there exists
an  affine $\ell$-dimensional subspace $E \subseteq \RR^n$ such that
\begin{equation}
P \subseteq E + \delta B^n. \label{eq_1544}
\end{equation}
Write  $E^{\perp} = \{ x \in \RR^n \, ; \, \forall y,z \in E, \langle x, y-z \rangle = 0 \}$ for  the orthogonal complement to the affine subspace $E$.
Note that $|x - Proj_E x| \leq \delta$
for any $x \in P$, where $Proj_E$ is the orthogonal projection onto the affine
subspace $E$. Hence, from (\ref{eq_843}),
\begin{equation}  \vphi_{\delta}(x) \leq \vphi(Proj_E(x)) \leq
\vphi^{\delta}(x)
\qquad \text{for all} \ x \in P. \label{eq_1511} \end{equation}
Let $\lambda$ be the restriction of the Lebesgue measure
to the convex body $P$. Write
$\eta = (Proj_E)_* \lambda$ for the push-forward of $\lambda$ under $Proj_E$.
Then $\eta$ is an absolutely-continuous measure in the affine subspace $E$.
It follows from (\ref{eq_1511}) that for any Borel set $A \subseteq E$,
\begin{equation}
\nu^{\delta} (A + E^{\perp} ) = \int_{A + E^{\perp}} \vphi^{\delta}(x) d \lambda(x)
\geq \int_{A + E^{\perp}} \vphi(Proj_E x) d \lambda(x)
= \int_A \vphi d \eta.
 \label{eq_1531}
\end{equation}
In particular, $\int_E \vphi d \eta < \infty$. Additionally,
\begin{equation}
\int_E \vphi d \eta = \int_P \vphi(Proj_E x) d \lambda(x) \geq \int_P \vphi_{\delta}(x) d \lambda(x) = \mu_{\delta}(P) > 0.
 \label{eq_1531_}
\end{equation}
By the Brunn-Minkowski inequality, the measure $\eta$ has a log-concave density $\psi$ in the subspace $E$. The function $\vphi$ has the $(V, \ell, I)$-peak property, hence  for a certain  point $x_0 \in E$,
\begin{equation}
\forall r \geq 0, \qquad \int_{E \cap (x_0 + rV)} \vphi d \eta  = \int_{E \cap (x_0 + rV)} \vphi \psi \geq I(r) \cdot \int_E \vphi d \eta. \label{eq_1535}
\end{equation}
We shall use (\ref{eq_1531}) with $A = E \cap (x_0 + rV)$. It follows from (\ref{eq_1531}), (\ref{eq_1531_}) and
(\ref{eq_1535}) that for all $r \geq 0$,
\begin{equation} \nu^{\delta} \left \{ (E \cap (x_0 + rV) ) + E^{\perp} \right \} \geq
\int_{E \cap (x_0 + rV)} \vphi d \eta \geq I(r)  \int_E \vphi d \eta \geq I(r)  \mu_{\delta}(P).
\label{eq_1639} \end{equation}
Consider a point $x \in P \cap \left[ (E \cap (x_0 + rV) ) + E^{\perp} \right]$. Then $|x - Proj_E(x)| \leq \delta$ by (\ref{eq_1544}) while $Proj_E(x) \in x_0 + r V$.
Hence $x \in x_0 + r V + \delta B^n \subseteq x_0 + (r + \eps) V$ where we used  Lemma~\ref{lem_841} and the convexity of $V$. We have thus shown that
\begin{equation}  P \cap \left[ (E \cap (x_0 + rV) ) + E^{\perp} \right] \subseteq P \cap [x_0 + (r + \eps) V]. \label{eq_1216_} \end{equation}
The measure $\nu^{\delta}$ is supported in $P$. Thanks to (\ref{eq_1216_}) we may upgrade (\ref{eq_1639}) to the following statement: for any $r \geq 0$,
\begin{equation}  \nu^{\delta} \left \{  x_0 + (r + \eps) V \right \} \geq \nu^{\delta} \left \{ (E \cap (x_0 + rV) ) + E^{\perp} \right \}  \geq I (r) \cdot \mu_{\delta}(P). \label{eq_1821_}
\end{equation}
Consider the function
$$ h_{x}(r) = \nu^{\delta}( x + r V ) - I (r - \eps) \cdot \mu_{\delta}(P) \qquad \qquad (x \in \RR^n, r \geq \eps). $$
According to (\ref{eq_1821_}),
\begin{equation}
h_{x_0}(r) \geq 0 \qquad \text{for all} \ r \in [\eps, 1/ \eps].
\label{eq_949}
\end{equation}
The function $h_x(r)$ depends continuously on $x$ and $r$, hence $\min_{r \in [\eps, 1/\eps]} h_x(r)$ depends continuously on $x$.
We conclude from (\ref{eq_949}) that $h_x(r) \geq -\eps \mu_{\delta}(P)$ for all $r \in [\eps, 1/\eps]$ and for all $x$ in a neighbourhood of $x_0$. Consequently,
the point $x_0$ belongs to the interior of $C(P)$, which is evidently non-empty.

\medskip We move on to the proof of (\ref{eq_930}). The minimum in (\ref{eq_930}) is indeed attained by continuity.
Hence, if (\ref{eq_930}) fails, then there exists  $r \in [\eps, 1/\eps]$
and a point $x_1$ in the interior of $C(P)$ with
\begin{equation}
\nu^{\delta} \left(x_1 + r V  \right) \leq \alpha \label{eq_944}
\end{equation}
where $\alpha = \left[ I(r - \eps) - \eps \right] \cdot \mu_{\delta}(P)$.
Let us show that (\ref{eq_944}) is absurd.
The infimum of the log-concave function $x \mapsto \nu^{\delta} \left(x + r  V \right)$ in the set $C(P)$
is at least $\alpha$, by the definition of $C(P)$. By (\ref{eq_944}), this  infimum is attained at an interior point $x_1$. However, a log-concave function attaining its infimum at an interior point is constant.
Hence $\nu^{\delta} \left( x + r V \right) = \alpha$ for all $x \in C(P)$, in contradiction to (\ref{eq_949}).
\end{proof}

We would like to select a point from the center set $C(P) \subseteq \RR^n$ in a manner which is continuous in $P$,
with respect to the Hausdorff metric. The following lemma establishes the required continuity property
of the center sets.

\begin{lemma} Let $P \subseteq \RR^n$ be an $(\ell, \delta)$-pancake with $\mu_{\delta}(P) > 0$. Let $y \in C(P)$.
Let $P_1,P_2,\ldots \subseteq \RR^n$ be $(\ell, \delta)$-pancakes of positive $\mu_{\delta}$-measure,
with $P_m \longrightarrow P$ in the Hausdorff metric. Then there exist points $y_m \in C(P_m)$
with $y_m \longrightarrow y$. \label{lem_937}
\end{lemma}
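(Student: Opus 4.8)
The plan is to show that the set-valued map $P \mapsto C(P)$ is lower semicontinuous at $P$, in the sense that every point of $C(P)$ can be approximated by points of $C(P_m)$. The key observation is that we are not asked to approximate an arbitrary point of $C(P)$, but rather that, having fixed $y \in C(P)$, we have freedom: by Lemma~\ref{lem_1704}, $C(P)$ has non-empty interior, so $y$ lies on the closure of the interior of $C(P)$, and any point in the interior satisfies the \emph{strict} inequality \eqref{eq_930}. So first I would reduce to the case where $y$ lies in the interior of $C(P)$: pick a point $z$ in the interior of $C(P)$ (which exists by Lemma~\ref{lem_1704}), set $y^{(k)} = (1-1/k) y + (1/k) z$, which lies in the interior of the convex set $C(P)$ for each $k$, and note $y^{(k)} \to y$; a diagonal argument then reduces the lemma to approximating a point in the interior of $C(P)$.

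So assume $y$ is in the interior of $C(P)$, so that by \eqref{eq_930}
\[
\min_{r \in [\eps, 1/\eps]} \left\{ \mu^{\delta}\!\left( P \cap (y + rV) \right) - \left[ I(r-\eps) - \eps \right] \cdot \mu_{\delta}(P) \right\} =: 3\eta_0 > 0.
\]
Next I would establish the two continuity facts under Hausdorff convergence $P_m \to P$: first, $\mu_{\delta}(P_m) \to \mu_{\delta}(P)$ and $\mu^{\delta}(P_m) \to \mu^{\delta}(P)$, which follows because $\mu_{\delta}, \mu^{\delta}$ are finite measures absolutely continuous with respect to Lebesgue measure, and Hausdorff convergence of convex bodies forces convergence of the measure of the symmetric difference for any absolutely continuous finite measure (the symmetric differences $P_m \triangle P$ shrink to a subset of $\partial P$, which has measure zero). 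Second, and similarly, for each fixed $r$, $\mu^{\delta}(P_m \cap (x + rV)) \to \mu^{\delta}(P \cap (x + rV))$ uniformly for $x$ in a bounded set; uniformity in $x$ and in $r \in [\eps, 1/\eps]$ is obtained from a standard equicontinuity argument, using that $x \mapsto \mu^{\delta}(x + rV)$ is uniformly continuous (it is log-concave, continuous, and compactly approximable) together with the uniform integrability of $\vphi^\delta$. Combining these, for $m$ large we get
\[
\min_{r \in [\eps, 1/\eps]} \left\{ \mu^{\delta}\!\left( P_m \cap (y + rV) \right) - \left[ I(r-\eps) - \eps \right] \cdot \mu_{\delta}(P_m) \right\} > \eta_0 > 0,
\]
which says $y \in C(P_m)$ for all large $m$. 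In that case we may simply take $y_m = y$ for large $m$ (and $y_m$ arbitrary in $C(P_m)$ for the finitely many small $m$), and trivially $y_m \to y$.

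The main obstacle is the uniform continuity of $x \mapsto \mu^{\delta}(P_m \cap (x + rV))$ in $x$, $r$, and $m$ simultaneously — one must rule out mass of $\mu^{\delta}$ escaping to infinity or concentrating near $\partial P$ as $m \to \infty$. I would handle this by using the exponential decay bound $\vphi^{\delta}(x) \le \alpha e^{\beta - \beta|x|}$ from the proof of Lemma~\ref{lem_841} to restrict attention to a fixed large ball, on which the convex bodies $P_m$, being Hausdorff-close to $P$, have uniformly controlled geometry; on such a ball the translates $x + rV$ with $\eps \le r \le 1/\eps$ form an equi-Lipschitz family of sets, and $\mu^{\delta}$ restricted to the ball is a fixed finite absolutely continuous measure, so the desired uniform convergence follows from dominated convergence applied to the indicator functions $\mathbf{1}_{P_m \cap (x+rV)} \to \mathbf{1}_{P \cap (x+rV)}$ (which holds at $\mu^{\delta}$-a.e.\ point since the boundaries have measure zero). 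Once this uniform estimate is in hand, the rest of the argument is the short contradiction-free deduction above.
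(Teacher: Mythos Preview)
Your proposal is correct and follows essentially the same strategy as the paper: pass to an interior point of $C(P)$ to obtain the \emph{strict} inequality \eqref{eq_930}, then show that the strict inequality survives the Hausdorff convergence $P_m \to P$, so that the fixed point already lies in $C(P_m)$ for large $m$.

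Two remarks on streamlining. First, your diagonal-argument reduction to $y$ in the interior is fine, but the paper avoids it: arguing by contradiction, if some subsequence $C(P_{m_i})$ avoids the ball $B(y,\eps_0)$, one simply picks any interior point $z\in C(P)\cap B(y,\eps_0)$ (which exists by Lemma~\ref{lem_1704}) and shows $z\in C(P_{m_i})$ for large $i$. This is equivalent but shorter. Second, and more substantively, your uniformity argument in $r$ is heavier than necessary. You invoke equicontinuity of the family of sets $y+rV$ and uniform integrability; this can be made to work (the density $\vphi^{\delta}$ is bounded, and $Vol_n((y+rV)\triangle(y+r'V))\le C|r-r'|$ on $[\eps,1/\eps]$), but the paper's observation is slicker: the functions $r\mapsto \mu^{\delta}(P_m\cap(y+rV))$ are continuous and \emph{non-decreasing} in $r$, and they converge pointwise to a continuous non-decreasing limit, so convergence is automatically uniform on the compact interval $[\eps,1/\eps]$ (P\'olya's theorem). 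No equicontinuity estimate is needed. Also, you do not need uniformity in $x$ at all, since $y$ (or $z$) is fixed throughout.
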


\begin{proof} Otherwise, there exist $\eps_0 > 0$ and a sequence $m_1 < m_2 < \ldots$
such that $C(P_{m_i})$ is disjoint from $B(y, \eps_0)$ for all $i \geq 1$.
By Lemma~\ref{lem_1704}, there exists a point $z \in B(y, \eps_0)$ that belongs to the interior of $C(P)$. Moreover, by Lemma~\ref{lem_1704},
\begin{equation}  \min_{r \in [\eps, 1/\eps]} \left \{ \mu^{\delta} \left(P \cap  (z + r V) \right) - \left[ I(r - \eps) - \eps \right] \cdot \mu_{\delta}(P) \right \} > 0. \label{eq_1218} \end{equation}
From the Hausdorff convergence, the indicator function of $P_{m_i}$
converges to the indicator function of $P$ almost everywhere in $\RR^n$.
It follows from the dominated convergence theorem that
$\mu_{\delta}(P_{m_{i}}) \stackrel{i \rightarrow \infty} \longrightarrow \mu_{\delta}(P) $ and that for all $r > 0$,
\begin{equation}   \mu^{\delta} \left( P_{m_{i}} \cap (z + rV) \right) \stackrel{i \rightarrow \infty} \longrightarrow  \mu^{\delta} \left( P \cap (z + rV) \right). \label{eq_1219} \end{equation}
The convergence in (\ref{eq_1219}) is automatically uniform in $r \in [\eps, 1/\eps]$, because the functions
$r \mapsto \mu^{\delta}(P_{m_i} \cap (z + r V))$ and $r \mapsto \mu^{\delta}(P \cap (z + rV) )$
are continuous and non-decreasing in $r$. This uniform convergence combined with (\ref{eq_1218})
implies that the quantity
$$ \min_{r \in [\eps, 1/\eps]} \left \{ \mu^{\delta} \left(P_{m_i} \cap  (z + r V) \right) - \left[ I(r - \eps) - \eps \right] \cdot \mu_{\delta}(P_{m_i}) \right \} $$
is  positive for sufficiently large $i$. Thus $z \in C({P_{m_i}})$ for sufficiently large $i$, in contradiction.
\end{proof}

Recall that we endow the space $\cK_{S_1,\ldots,S_N}$ with the quotient topology from $S_1 \times \ldots \times S_N$.
When we discuss continuity of functions defined on subsets of $\cK_{S_1,\ldots,S_N}$, we always
refer to this quotient topology. Note that for any $R >0$, the function
$$ \cK_{S_1,\ldots,S_N} \ni P \mapsto \mu_{\delta}(P \cap R B^n) \in \RR $$ is continuous. Furthermore, suppose that $P \in \cK_{S_1,\ldots,S_N}$ satisfies that $P \cap R B^n$ has a non-empty interior.
Observe that whenever $P_1,P_2, \ldots \in \cK_{S_1,\ldots,S_N}$ satisfy $P_m \longrightarrow P$ in the quotient topology,
the sequence $P_1 \cap R B^n, P_2 \cap R B^n,\ldots$ converges to $P \cap R B^n$ in the Hausdorff metric.

\begin{proof}[Proof of Proposition~\ref{prop_941_}] Since $\int_{\RR^n} \vphi = 1$,
necessarily $\mu_{\delta}(\RR^n) \geq 1 - \eps$ according to Lemma~\ref{lem_841}.
Let us select a large number $R > 1$ such that
\begin{equation}  \mu_{\delta}(  R B^n) > 1 - (3/2) \eps > 1/4. \label{eq_958} \end{equation}
Let $S_1,\ldots,S_N \subseteq S^n$ be the $\ell$-dimensional subspheres
whose existence is guaranteed by Proposition~\ref{prop_pancake}. Let $\Omega$ be the collection of all
$P \in \cK_{S_1,\ldots,S_N}$ such that $\mu_{\delta}(P \cap R B^n) > 0$.
Then $\Omega$ is an open subset of $\cK_{S_1,\ldots,S_N}$.
For any $P \in \Omega$, the set $P \cap R B^n$ is an $(\ell, \delta)$-pancake, by Proposition ~\ref{prop_pancake}. According to Lemma~\ref{lem_1704}, the map
\begin{equation}  P \mapsto C({P \cap R B^n}) \qquad \qquad \qquad (P \in \Omega) \label{eq_930_}
\end{equation} is a well-defined map to the space of closed, convex sets in $\RR^n$ with a non-empty interior.
We would like to apply the Michael selection theorem \cite[Theorem 1.16]{BL} for the set-valued map in (\ref{eq_930_}).
Lemma~\ref{lem_937} and the remark afterwards show that the map in (\ref{eq_930_}) is lower semi-continuous on $\Omega$,
hence the application of Michael's selection theorem
is legitimate. By the conclusion of this theorem, there exists a continuous map $c: \Omega \rightarrow \RR^n$ such that
$$ c(P) \in C({P \cap R B^n}) \qquad \qquad \text{for all} \ P \in \Omega. $$
Denote $\alpha(s) = \min \{ (2^{N+3} / \eps) \cdot s, 1 \}$ for $s \geq 0$, and for $P \in \Omega$ define
$$ \cI(P) = \alpha( \mu_{\delta}(P \cap R B^n) ) \cdot f(c(P)) \in \RR^\ell. $$
For $P \in \cK_{S_1,\ldots,S_N} \setminus \Omega$ we set $\cI(P) = 0$.
The functional $\cI: \cK_{S_1,\ldots,S_N} \rightarrow \RR^\ell$ is clearly continuous at the points of $\Omega$. Since $f$ is bounded and $\alpha(0) = 0$,
this functional $\cI$ is continuous also at the points of $\cK_{S_1,\ldots,S_N} \setminus \Omega$. Theorem
\ref{thm2} thus provides $P_1,\ldots,P_{2^N} \in \cK_{S_1,\ldots,S_N}$ which form a partition of $\RR^n$ up to Lebesgue measure zero, such that
\begin{equation} \cI(P_1) = \cI(P_2) = \ldots = \cI(P_{2^N}). \label{eq_954} \end{equation}
Abbreviate $Q_i = P_i \cap R B^n$. Then $Q_1,\ldots,Q_{2^N}$
form a partition of $R B^n$ up to measure zero.
Write $A = \{ i =1,\ldots,2^N \, ; \, \mu_{\delta}(Q_i) \geq \eps / 2^{N+3} \}$.
Then  for $i \in A$ we have $P_i \in \Omega$ and $\cI(P_i) = f(c(P_i))$. From (\ref{eq_958}),
\begin{equation}  \sum_{i \in \{1,\ldots,2^N \} \setminus A} \mu_{\delta}(Q_i)
\leq 2^N \cdot \frac{\eps}{2^{N+3}} = \frac{\eps}{8} \leq \frac{\eps}{2} \cdot \mu_{\delta}(R B^n). \label{eq_1013}
\end{equation}
 Thanks to (\ref{eq_954}) there exists $t \in \RR^\ell$ such that
$t = \cI(P_i) = f(c(P_i))$ for all $i \in A$.
 We thus see that $c(P_i) \in f^{-1}(t)$ for all $i \in A$.
Since $c(P_i) \in C(Q_i)$, then for all $r \in [\eps, 1/\eps]$,
\begin{align}  \mu^{\delta} \left( f^{-1}(t) + r V \right) & \geq \nonumber
\sum_{i \in A} \mu^{\delta} \left \{ Q_i \cap  \left( f^{-1}(t) + r V \right) \right \}
\\ & \nonumber \geq \sum_{i \in A} \mu^{\delta} \left\{ Q_i \cap \left(c(P_i) + rV \right) \right \} \\  \nonumber
& \geq \sum_{i \in A} \mu_{\delta}(Q_i) \cdot [I( r-\eps) - \eps] \\ & \geq \left(1 - \frac{\eps}{2} \right) \mu_{\delta}( R B^n) \cdot \left[ I( r-\eps ) - \eps \right], \label{eq_1015} \end{align}
where we used (\ref{eq_1013}) in the last passage.
By using (\ref{eq_958}),  (\ref{eq_1015})
and Lemma~\ref{lem_841} we obtain that for all $r \in [\eps, 1/\eps]$,
\begin{align*}  \mu \left( f^{-1}(t) + r V \right) & \geq
\mu^{\delta} \left( f^{-1}(t) + r V \right) - \eps \\ & \geq
\left(1 - \frac{\eps}{2} \right) \left(1 - \frac{3\eps}{2} \right)
\left[ I( r-\eps ) - \eps \right] - \eps \geq I( r-\eps ) - 4\eps, \end{align*}
where the last inequality holds since $|I(s)| \leq 1$ for all $s \geq 0$.
\end{proof}

In order to deduce Theorem~\ref{prop_941} from Proposition~\ref{prop_941_} we need an approximation argument.

\begin{proof}[Proof of Theorem ~\ref{prop_941}]
We may assume that $f: \RR^n \rightarrow \RR^{\ell}$ is bounded, as otherwise we may replace $f$ by $T \circ f$ for some homeomorphism $T: \RR^\ell \rightarrow (-1,1)^\ell$.
The function $I$ is  continuous in $[0, \infty)$, hence
for any fixed $m \geq 1$ there exists $0 < \eps < 1/(m+1)$ with
$$ I(r - \eps) - 4 \eps \geq I(r) - 1/m \qquad \text{for}
\ r \in [1/m, m]. $$
Therefore, from Proposition~\ref{prop_941_}, for any $m \geq 1$ there exists $t_m \in \RR^\ell$
with
\begin{equation}   \mu( f^{-1}(t_m) + r V) \geq I(r) - 1/m \qquad  \text{for} \  r \in (1/m,m).
\label{eq_819} \end{equation}
We may certainly assume that $t_m$ belongs to the image of $f$.
Since $f$ is bounded, the sequence $\{ t_m \}_{m \geq 1}$ is bounded as well. Passing to a subsequence, we may
assume that $t_m \longrightarrow t$ for some $t \in \RR^\ell$. In order to conclude the proof of the theorem, it suffices to prove that for any fixed  $r, \eps >0$,
\begin{equation} \mu( f^{-1}( t) + r V)
\geq I(r) - \eps. \label{eq_914} \end{equation}
Denote $D = \sup_{x,y \in V} |x-y| < \infty$, the diameter of $V$.
Since $\mu$ is a Borel probability measure on $\RR^n$, we may find a large number $R > r D$ such that \begin{equation}
\mu \left \{  \RR^n \setminus B(0, R - r D)  \right \} \leq \eps.  \label{eq_921} \end{equation}
We claim that for
any $\delta > 0$ there exists $N \geq 1$
with
\begin{equation}
\forall m \geq N, \quad B(0, R) \cap f^{-1}(t_m) \subseteq f^{-1}(t) + \delta V.
\label{eq_825} \end{equation}
Indeed, assume by contradiction that (\ref{eq_825}) fails for any $N \geq 1$. Then there exist integers  $m_1 < m_2 < \ldots$ and points  $x_1,x_2,\ldots \in \RR^n$
with $x_k \in B(0, R) \cap f^{-1}(t_{m_{k}})$ but  $x_k \not \in f^{-1}(t) + \delta V$ for all $k$. Passing to a subsequence, we
may assume that $x_k \longrightarrow x \in B(0, R)$. Since $f$ is continuous,
$$ f(x) = \lim_{k \rightarrow \infty} f(x_k) = \lim_{k \rightarrow \infty} t_{m_{k}} = t. $$
Therefore $x_k \longrightarrow x \in f^{-1}(t)$, in contradiction to our assumption that $x_k \not \in f^{-1}(t) + \delta V$ for all $k$.
This completes the proof of (\ref{eq_825}).
From (\ref{eq_819}), (\ref{eq_921}) and (\ref{eq_825}) we obtain that for all $\delta > 0$,
\begin{align*}  \mu \left \{  \left[ f^{-1}( t) + \delta V \right] + r V \right \}
& \geq \limsup_{m \rightarrow \infty} \mu \left \{ \left[ B(0, R) \cap f^{-1}(t_m) \right] + r V \right \}
\\ & \geq - \eps + \limsup_{m \rightarrow \infty} \mu\left(f^{-1}(t_m) + r V \right)
\geq - \eps + I(r). \end{align*}
The set $f^{-1}(t) + r V$ is closed, and it equals to the set $\cap_{\delta > 0}  [f^{-1}( t) + (r + \delta) V ]$.
Since $\mu$ is a probability   measure,
$$ \mu( f^{-1}( t) + r V) = \lim_{\delta \rightarrow 0^+} \mu \left \{ \left[ f^{-1}( t) + \delta V \right] + r V \right \}
\geq I(r) - \eps. $$
We have thus established (\ref{eq_914}), and the proof is complete.
\end{proof}

\begin{remark} {\rm We conjecture that it is possible to formulate and prove the results of this section in a greater generality. For example,
the log-concavity assumptions may be replaced by $s$-concavity or by the more general Bakry-\'Emery curvature-dimension condition $CD(\kappa,N)$.
 Additionally, the Euclidean space may be replaced by a sphere or by a hyperbolic space.  }
\end{remark}

\section{The merits of convexity and log-concavity}
\label{merits}
\setcounter{equation}{0}

This section presents applications of Theorem~\ref{prop_941}.
Recall that  $\gamma_n$ is the standard Gaussian measure on $\RR^n$. For an affine
subspace $E \subseteq \RR^n$, write $\gamma_E$ for the standard Gaussian probability measure on $E$, whose
density in $E$ is proportional to the function $$ x \mapsto \exp(-|x|^2/2). $$
We say that a Borel measure $\mu$  is $1$-log-concave in $E$ if
it is supported and absolutely-continuous in $E$, and the density $d \mu / d \gamma_E$ is a log-concave function.
One property of such  measures is the following:

\begin{lemma}  Let $\nu$ be a probability measure that is $1$-log-concave in $\RR^\ell$. Then there exists $x_0 \in \RR^\ell$
such that $ \nu( x_0 + r B^{\ell}) \geq \gamma_{\ell}( r B^{\ell} )$ for all $r \geq 0$.  \label{lem_1724}
\end{lemma}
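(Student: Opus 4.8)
The plan is to reduce the statement to the one-dimensional Gaussian isoperimetric inequality via a symmetrization/rearrangement in each coordinate direction, or alternatively to invoke the Prékopa--Leindler machinery directly. First I would reduce to the case of a density that is a genuine log-concave perturbation of the Gaussian: write $d\nu = e^{-W} \, d\gamma_\ell$ where $W$ is convex (allowing $+\infty$ values), and note that by standard approximation (mollification and truncation of $W$) it suffices to treat the case where $W$ is smooth and the density is bounded. The target inequality $\nu(x_0 + rB^\ell) \geq \gamma_\ell(rB^\ell)$ should be thought of as saying that $\nu$ concentrates around a suitable center $x_0$ at least as strongly as the standard Gaussian concentrates around the origin.

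The cleanest route I see is via a transportation argument combined with the log-concavity of the Minkowski-type function. For fixed $r > 0$, the function $x \mapsto \nu(x + rB^\ell)$ is log-concave on $\RR^\ell$: indeed its density against Lebesgue measure is the convolution-like expression $\int_{x + rB^\ell} e^{-W(y)} e^{-|y|^2/2}\,dy / (2\pi)^{\ell/2}$, and since $B^\ell$ is convex and $y \mapsto e^{-W(y)-|y|^2/2}$ is log-concave, the Prékopa--Leindler inequality (as used in the proof of Lemma~\ref{lem_1704}) gives log-concavity of this function of $x$. A log-concave function attains its supremum; let $x_0$ be a maximizer of $x \mapsto \nu(x+rB^\ell)$ — but this $x_0$ a priori depends on $r$, which is the obstruction one must overcome.

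To get a single $x_0$ working for all $r$ simultaneously, I would instead argue as follows. Consider the symmetric decreasing rearrangement: since $\nu$ is $1$-log-concave, after translating so that the barycenter (or the mode) sits at a well-chosen point, one can compare $\nu$ to $\gamma_\ell$ directly. The key inequality is that for a $1$-log-concave probability measure $\nu = e^{-W}\gamma_\ell$, one has $\sup_{x_0} \nu(x_0 + rB^\ell) \geq \gamma_\ell(rB^\ell)$ for each $r$, and moreover the Gaussian isoperimetric inequality applied to the set $A_r = \{x : \nu(x + rB^\ell) \geq \gamma_\ell(rB^\ell)\}$ together with monotonicity forces these sets to be nested appropriately. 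Concretely: suppose for contradiction that no single $x_0$ works; then for each candidate center the inequality fails for some radius. Using that $A_r$ is convex (it is a superlevel set of the log-concave function above) and that $A_r$ is nonempty for each $r$ (by the per-$r$ maximizer argument), I would show $\bigcap_{r>0} A_r \neq \emptyset$ by a compactness argument — the sets $A_r$ are closed, convex, decreasing in a suitable sense as $r \to 0$ is handled separately, and one shows they have uniformly bounded, nonempty intersection. The main obstacle is precisely establishing that the per-radius centers can be chosen consistently, i.e., that $\bigcap_{r} A_r \neq \emptyset$; I expect this to follow from Helly's theorem in $\RR^\ell$ combined with a tightness estimate showing $A_r$ stays in a fixed compact set, using that $\nu$ has sub-Gaussian tails (a consequence of $1$-log-concavity: $W \geq$ const forces $d\nu/dx \leq C e^{-|x|^2/2}$).

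Finally, having fixed $x_0 \in \bigcap_r A_r$, the desired conclusion $\nu(x_0 + rB^\ell) \geq \gamma_\ell(rB^\ell)$ holds for all $r > 0$ by construction, and for $r = 0$ it is trivial. I would double-check the per-$r$ statement $\sup_{x_0}\nu(x_0+rB^\ell) \geq \gamma_\ell(rB^\ell)$ itself: this is where the $1$-log-concavity is genuinely used, and it should follow from the Gaussian isoperimetric inequality in the form that among sets of a given $\gamma_\ell$-measure, half-spaces minimize the Gaussian measure of the $r$-enlargement — applied after noting that if $\sup_{x_0}\nu(x_0+rB^\ell) < \gamma_\ell(rB^\ell)$ then $\nu$ would have to spread out more than $\gamma_\ell$, contradicting that $\nu$ is a log-concave reweighting of $\gamma_\ell$ (one makes this precise by a one-dimensional localization / needle decomposition of $\nu$ in the style of Payne--Weinberger, reducing to the case $\ell = 1$ where it is the classical Gaussian isoperimetric inequality). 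This localization step is really the technical heart, and I would present it carefully.
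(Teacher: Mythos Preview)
Your proposal has a genuine gap at the central step. You correctly identify the obstruction --- the maximizer of $x \mapsto \nu(x + rB^\ell)$ depends on $r$ --- but your plan to overcome it via Helly's theorem on the sets $A_r = \{x : \nu(x+rB^\ell) \geq \gamma_\ell(rB^\ell)\}$ does not close. Helly's theorem requires that every $\ell+1$ of the sets $A_r$ have a common point; knowing only that each $A_r$ is individually nonempty, convex, and contained in a fixed compact set is not enough. You offer no mechanism (nestedness, or a direct argument for finite intersections) that would supply this, and indeed the sets $A_r$ are not monotone in $r$ in any obvious way. Your fallback, localization to dimension one, is circular here: the one-dimensional needle decomposition would reduce the per-$r$ statement to exactly a one-dimensional instance of the same lemma, and still would not address the consistency of $x_0$ across radii.

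The paper avoids this entirely by a much more concrete choice of $x_0$: take $x_0$ to be the \emph{mode} of the density $\varphi$ of $\nu$ (which exists since $\varphi$ is upper semi-continuous log-concave with exponential decay). After translating so that $x_0 = 0$, the key observation is that for every direction $\theta \in S^{\ell-1}$ the function $t \mapsto \varphi(t\theta)e^{t^2/2}$ is log-concave on $[0,\infty)$ and attains its maximum at $t=0$, hence is non-increasing. This is exactly where $1$-log-concavity enters. From this monotonicity, a one-dimensional sign-crossing argument shows that $r \mapsto \int_0^r (\alpha\varphi(t\theta) - e^{-t^2/2})\,t^{\ell-1}\,dt \geq 0$ for the normalizing $\alpha$, i.e.\ the radial mass of $\nu$ dominates that of $\gamma_\ell$ in every direction; integrating over $\theta$ finishes. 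The point you missed is that the mode works as a \emph{universal} center for all radii at once, so no Helly-type patching is needed.
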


\begin{proof} Let $\vphi$ be the log-concave density of $\nu$ with respect to the Lebesgue measure on $\RR^\ell$.
Modifying $\vphi$ on a set of measure zero,
we may assume that $\vphi$ is upper semi-continuous (e.g., \cite[Section 6]{roc}).
The function $\vphi$ goes to zero exponentially fast at infinity  (e.g., \cite[Lemma 2.2.1]{Gian}). We conclude that there exists $x_0 \in \RR^\ell$ such that
\begin{equation}  \vphi(x_0) = \sup_{x \in \RR^\ell} \vphi(x). \label{eq_1043} \end{equation}
Translating, we may assume that $x_0 = 0$.
Fix a unit vector $\theta \in S^{\ell-1}$ for which $t \mapsto \vphi(t \theta)$ does not vanish identically in $(0, \infty)$.
We claim that the function $g(t) = \vphi(t \theta) e^{t^2/2}$ is non-increasing in $[0, \infty)$. Indeed, $g$ is log-concave and hence
$t \mapsto (g(t) / g(0))^{1/t}$ is non-increasing. Therefore, for any $t > 0$,
$$ \left( \frac{g(t)}{g(0)} \right)^{1/t} \leq \liminf_{s \rightarrow 0^+} \left( \frac{g(s)}{g(0)} \right)^{1/s} =
\liminf_{s \rightarrow 0^+} \left( \frac{\vphi(s \theta) }{ \vphi(0) } \right)^{1/s} \leq 1. $$
Consequently $g$ is a log-concave function on $[0, \infty)$ attaining its maximum at the origin, hence it is non-increasing.
Let $\alpha > 0$ be such that
$$ \int_0^{\infty} \left( \alpha \vphi(t \theta) - e^{-t^2/2} \right) t^{\ell-1} dt = 0. $$
Since $g(t) = \vphi(t \theta) e^{t^2/2}$ is non-increasing in $[0, +\infty)$, there exists $t_0 \in [0, +\infty]$ with
$\alpha \vphi(t \theta) \geq \exp(-t^2/2)$ if and only if $t \leq t_0$. We deduce that the function
\begin{equation} r \mapsto \int_0^r \left( \alpha \vphi(t \theta) - e^{-t^2/2} \right) t^{\ell-1} dt \qquad \qquad (r \geq 0) \label{eq_1652} \end{equation}
is non-decreasing in $[0, t_0]$ and non-increasing in $[t_0, +\infty)$. The function in (\ref{eq_1652}) vanishes at zero and infinity,
and consequently it is a non-negative function in $[0, +\infty)$. Thus, for any $r \geq 0$,
\begin{equation} \int_0^{r}  \vphi(t \theta) t^{\ell-1} dt \geq \alpha^{-1} \int_0^{r}  e^{-t^2/2} t^{\ell-1} dt = \gamma_\ell(r B^\ell) \cdot \int_0^{\infty}  \vphi(t \theta) t^{\ell-1} dt.  \label{eq_1704_}
\end{equation}
By integrating (\ref{eq_1704_}) over $\theta \in S^{\ell-1}$, we obtain $\nu( r B^\ell) \geq \gamma_\ell( r B^\ell)$, as desired.
\end{proof}

Thanks to Theorem~\ref{prop_941} we may generalize  Lemma~\ref{lem_1724} as follows:

\begin{theorem}
Let $\mu$ be a  probability measure that is $1$-log-concave in $\RR^n$ and let $0 \leq \ell \leq n$. Assume that  $f: \RR^n \rightarrow \RR^{\ell}$ is continuous. Then there exists $t \in \RR^\ell$
such that for all $r  > 0$,
\begin{equation}   \mu( f^{-1}(t) + r B^n) \geq \gamma_{\ell}( r B^{\ell} ).
\label{eq_956} \end{equation}
\label{thm_gwaist2}
\end{theorem}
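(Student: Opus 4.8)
The plan is to invoke Theorem~\ref{prop_941} with the convex body $V = B^n$ and the profile function $I(r) = \gamma_{\ell}(r B^{\ell})$, which is continuous on $[0,\infty)$ and takes values in $[0,1]$; in the case $\mu = \gamma_n$ this recovers Theorem~\ref{thm_gwaist}. Two hypotheses must be checked: that the Lebesgue density $\vphi$ of $\mu$ is log-concave, and that it has the $(B^n, \ell, I)$-peak property of Definition~\ref{def_1121}. Log-concavity of $\vphi$ is immediate: since $\mu$ is $1$-log-concave in $\RR^n$, we have $\vphi = g_n \cdot \rho$, where $g_n(x) = (2\pi)^{-n/2} e^{-|x|^2/2}$ is the standard Gaussian density and $\rho = d\mu/d\gamma_n$ is log-concave, so $\vphi$ is a product of log-concave functions.

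For the peak property, fix an affine $\ell$-dimensional subspace $E \subseteq \RR^n$ and a log-concave $\psi: E \to [0, +\infty)$ with $\int_E \vphi \psi = 1$, and let $\nu$ be the probability measure on $E$ whose density with respect to the Lebesgue measure on $E$ is $\vphi\psi$. Write $E = p + E_0$ with $E_0$ a linear subspace and $p \perp E_0$. For $x = p + y \in E$ with $y \in E_0$, the Pythagorean theorem gives $|x|^2 = |p|^2 + |y|^2$, so the restriction of $g_n$ to $E$ is a positive constant multiple of the density of $\gamma_E$. Consequently $d\nu/d\gamma_E$ equals a positive constant times $(\rho|_E)\cdot\psi$, a product of log-concave functions and hence log-concave; that is, $\nu$ is $1$-log-concave in $E$. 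Identifying $E$ isometrically with $\RR^\ell$ — under which $\gamma_E$ becomes a translate of the standard Gaussian $\gamma_\ell$ and Euclidean balls of $E$ become Euclidean balls of $\RR^\ell$ — Lemma~\ref{lem_1724} produces a point $x_0 \in E$ with $\nu(x_0 + r B_E) \geq \gamma_\ell(r B^\ell)$ for all $r \geq 0$, where $B_E$ is the unit ball of $E$. Since $x_0 \in E$ and $E$ is affine, $E \cap (x_0 + r B^n) = x_0 + r B_E$, so $\int_{E \cap (x_0 + r B^n)} \vphi\psi = \nu(x_0 + r B_E) \geq I(r)$ for all $r \geq 0$, which is precisely the peak property.

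With both hypotheses in hand, Theorem~\ref{prop_941} applied to the continuous map $f$ yields $t \in \RR^\ell$ with $\mu(f^{-1}(t) + r B^n) \geq I(r) = \gamma_\ell(r B^\ell)$ for all $r > 0$, establishing (\ref{eq_956}); the degenerate cases $\ell = 0$ and $\ell = n$ need no separate argument, being already absorbed into Theorem~\ref{prop_941}. There is no genuine difficulty here. The only point that demands care is the bookkeeping of the second paragraph: that restricting a $1$-log-concave measure to an affine subspace $E$ yields a measure that is again $1$-log-concave \emph{on $E$}, which rests on the Pythagorean splitting of the Gaussian weight, together with the routine verification that Lemma~\ref{lem_1724} transfers verbatim to a general affine subspace via an isometric identification with $\RR^\ell$.
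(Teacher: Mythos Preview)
Your proof is correct and follows essentially the same route as the paper: verify the $(B^n,\ell,I)$-peak property for $I(r)=\gamma_\ell(rB^\ell)$ by showing that the restricted measure $\nu$ on any affine $\ell$-plane $E$ is $1$-log-concave in $E$, then invoke Lemma~\ref{lem_1724} and conclude via Theorem~\ref{prop_941}. Your explicit use of the Pythagorean splitting $|x|^2=|p|^2+|y|^2$ to identify the restriction of the Gaussian weight with a constant multiple of $\gamma_E$ is exactly the point the paper leaves implicit when it asserts that $\nu$ is $1$-log-concave in $E$.
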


\begin{proof} We apply Theorem~\ref{prop_941} with $V = B^n$
and $I(r) = \gamma_{\ell}( r B^{\ell} )$. The desired inequality (\ref{eq_956})
would follow once we verify that the log-concave density $\vphi$ of the measure $\mu$ satisfies
the $(V, \ell, I)$-peak property. Since $\mu$ is $1$-log-concave in $\RR^n$, the function
$$ x \mapsto \vphi(x) e^{|x|^2/2} $$
is log-concave. Let $E \subseteq \RR^n$ be an affine $\ell$-dimensional subspace
and let $\psi$ be a log-concave function on $E$ with $\int_E \vphi \psi = 1$.
Write $\nu$ for the probability measure on $E$ whose density in $E$ equals to  the product $\vphi \psi$.
Since $\psi$ is log-concave
and the function $ x \mapsto \vphi(x) e^{|x|^2/2} $ is log-concave as well, the probability measure $\nu$ is $1$-log-concave in the $\ell$-dimensional affine subspace $E$.
By Lemma~\ref{lem_1724}, there exists $x_0 \in E$ for which
$$ \nu( x_0 + r B^{\ell}) \geq I(r) = \gamma_{\ell}( r B^{\ell} ) \qquad \text{for all}  \ r \geq 0. $$
We have thus verified the $(V, \ell, I)$-peak property, and the proof is complete.
\end{proof}

Theorem~\ref{thm_gwaist2} implies Theorem \ref{thm_gwaist} by substituting $\mu = \gamma_n$.
Let us now see how Theorem \ref{thm_1127_} follows from Theorem~\ref{thm_gwaist}.

\begin{proof}[Proof of Theorem~\ref{thm_1127_} (``waist of the cube'')] \label{trick}
  Let $\Phi(t) = \int_{-\infty}^{t} (2 \pi)^{-1/2} e^{-s^2/2} ds$. Then $\Phi$ pushes forward the standard Gaussian
measure on $\RR$ to the uniform measure on the interval $[0,1]$. Moreover, $\Phi$ is clearly an $L$-Lipschitz function for $L = 1/\sqrt{2 \pi}$. Set,
$$ G(x_1,\ldots,x_n) = \left( \Phi(x_1),\ldots, \Phi(x_n) \right). $$
Then $G: \RR^n \rightarrow (0,1)^n$ is a homeomorphism, pushing $\gamma_n$ forward to $\lambda_n$, where $\lambda_n$ is the uniform measure on $(0,1)^n$. Moreover,
$G$ is  an $L$-Lipschitz function for $L = 1/\sqrt{2 \pi}$. In particular, denoting $h = f \circ G$,
\begin{equation} G(h^{-1}(t) + \eps B^n) \subseteq  f^{-1}(t) + \frac{\eps}{\sqrt{2 \pi}} B^n  \qquad \qquad (t \in \RR^\ell, \eps > 0). \label{eq_1727} \end{equation}
Since $G_*(\gamma_n) = \lambda_n$, it follows from (\ref{eq_1727}) that $$ \gamma_n \left \{ h^{-1}(t) + \eps B^n \right \}  \leq \lambda_n \left \{ f^{-1}(t) + (\eps / \sqrt{2 \pi}) B^n \right \}. $$
Applying Theorem
\ref{thm_gwaist} for the continuous function $h = f \circ G$, we find $t \in \RR^\ell$
such that
$$
 \frac{\lambda_n \left( f^{-1}(t) + \eps B^n \right)}{\beta_{\ell} \eps^{\ell}} \geq \frac{\gamma_n \left( h^{-1}(t) + \sqrt{2 \pi} \cdot \eps B^n \right)}{\beta_{\ell} \eps^{\ell}}
 \geq \frac{\gamma_{\ell}
\left(\sqrt{2 \pi} \cdot \eps  B^{\ell} \right)}{\beta_{\ell} \eps^{\ell}} \stackrel{\eps \rightarrow 0^+} \longrightarrow 1. $$
Therefore  $Vol_{n-\ell}^*(f^{-1}(t)) \geq 1$.
\end{proof}

\begin{corollary} Let $\ell \geq 1$ and let $0 < \lambda_1 \leq \lambda_2 \leq \ldots \leq \lambda_n$. Consider the box $Q = (0, \lambda_1) \times \ldots \times (0, \lambda_n)$.
Let $f: Q \rightarrow \RR^\ell$ be a real-analytic  map.
Then there exists $t \in \RR^\ell$ with $$ Vol_{n-\ell}(f^{-1}(t)) \geq \prod_{j = 1}^{n-\ell} \lambda_j. $$ \label{cor_932}
\end{corollary}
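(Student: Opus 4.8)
The plan is to deduce the statement from the waist inequality for the unit cube (Theorem~\ref{thm_1127_}) by an anisotropic rescaling, the only delicate point being the way $(n-\ell)$-dimensional volume transforms under that rescaling. Write $k = n - \ell$ and let $A : \RR^n \rightarrow \RR^n$ be the diagonal linear map $A(x_1,\ldots,x_n) = (\lambda_1 x_1,\ldots,\lambda_n x_n)$, which maps $(0,1)^n$ onto $Q$ with $\det A = \prod_{j=1}^{n}\lambda_j$. First I would set $g = f \circ A : (0,1)^n \rightarrow \RR^\ell$; this map is real-analytic, in particular continuous, so Theorem~\ref{thm_1127_} supplies $t \in \RR^\ell$ with $Vol^*_{k}(g^{-1}(t)) \geq 1$. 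Since $g$ is real-analytic, the fiber $S := g^{-1}(t)$ is a real-analytic subset of $(0,1)^n$, hence countably $k$-rectifiable (the case of infinite $k$-dimensional measure being trivial in what follows), and by the identification of lower Minkowski volume with Hausdorff volume for real-analytic fibers (see the discussion following Theorem~\ref{thm_section}, and \cite{AFP,Krantz}) its $k$-dimensional Hausdorff measure satisfies $\cH^{k}(S) = Vol^*_{k}(S) \geq 1$.

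Next I would transport $S$ through $A$. Since $A$ restricts to a bijection from $(0,1)^n$ onto $Q \supseteq f^{-1}(t)$, we have $f^{-1}(t) = A(S)$, and $A$ being linear and injective, $A(S)$ is again countably $k$-rectifiable. The area formula for Lipschitz maps on rectifiable sets then gives
\begin{equation}
Vol_{k}\bigl(f^{-1}(t)\bigr) \;=\; \cH^{k}\bigl(A(S)\bigr) \;=\; \int_{S} J_{k}^{S}A \; d\cH^{k}, \label{eq_cor_area_2158}
\end{equation}
where, for $\cH^{k}$-almost every $x \in S$, the quantity $J_{k}^{S}A$ is the $k$-dimensional Jacobian of the restriction of $A$ to the approximate tangent $k$-plane $T_{x}S$, that is, the factor by which $A$ dilates $k$-dimensional volume along that plane; the injectivity of $A$ is what makes the multiplicity function in the area formula identically $1$ on $A(S)$. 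Passing to Hausdorff measure is essential here: estimating $Vol^*_{k}(A(S))$ directly, by thickening $A(S)$ with an inscribed Euclidean ball, yields only the far weaker constant $\lambda_{1}^{k}$, whereas (\ref{eq_cor_area_2158}) retains the pointwise information on $J_{k}^{S}A$.

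It then remains to prove the purely linear-algebraic bound $J_{k}^{S}A \geq \prod_{j=1}^{k}\lambda_j$, valid for $A$ restricted to any $k$-dimensional subspace $P \subseteq \RR^n$: if $w_1,\ldots,w_k$ is an orthonormal basis of $P$, then the $k$-Jacobian of $A|_P$ equals $|A w_1 \wedge \cdots \wedge A w_k| = |(\Lambda^{k}A)(w_1 \wedge \cdots \wedge w_k)|$, and $\Lambda^{k}A$ is a symmetric positive-definite operator on $\Lambda^{k}\RR^n$ whose eigenvalues are the products $\prod_{j \in I}\lambda_j$ over $k$-element subsets $I \subseteq \{1,\ldots,n\}$; since $\lambda_1 \leq \cdots \leq \lambda_n$ its smallest eigenvalue equals $\prod_{j=1}^{k}\lambda_j$, while $w_1 \wedge \cdots \wedge w_k$ is a unit vector of $\Lambda^{k}\RR^n$. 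Feeding $J_{k}^{S}A \geq \prod_{j=1}^{k}\lambda_j$ into (\ref{eq_cor_area_2158}) gives
\[
Vol_{n-\ell}\bigl(f^{-1}(t)\bigr) \;\geq\; \Bigl(\prod_{j=1}^{n-\ell}\lambda_j\Bigr)\,\cH^{n-\ell}(S) \;\geq\; \prod_{j=1}^{n-\ell}\lambda_j,
\]
which is the claim. The one step demanding genuine care, rather than routine bookkeeping, is precisely this transformation of $k$-dimensional volume under the anisotropic map $A$: obtaining the sharp constant $\prod_{j=1}^{k}\lambda_j$ — and not merely $\lambda_1^{k}$ — forces the combined use of the rectifiability of the real-analytic fiber, the area formula, and the subspace-uniform lower bound on the $k$-Jacobian above.
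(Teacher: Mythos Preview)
Your proof is correct and follows essentially the same route as the paper: rescale to the unit cube via the diagonal map $A$, invoke Theorem~\ref{thm_1127_}, identify lower Minkowski content with Hausdorff measure on the real-analytic fiber, and push forward using the bound $J_k A \geq \prod_{j=1}^{k}\lambda_j$. The only cosmetic difference is that the paper applies the ordinary change-of-variables formula on the smooth $(n-\ell)$-dimensional manifold part of the fiber (splitting off the higher-dimensional case as trivial), whereas you use the area formula on the rectifiable set and spell out the Jacobian bound via $\Lambda^{k}A$.
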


The estimate of Corollary~\ref{cor_932} is clearly tight, as shown by the example of linear functions.
Note that $f$ from Corollary~\ref{cor_932} is assumed to be real-analytic and not merely continuous.
Thus, any set of the form $f^{-1}(t)$ is a finite union of smooth manifolds, and in particular $Vol^*_{n-\ell}(f^{-1}(t))$
is equal to the $(n-\ell)$-dimensional Hausdorff measure of the set $f^{-1}(t)$,
denoted by $Vol_{n-\ell}(f^{-1}(t))$.

\begin{proof}[Proof of Corollary~\ref{cor_932}] Denote $T(x_1,\ldots,x_n) = (\lambda_1 x_1, \ldots, \lambda_n x_n)$.
It follows from the change-of-variables formula that  for any $(n-\ell)$-dimensional manifold $M \subseteq (0,1)^n$,
\begin{equation} Vol_{n-\ell}(T(M)) \geq Vol_{n-\ell}(M) \cdot \prod_{j = 1}^{n-\ell} \lambda_j. \label{eq_1707} \end{equation}
By Theorem~\ref{thm_1127_}, the function $h = f \circ T$ has a fiber $h^{-1}(t)$ with $Vol_{n-\ell}^*( h^{-1}(t) ) \geq 1$.
Thus the set $h^{-1}(t)$ is  a real-analytic variety of dimension at least $n-\ell$. The dimension of $f^{-1}(t) = T( h^{-1}(t) )$ equals to that of $h^{-1}(t)$. If this dimension is larger than $n-\ell$,
then the desired conclusion is trivial. Otherwise, there is a smooth $(n-\ell)$-dimensional manifold $M \subseteq h^{-1}(t)$ with
\begin{equation} Vol_{n-\ell}(M) = Vol_{n-\ell} (h^{-1}(t)) = Vol_{n-\ell}^* (h^{-1}(t)) \geq 1. \label{eq_1707_} \end{equation}
Since $T(M) \subseteq f^{-1}(t)$, the conclusion follows from (\ref{eq_1707}) and (\ref{eq_1707_}).
\end{proof}

We conjecture that the conclusion of Corollary~\ref{cor_932} holds true for all continuous maps $f: Q \rightarrow \RR$,
with $Vol_{n-\ell}$ replaced by $Vol_{n-\ell}^*$.
We move on to the proof of Theorem~\ref{thm_section}.
The following two lemmas are needed for the verification of the corresponding $(V, \ell, I)$-peak property.

\begin{lemma} Let $K, V \subseteq \RR^n$ be convex bodies such that $\int_V x dx  =0$.
Let  $\vphi$ be a log-concave probability density supported on $K$. Write $x_0 = \int_K x \vphi(x) dx$ for the barycenter of $\vphi$.
Then for all $r > 0$,
\begin{equation}  \frac{1}{r^n \cdot Vol_n(V)} \int_{K \cap (x_0 + r V)} \vphi \geq \frac{1}{Vol_n(K - r V)}. \label{eq_1730}
\end{equation}
\label{lem_1053_}
\end{lemma}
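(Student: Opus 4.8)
The idea is to reduce (\ref{eq_1730}) to a general fact about log-concave densities and then prove that fact by a short pointwise estimate integrated in polar coordinates. Translating, we may assume $x_0=0$. Define
\[
 h(w)\;=\;\frac{1}{Vol_n(rV)}\int_{w+rV}\vphi \qquad (w\in\RR^n),
\]
so that $h$ is the density of $X-rW$, where $X$ has density $\vphi$ and $W$ is independent of $X$ and uniform on $V$; equivalently $h=\vphi\ast(\mathbf{1}_{-rV}/Vol_n(rV))$. Then $h$ is a log-concave probability density by the Pr\'ekopa--Leindler inequality, $\mathrm{supp}(h)\subseteq K-rV$, and the barycenter of $h$ equals $\EE[X]-r\,\EE[W]=x_0-r\int_V x\,dx/Vol_n(V)=0$, using both hypotheses $\int_V x\,dx=0$ and $x_0=0$. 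Moreover $h(0)=\frac{1}{r^n Vol_n(V)}\int_{K\cap rV}\vphi$ is exactly the left-hand side of (\ref{eq_1730}). Hence it suffices to prove the general statement: \emph{if $h$ is a log-concave probability density on $\RR^n$ whose barycenter is the origin, then $h(0)\cdot Vol_n(\mathrm{supp}\,h)\ge 1$.}

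To prove this, let $P=\mathrm{supp}(h)$, a convex body with $0\in\mathrm{int}(P)$. Since $\log h$ is concave and finite near $0$, it has a supergradient $\xi\in\RR^n$ at $0$, so $h(x)\le h(0)\,e^{\langle\xi,x\rangle}$ for all $x$. Combined with the elementary bound $(1-u)e^{u}\le 1$ (valid for all $u$, as the left side is maximized at $u=0$), this gives, distinguishing the cases $\langle\xi,x\rangle\le 1$ and $\langle\xi,x\rangle>1$, the pointwise estimate
\[
 h(x)\,\bigl(1-\langle\xi,x\rangle\bigr)\;\le\;h(0)\qquad\text{for all }x\in\RR^n .
\]

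It remains to integrate this over $P$. In polar coordinates about the origin, each $x\in P\setminus\{0\}$ is uniquely $x=t\omega$ with $\omega\in\partial P$, $t=\|x\|_P\in(0,1]$, and $dx=t^{n-1}\,dt\,d\varrho(\omega)$, where $\varrho$ is the cone measure on $\partial P$, of total mass $\varrho(\partial P)=n\,Vol_n(P)$. Rewriting the estimate at $x=t\omega$ as $h(0)-h(t\omega)\ge -t\,\langle\xi,\omega\rangle\,h(t\omega)$, multiplying by $t^{n-1}$, and integrating first in $t\in(0,1]$ and then in $\omega\in\partial P$, the left-hand side becomes $h(0)\,Vol_n(P)-\int_P h\,dx=h(0)\,Vol_n(P)-1$, while the right-hand side equals $-\langle\xi,\,\int_P x\,h(x)\,dx\rangle=0$ since the barycenter of $h$ is the origin. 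Thus $h(0)\,Vol_n(P)\ge 1$; as $P=\mathrm{supp}(h)\subseteq K-rV$ this yields $h(0)\ge 1/Vol_n(P)\ge 1/Vol_n(K-rV)$, which is (\ref{eq_1730}). The one substantive point is obtaining the sharp constant $1$: a cruder argument via Gr\"unbaum-type inequalities would lose a factor $e^{-n}$, and it is precisely the interplay of the supergradient bound, the inequality $(1-u)e^u\le 1$, and the exact cancellation $\int_P x\,h(x)\,dx=0$ that removes this loss; the polar-coordinate bookkeeping is routine.
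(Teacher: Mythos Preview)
Your proof is correct and the reduction step is exactly the paper's: form the convolution $h=\vphi\ast\frac{1_{-rV}}{Vol_n(rV)}$, note it is log-concave by Pr\'ekopa--Leindler, supported in $K-rV$, has barycenter $x_0$, and its value at $x_0$ is the left-hand side of (\ref{eq_1730}). The only divergence is in the proof of the general fact $h(x_0)\cdot Vol_n(\mathrm{supp}\,h)\ge 1$ (Spingarn's inequality). The paper observes that $g=1/h=e^{-\log h}$ is convex on $\{h>0\}$ and applies Jensen's inequality directly:
\[
\frac{1}{h(x_0)}=g\Bigl(\int x\,h(x)\,dx\Bigr)\le\int g(x)\,h(x)\,dx=\int_{\{h>0\}}1\,dx\le Vol_n(K-rV),
\]
which is a two-line argument. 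Your route via the supergradient bound $h(x)\le h(0)e^{\langle\xi,x\rangle}$, the elementary inequality $(1-u)e^u\le 1$, and the cancellation $\int xh(x)\,dx=0$ reaches the same conclusion but is longer; note also that the polar-coordinate packaging is inessential, since integrating $h(0)-h(x)\ge-\langle\xi,x\rangle h(x)$ directly over $P$ already gives $h(0)Vol_n(P)-1\ge-\langle\xi,\int_P xh(x)\,dx\rangle=0$. The Jensen argument is slicker and avoids the need to justify the existence of a supergradient at the barycenter, but your argument is perfectly valid.
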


\begin{proof} The barycenter of $\tilde{V} = -V$ lies at the origin. The point $x_0 \in K$ is the barycenter of the probability density $\vphi$. Hence $x_0$ is also the barycenter of the probability density
$$ \vphi_r = \vphi * \frac{1_{r \tilde{V}}}{r^n \cdot Vol_n(V)}, $$
which is the convolution of $\vphi$ with the indicator function of the convex set $r \tilde{V}$, normalized to be a probability density.
Note that $\vphi_r(x_0)$ equals to the left-hand side of (\ref{eq_1730}).
The probability
density $\vphi_r$ is log-concave by Pr\'ekopa-Leindler and it is supported in $K - r V$. Now (\ref{eq_1730}) follows from an inequality of Spingarn \cite{spingarn},
according to which
\begin{equation}  \vphi_r(x_0) \geq \frac{1}{Vol_n(K - r V)} \qquad \qquad \qquad (r > 0). \label{eq_1753} \end{equation}
We cannot resist providing the standard short  proof of (\ref{eq_1753}): The function $g_r = 1/\vphi_r$ is convex
in $\Omega_r = \{ x \in K - r V \, ; \, \vphi_r(x) > 0 \}$, since $\vphi_r$ is log-concave. By Jensen's inequality
$$ g_r(x_0) = g_r \left( \int_{\Omega_r} x \vphi_r(x) dx \right) \leq \int_{\Omega_r} g_r(x) \vphi_r(x) dx = Vol_n(\Omega_r) \leq Vol_n(K - r V), $$
and (\ref{eq_1753}) is proven.
\end{proof}

\begin{lemma} Let $R > 0$ and assume that $K \subseteq R B^n$ is a convex body.
Let  $\nu$ be a probability measure supported on $K$ with a log-concave density. Write $x_0 = \int_K x d \nu(x)$.
Then for all $0 \leq r \leq 1$,
\begin{equation}  \nu (x_0 + r B^n)  \geq \frac{\beta_n \cdot r^n}{Vol_n(K) + C_{n,R} \cdot r}, \label{eq_1730_}
\end{equation}
where $C_{n,R} > 0 $ is a constant depending solely on $n$ and $R$. \label{lem_1053}
\end{lemma}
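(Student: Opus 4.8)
The plan is to apply Lemma~\ref{lem_1053_} with the choice $V = B^n$, whose barycenter lies at the origin. Writing $\vphi = d\nu/dx$ for the log-concave probability density of $\nu$ (so that $x_0 = \int_K x \vphi(x)\,dx$ is its barycenter), Lemma~\ref{lem_1053_} gives, for every $r > 0$,
$$ \nu(x_0 + r B^n) = \int_{K \cap (x_0 + r B^n)} \vphi \; \ge \; \frac{r^n \beta_n}{Vol_n(K - r B^n)} = \frac{\beta_n \, r^n}{Vol_n(K + r B^n)}, $$
where in the last equality we used $-B^n = B^n$, so that $K - r B^n = K + r B^n$ as Minkowski sums. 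It therefore remains to show that
$$ Vol_n(K + r B^n) \le Vol_n(K) + C_{n,R} \cdot r \qquad \text{for } 0 \le r \le 1, $$
with $C_{n,R}$ depending only on $n$ and $R$; this is the only place where the containment $K \subseteq R B^n$ enters.

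For the latter bound I would invoke the Steiner formula: since $K$ is a convex body, $r \mapsto Vol_n(K + r B^n)$ is a polynomial of degree $n$ in $r$ whose coefficients are all non-negative, with constant term $Vol_n(K)$. Hence for $0 \le r \le 1$ one has $r^j \le r$ for every $j \ge 1$, so that
$$ Vol_n(K + r B^n) - Vol_n(K) \le r \cdot \left( Vol_n(K + B^n) - Vol_n(K) \right) \le r \cdot Vol_n\left( (R+1) B^n \right) = (R+1)^n \beta_n \cdot r, $$
using $K \subseteq R B^n$ and hence $K + B^n \subseteq (R+1) B^n$. Thus one may take $C_{n,R} = (R+1)^n \beta_n$. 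Combining this with the lower bound from Lemma~\ref{lem_1053_} displayed above (and noting that enlarging the denominator only decreases the fraction) yields (\ref{eq_1730_}).

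There is essentially no obstacle here: all the substance sits in Lemma~\ref{lem_1053_}, i.e. in Spingarn's inequality, and what remains is the elementary observation that the Steiner polynomial grows at most linearly on $[0,1]$ once $K$ is confined to a fixed ball. If one prefers not to cite the Steiner formula, the same linear estimate follows from the convexity and monotonicity of $r \mapsto Vol_n(K + r B^n)$ on $[0, \infty)$, via $Vol_n(K + r B^n) \le (1-r)\,Vol_n(K) + r\,Vol_n(K + B^n)$ for $r \in [0,1]$, together with $Vol_n(K+B^n) \le (R+1)^n \beta_n$.
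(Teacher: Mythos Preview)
Your proof is correct and follows essentially the same route as the paper: first invoke Lemma~\ref{lem_1053_} with $V=B^n$ to reduce matters to a linear upper bound on $Vol_n(K+rB^n)$ for $r\in[0,1]$, then obtain that bound from the Steiner polynomial and the inclusion $K\subseteq R B^n$. The only cosmetic difference is that the paper compares the intrinsic volumes of $K$ termwise with those of $R B^n$ (monotonicity of mixed volumes), whereas you use $r^j\le r$ on $[0,1]$ together with $K+B^n\subseteq (R+1)B^n$; both yield a constant $C_{n,R}$ of the same order and the arguments are interchangeable.
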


\begin{proof} For any convex set $K \subseteq R B^n$ and $t > 0$, by the monotonicity of mixed volumes (e.g., Schneider \cite{schneider})
we have the following inequality:
\begin{equation}
Vol_n(K + t B^n) - Vol_n(K) \leq Vol_n( R B^n + t B^n ) - Vol_n(R B^n). \label{eq_1706}
\end{equation}
To see this, expand $Vol_n(K + t B^n)$ as a polynomial in $t$, and observe that the coefficients
of this polynomial -- the intrinsic volumes -- are bounded by the corresponding coefficients
of the polynomial $Vol_n(R B^n + t B^n)$. Curiously, when $n=2$ and $t$ tends to zero, inequality (\ref{eq_1706})
amounts to the Archimedes postulate on convex curves.
It follows from (\ref{eq_1706}) that for a certain constant $C_{n, R} > 0$,
\begin{equation}  Vol_n(K + r B^n) \leq Vol_n(K) + C_{n,R} \cdot r \qquad \text{for all} \ \ 0 \leq r \leq 1. \label{eq_1714}
\end{equation}
Now (\ref{eq_1730_}) follows from (\ref{eq_1714}) and Lemma~\ref{lem_1053_}.
\end{proof}

\begin{proof}[Proof of Theorem~\ref{thm_section}]
Abbreviate $M =
\sup_{F \in AG_{n, \ell}} Vol_{\ell}(K \cap F)$.
Let $R > 0$ satisfy $K \subseteq R B^n$.
For $0 \leq r \leq 1$  set
$$ I(r) = \min \left \{ 1, \frac{\beta_\ell r^\ell}{M + C_{\ell,R} \cdot r} \right \}, $$
where $C_{\ell, R}$ is the constant from Lemma~\ref{lem_1053}.
For $r > 1$ we set $I(r) := I(1)$. Thus $I: [0, \infty) \rightarrow [0, 1]$ is a continuous function with
\begin{equation}  \lim_{r \rightarrow 0^+} \frac{I(r)}{\beta_\ell \cdot r^\ell} = \frac{1}{M}.
\label{eq_1332} \end{equation}
Assume that $f: K \rightarrow \RR$ is a continuous function.
Let $0 < \eps < 1$, and let $K_0$ be a convex
body contained in the interior of $K$ with
$$  Vol_n(K_0) \geq (1 - \eps) \cdot Vol_n(K). $$
Let $\vphi = 1_{K_0} / Vol_n(K_0)$ be the uniform probability density on the convex body $K_0$ and set $V = B^n$.
We would like to verify that $\vphi$ satisfies  the $(V, \ell, I)$-peak property.
Suppose that we are given an $\ell$-dimensional affine subspace $E \subseteq \RR^n$ and a log-concave function $\psi: E \rightarrow [0, +\infty)$
with $\int_E {\vphi \cdot \psi} = 1$. Write $\nu$ for the probability measure on $E$ with density $$ \vphi \cdot \psi \cdot 1_E = \frac{1_{K_0 \cap E} \cdot \psi}{Vol_n(K_0)}. $$
Then $\nu$ is a log-concave measure in the affine subspace $E$, which is in fact supported in $K_0 \cap E$.
According to Lemma~\ref{lem_1053}, for some $x_0 \in E$,
$$  \nu( x_0 + r B^n) \geq \frac{\beta_\ell r^\ell}{Vol_\ell(K_0 \cap E) + C_{\ell,R} \cdot r} \geq I(r)
\qquad \text{for all} \ 0 \leq r \leq 1. $$
In fact, $\nu(x_0 + r B^n) \geq \nu(x_0 + B^n) \geq I(1) = I(r)$ also for $r > 1$. We have thus verified the $(V, \ell, I)$-peak property of $\vphi$.
By the Tietze extension theorem, we may extend the given continuous function $f: K \rightarrow \RR^{\ell}$
to a continuous function $f: \RR^n \rightarrow \RR^{\ell}$. We may now apply Theorem~\ref{prop_941} and conclude that
for a certain point $t \in \RR^{\ell}$,
\begin{equation}  Vol_n \left \{ K_0 \cap ( f^{-1}(t) + r B^n ) \right \} \geq I(r) \cdot Vol_n(K_0) \qquad \qquad \text{for all} \ r > 0. \label{eq_1337}
\end{equation}
From (\ref{eq_1332}) and (\ref{eq_1337}),
\begin{align*}  & Vol_{n- \ell}^*  ( K \cap f^{-1}(t))  = \liminf_{r \rightarrow 0^+} \frac{Vol_n \left \{ (K \cap  f^{-1}(t)) + r B^n  \right \}}{\beta_{\ell} \cdot r^{\ell}}
\\ & \phantom{bla} \geq \liminf_{r \rightarrow 0^+} \frac{Vol_n \left \{ K_0 \cap  (f^{-1}(t) + r B^n)  \right \}}{\beta_{\ell} \cdot r^{\ell}}  \geq \frac{Vol_n(K_0)}{M } \geq (1 - \eps) \frac{Vol_n(K)}{M }.
\end{align*}
Since $\eps$ was arbitrary, we see that $\sup_{t \in \RR^\ell} Vol_{n-\ell}^*(K \cap f^{-1}(t)) \geq Vol_n(K) / M$, as desired.
\end{proof}

Recall that the $\ell$-waist of a convex body $K \subseteq \RR^n$ is defined via
$$ w_{\ell}(K) = \inf_{f: K \rightarrow \RR^{n-\ell}} \sup_{t \in \RR^{n-\ell}}
\left( Vol^*_{\ell} (f^{-1}(t) \right)^{1/\ell}, $$
where the infimum runs over all continuous functions  $f: K \rightarrow \RR^{n-\ell}$.

\begin{proposition} Let $K \subseteq \RR^n$ be a convex body, let $1 \leq k \leq n$ and let $E \subset \RR^n$
be a linear subspace with $\dim(E) = k$. Then,
\begin{enumerate}
\item[(i)] $\displaystyle w_{\ell}(K) \leq w_{\ell}(Proj_E K) $  for $\ell=1,\ldots,k$.
\item[(ii)] If $K = -K$   then $w_{\ell}(K) \leq 2 \cdot w_{\ell}(K \cap E)$ for $\ell=1,\ldots,k$.
\end{enumerate} \label{prop_1336}
\end{proposition}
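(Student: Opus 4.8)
The plan is to prove both parts by the same device: slice $K$ by the translates of $E$, transport each slice into $Proj_E K$ (for (i)) or into $K \cap E$ (for (ii)), and then compose with a near-optimal test map on that lower-dimensional body. Write $\RR^n = E \oplus E^{\perp}$, with the corresponding decomposition $x = x_E + x_{\perp}$, and for $z \in Proj_{E^{\perp}}(K)$ set $K_z = \{ y \in E \, ; \, y + z \in K \}$, so that $K_0 = K \cap E$, $K_z \subseteq Proj_E K$ for all $z$, and $K = \bigcup_{z} (K_z + z)$.

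\emph{Part (i).} Given a continuous $g : Proj_E K \to \RR^{k-\ell}$, I would take $f : K \to \RR^{k-\ell} \times E^{\perp} \cong \RR^{n-\ell}$, $f(x) = (g(x_E), x_{\perp})$. Then for each $s,z$ the fiber $f^{-1}(s,z)$ is a translate inside $\RR^n$ of $g^{-1}(s) \cap K_z \subseteq g^{-1}(s)$. Since $Vol^*_{\ell}$ is translation invariant and monotone under inclusion in $\RR^n$, and since (this is the delicate point, see below) the lower Minkowski $\ell$-volume of a set sitting in a $k$-dimensional affine subspace is unaffected by viewing it inside $\RR^n$ rather than inside that subspace, one gets $Vol^*_{\ell}(f^{-1}(s,z)) \le Vol^*_{\ell}(g^{-1}(s))$ for all $s,z$, hence $\sup_t Vol^*_{\ell}(f^{-1}(t))^{1/\ell} \le \sup_s Vol^*_{\ell}(g^{-1}(s))^{1/\ell}$; taking the infimum over $g$ would give $w_{\ell}(K) \le w_{\ell}(Proj_E K)$.

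\emph{Part (ii).} Here the new ingredient is that for $K = -K$ the slices stay close to $K \cap E$: if $y, y' \in K_z$ then $y + z \in K$ and $-(y' + z) \in K$, so by convexity $\tfrac12(y - y') = \tfrac12\big[(y+z) + (-y'-z)\big] \in K$, and it lies in $E$; thus $\tfrac12(K_z - K_z) \subseteq K \cap E$, whence $K_z \subseteq a_z + 2(K \cap E)$ once a point $a_z \in K_z$ is fixed. I would choose $z \mapsto a_z$ continuously (e.g.\ by Michael's selection theorem for the convex-valued, lower semicontinuous map $z \mapsto K_z$, or as the barycenter of $K_z$), so that $\phi_z(y) := \tfrac12(y - a_z)$ maps $K_z$ into $K \cap E$ and is a similarity of ratio $\tfrac12$. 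Given a continuous $g : K \cap E \to \RR^{k-\ell}$, take $f : K \to \RR^{k-\ell} \times E^{\perp} \cong \RR^{n-\ell}$, $f(x) = (g(\phi_{x_{\perp}}(x_E)), x_{\perp})$. Then $f^{-1}(s,z)$ is a translate of $K_z \cap (2 g^{-1}(s) + a_z) \subseteq 2 g^{-1}(s) + a_z$, so, combining translation invariance, monotonicity, the homogeneity $Vol^*_{\ell}(2A) = 2^{\ell} Vol^*_{\ell}(A)$ and the same embedding-independence, $Vol^*_{\ell}(f^{-1}(s,z)) \le 2^{\ell} Vol^*_{\ell}(g^{-1}(s))$. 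Taking suprema and then the infimum over $g$ would give $w_{\ell}(K) \le 2\, w_{\ell}(K \cap E)$.

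The step I expect to be the main obstacle is the one invoked once in each part: that $Vol^*_{\ell}$ of a set contained in a $k$-dimensional affine subspace of $\RR^n$ equals the $Vol^*_{\ell}$ computed intrinsically within that subspace --- this is precisely what is needed for the sharp constants $1$ and $2$, rather than dimension-dependent ones. For real-analytic test maps it is immediate from the identification of $Vol^*_{\ell}$ with Hausdorff measure recalled in the introduction; in general I would establish it by writing the $\eps$-neighbourhood in $\RR^n$ of such a set as a Fubini integral over $E^{\perp}$ of dilates of the set taken inside $E$, and passing to the limit $\eps \to 0^+$. The remaining points --- continuity of $z \mapsto a_z$ and the behaviour of $K_z$ near the relative boundary of $Proj_{E^{\perp}}(K)$, the fact that $K \cap E$ is a genuine convex body when $K = -K$, and a Tietze extension of $f$ to $\RR^n$ if one prefers to quote the results of the previous sections directly --- are routine.
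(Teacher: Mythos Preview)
Your proposal is correct and matches the paper's proof almost exactly: the paper uses the same slicing map $x \mapsto (x_\perp, g(\cdot))$ for (i), and for (ii) it likewise selects a continuous section $b: P(K) \to K$ via Michael's theorem and proves the same inclusion $K_z \subseteq a_z + 2(K\cap E)$ (through a slightly different convex-combination computation). The embedding-independence of $Vol^*_\ell$ that you single out as the delicate point is indeed used in the paper but is passed over without comment, so your flagging it is warranted rather than a deviation.
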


\begin{proof} Begin with the proof of (i). Let $\eps > 0$. For any continuous function $f: Proj_E K \rightarrow \RR^{k - \ell}$ we set
$$ g(x) = \left(Proj_{E^{\perp}} x, f (Proj_E(x)) \right) \in
E^{\perp} \times \RR^{k-\ell} \cong \RR^{n-k} \times \RR^{k - \ell}. $$
Then $g: K \rightarrow \RR^{n-\ell}$ is continuous, and hence it has a large fiber $g^{-1}(t,s)$,
with $\ell$-dimensional Minkowski volume  at least $(w_{\ell}(K) - \eps)^{\ell}$.
This fiber is a subset of the $k$-dimensional affine subspace $(Proj_{E^{\perp}})^{-1}(t)$, and it is in fact contained in a translation of $f^{-1}(s)$. Therefore
$$ Vol_{\ell}^*(f^{-1}(s)) \geq Vol_{\ell}^*(g^{-1}(t,s)) \geq (w_{\ell}(K) - \eps)^{\ell}, $$ and conclusion (i) follows,
as $\eps > 0$ is arbitrary. We continue with the proof of (ii).
Abbreviate $P = Proj_{E^{\perp}}$.
For each boundary point $y \in \partial P(K)$ there is a point $ b_y \in K$ with
$$ P(b_y) = y. $$
By Michael's selection theorem, we may assume that $b: \partial P(K) \rightarrow K$ is continuous.
For $y \in P(K)$ let us set $b_y = \|y \| \cdot b_{y / \| y \|}$, where $\| \cdot \|$ is  the norm whose unit ball is $P(K)$.
Then $b: P(K) \rightarrow K$ is a continuous with $P(b_y) = y$ for all $y \in P(K)$.
We claim that for any $x \in K$,
\begin{equation}
x - b_{P(x)} \in \frac{K \cap E}{\lambda} \subseteq 2 (K \cap E),
\label{eq_947}
\end{equation}
where we set $\lambda = 1 / (1 + \| Px \|) \in [1/2,1]$.
Indeed, $x \in K$ and $-b_{Px}/\| Px \| \in K$, hence the point $z = \lambda(x - b_{Px}) = \lambda x - (1 - \lambda) b_{Px}/\|Px\|$ belongs to $K$.
However, $Pz = 0$ and therefore $z \in K \cap E$ and (\ref{eq_947}) follows.
Given a continuous function $f: K \cap E \rightarrow \RR^{k - \ell}$ let us denote
$$ g(x) = \left( P(x), f \left(  \frac{x - b_{Px}}{2} \right) \right) \in E^{\perp} \times \RR^{k-\ell} \cong \RR^{n-k} \times \RR^{k - \ell}. $$
The function $g: K \rightarrow \RR^{n- \ell}$ is a well-defined continuous function according to (\ref{eq_947}).
We continue as in the proof of (i): The function $g$ has a large fiber $g^{-1}(t,s)$, which is contained in a translation of
$2 f^{-1}(s)$. Hence for any $\eps > 0$ we find a fiber $f^{-1}(s)$ whose $\ell$-dimensional Minkowski volume is at least $(w_{\ell}(K) / 2 - \eps)^{\ell}$,
completing the proof of (ii).
\end{proof}

Suppose that $X$ is a finite-dimensional normed space and that $\mu$ is a log-concave probability measure supported
in its unit ball. The following theorem states that any continuous function $f: X \rightarrow \RR^{\ell}$ has a large fiber:

\begin{theorem} Let $K \subseteq \RR^n$ be a centrally-symmetric convex body, let $\ell=1,\ldots,n$,
let $\mu$ be a probability measure supported in $K$ with a log-concave density and let $f: K \rightarrow \RR^{\ell}$ be continuous.
Then there exists $t \in \RR^{\ell}$ such that
$$ \mu(f^{-1}(t)  + r K) \geq \left( \frac{r}{2+r} \right)^\ell \qquad \text{for all} \ 0 < r <  1. $$
\label{thm_1148}
\end{theorem}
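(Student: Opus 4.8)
The plan is to deduce Theorem~\ref{thm_1148} from Theorem~\ref{prop_941} by checking that the log-concave density $\vphi$ of $\mu$ has the $(V,\ell,I)$-peak property of Definition~\ref{def_1121} with the choices
$$ V = K \qquad \text{and} \qquad I(r) = \left( \frac{r}{2+r} \right)^\ell \qquad (r \geq 0). $$
This is a legitimate input: since $K = -K$ is a convex body the origin lies in its interior, and $I : [0,\infty) \to [0,1]$ is continuous with $I(0)=0$. As in the proof of Theorem~\ref{thm_section}, we first use the Tietze extension theorem to extend the given $f$ to a continuous map $f : \RR^n \to \RR^\ell$. Once the peak property is verified, Theorem~\ref{prop_941} yields a point $t \in \RR^\ell$ with $\mu(f^{-1}(t) + rK) \geq (r/(2+r))^\ell$ for \emph{all} $r>0$, which is stronger than the assertion for $0<r<1$.

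To verify the peak property, fix an affine $\ell$-dimensional subspace $E \subseteq \RR^n$ and a log-concave function $\psi : E \to [0,\infty)$ with $\int_E \vphi\psi = 1$, and let $\nu$ be the probability measure on $E$ whose density with respect to $\ell$-dimensional Lebesgue measure on $E$ equals $\vphi\psi$. The restriction of $\vphi$ to the affine subspace $E$ is log-concave, hence $\vphi\psi$ is log-concave on $E$ and $\nu$ is a log-concave probability measure on $E$; moreover $\nu$ is supported in $K \cap E$, since $\vphi$ is supported in $K$ (in particular $K \cap E$ has positive $\ell$-dimensional measure, because $\int_E \vphi\psi = 1$). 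Let $x_0 = \int_E x\, d\nu(x)$ be the barycenter of $\nu$; since $K \cap E$ is convex, $x_0 \in K \cap E$. Write $E = x_0 + E_0$ for a linear subspace $E_0 \subseteq \RR^n$ of dimension $\ell$. Using that $E_0$ is linear one checks directly that
$$ (x_0 + rK) \cap E = x_0 + r\left( K \cap E_0 \right) \qquad (r > 0), $$
so that $\nu\bigl(E \cap (x_0 + rK)\bigr) = \nu\bigl(x_0 + r(K \cap E_0)\bigr)$.

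The crucial step is a containment coming from the central symmetry of $K$: for any $v \in E_0$ with $x_0 + v \in K$ we have, by convexity and $K=-K$,
$$ \frac{v}{2} = \frac{1}{2}(x_0+v) + \frac{1}{2}(-x_0) \in K, $$
and $v/2 \in E_0$, hence $v/2 \in K \cap E_0$; that is, $(K - x_0)\cap E_0 \subseteq 2(K \cap E_0)$. Now translate so that the barycenter sits at the origin and identify $E_0$ with $\RR^\ell$: the translated measure is a log-concave probability measure on $\RR^\ell$ with barycenter $0$, supported in the convex body $(K-x_0)\cap E_0$, and $K \cap E_0$ is a centrally-symmetric convex body in $E_0$, so $\int_{K\cap E_0} x\, dx = 0$. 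Applying Lemma~\ref{lem_1053_} in $E_0$ to this measure, with the body $V$ there taken to be $K \cap E_0$, gives
$$ \nu\bigl(x_0 + r(K \cap E_0)\bigr) \;\geq\; \frac{r^\ell\, Vol_\ell(K \cap E_0)}{Vol_\ell\bigl( (K-x_0)\cap E_0 \;-\; r(K \cap E_0) \bigr)}. $$
Since $(K-x_0)\cap E_0 - r(K\cap E_0) \subseteq 2(K\cap E_0) + r(K\cap E_0) = (2+r)(K\cap E_0)$ — using the containment above, the symmetry $-r(K\cap E_0) = r(K\cap E_0)$, and $aC + bC = (a+b)C$ for a convex set $C$ — the denominator is at most $(2+r)^\ell Vol_\ell(K\cap E_0)$, whence
$$ \nu\bigl(E \cap (x_0 + rK)\bigr) = \nu\bigl(x_0 + r(K\cap E_0)\bigr) \;\geq\; \left(\frac{r}{2+r}\right)^\ell = I(r), $$
with the case $r=0$ trivial. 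This is precisely the $(K,\ell,I)$-peak property of $\vphi$, and Theorem~\ref{prop_941} completes the proof.

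I do not expect a serious obstacle here: the two ingredients — Spingarn's inequality in the guise of Lemma~\ref{lem_1053_}, and the one-line containment $(K-x_0)\cap E_0 \subseteq 2(K\cap E_0)$ forced by central symmetry — fit together at once, so the only real content is recognizing the correct pair $(V,I) = (K,\, (r/(2+r))^\ell)$. The single mild technicality is that $f$ is given only on $K$; as in the proof of Theorem~\ref{thm_section} one simply extends it to $\RR^n$ before applying Theorem~\ref{prop_941}.
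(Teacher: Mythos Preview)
Your proof is correct and follows essentially the same route as the paper: verify the $(K,\ell,I)$-peak property with $I(r)=(r/(2+r))^\ell$ by applying Lemma~\ref{lem_1053_} on the $\ell$-dimensional slice together with the containment $K\cap E \subseteq x_0 + 2(K\cap E_0)$, then invoke Theorem~\ref{prop_941}. The only cosmetic difference is that the paper obtains the containment by citing the earlier inclusion~(\ref{eq_947}) with an unspecified center $b$, whereas you derive it directly with $b=x_0$ via the one-line convexity argument; your treatment of the Tietze extension and the identity $(x_0+rK)\cap E = x_0 + r(K\cap E_0)$ is also slightly more explicit.
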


\begin{proof} Set $V = K$ and $I(r) = r^{\ell} / (2 + r)^{\ell}$. Thanks to Theorem~\ref{prop_941}, all we need is to verify that the log-concave density $\vphi$
of $\mu$ has the $(V, \ell, I)$-peak property. Thus, let $E \subseteq \RR^n$ be an $\ell$-dimensional affine subspace,
and let $\psi: E \rightarrow [0, +\infty)$ be log-concave with $\int_E {\vphi \cdot \psi} = 1$. The affine subspace $E$ is a translate of a certain linear subspace  $F \subseteq \RR^n$.
The inclusion (\ref{eq_947}) proven above amounts to the existence of a certain point $b \in \RR^n$ with
\begin{equation}  K \cap E \subseteq b + 2 (K \cap F). \label{eq_1350} \end{equation}
 Write $\nu$ for the probability measure on $E$ with density $1_E \cdot \vphi \psi$,
a log-concave measure in $E$ supported in $K \cap E$. The convex set $K \cap F$ is centrally-symmetric, and in particular its barycenter lies at the origin. It follows
from Lemma~\ref{lem_1053_} that for a certain point $x_0 = \int x d\nu(x) \in E$ and for all $r > 0$,
\begin{equation}  \frac{1}{r^\ell \cdot Vol_{\ell}(K \cap F)} \cdot \nu \left( x_0 + r (K \cap F) \right) \geq \frac{1}{Vol_{\ell} \left( K \cap E - r (K \cap F) \right)}. \label{eq_1351}
\end{equation}
From (\ref{eq_1350}) and (\ref{eq_1351}), for all $r \geq 0$,
$$  \nu \left( x_0 + r (K \cap F) \right) \geq \frac{r^\ell \cdot Vol_{\ell}(K \cap F)}{Vol_{\ell} \left( b + 2(K \cap F) + r (K \cap F) \right)}   = I(r). \label{eq_1352}
$$
We have verified the $(V, \ell, I)$-peak property of $\vphi$. The conclusion  now follows from Theorem~\ref{prop_941}.
\end{proof}

Note that the ambient dimension $n$ does not appear in the estimate of Theorem \ref{thm_1148}. We therefore conjecture
that it is possible to formulate and prove an infinite-dimensional version of this theorem.

\section{The Gaussian $M$-position}
\setcounter{equation}{0}
\label{M_sec}

Theorem~\ref{thm_1752} and Theorem~\ref{thm_psitwo} are proven in this section.
We write
$c, C, \tilde{c}, c_1, C_2, \ldots$ for various positive universal constants, whose value is not
necessarily the same in different appearances.
Let $\mu$ be a probability measure on $\RR^n$ whose barycenter lies at the origin.
The $\psi_2$-constant of $\mu$ is the infimum over all $A > 0$ with the following property: For any linear functional $L: \RR^n \rightarrow \RR$,
\begin{equation}  \left( \int_{\RR^n} |L(x)|^p d \mu(x) \right)^{1/p} \leq A \sqrt{p} \cdot \int_{\RR^n} |L(x)| d \mu(x) \qquad \text{for all} \ p > 1.
\label{eq_926} \end{equation}
The covariance matrix of $\mu$, denoted by $Cov(\mu)$, is the matrix
whose $(i,j)$-entry is
$ \int_{\RR^n} x_i x_j d \mu(x)$.
Assume that $\mu$ has a log-concave density $\vphi$. The isotropic constant of $\mu$ is defined via $$ L_{\mu} = (\sup \vphi)^{1/n} \cdot ( \det Cov(\mu) )^{1/(2n)}. $$
See, e.g., the book by Brazitikos, Giannopoulos, Valettas and Vritisou \cite{Gian} for information about the isotropic constant.
In particular, it is shown in \cite[Proposition 2.3.12]{Gian} that $L_{\mu} > c$ for some universal constant $c > 0$.
Bourgain's slicing conjecture is equivalent to the hypothesis that $L_{\mu} < C$.
This conjecture was verified under $\psi_2$-assumptions by Bourgain (see \cite[Theorem 3.4.1]{Gian}).
The dependence on the $\psi_2$-constant was slightly improved by Dafnis and Paouris and by Klartag and Milman (see \cite[Theorem 7.5.15]{Gian}),
thus  when $\mu$ satisfies (\ref{eq_926}), we have the bound
$$  L_{\mu} < C \cdot A. $$
One says that the probability measure $\mu$ is {\it isotropic}, or that it is  in isotropic position, if its barycenter lies at the origin and $Cov(\mu)$ is a scalar matrix.
Recall that the $\psi_2$-constant of a convex body $K \subseteq \RR^n$ with barycenter at the origin
is defined to be the $\psi_2$-constant of $\mu_K$, the uniform probability measure on $K$.
The convex body $K$ is said to be in isotropic position if $\mu_K$ is.

\begin{lemma} Let $K \subseteq \RR^n$ be a convex body in isotropic position. Then for any $\ell=0,\ldots,n$
and an affine $\ell$-dimensional subspace $E \subseteq \RR^n$,
$$ Vol_{\ell} (K \cap E) \leq (C A)^{n-\ell} \cdot Vol_n(K)^{\ell/n}, $$
where $A$ is the $\psi_2$-constant of $K$ and $C > 0$ is a universal constant. \label{lem_1105}
\end{lemma}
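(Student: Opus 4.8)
The plan is to bound the $\ell$-dimensional volume of the slice $K \cap E$ by relating it to a marginal of the uniform measure on $K$, and then to use the $\psi_2$-hypothesis to control the relevant one-dimensional integrals. First I would reduce to the case where $E$ passes through the barycenter: translating $E$ only decreases $Vol_\ell(K \cap E)$ relative to the central slice by a bounded factor — in fact, by the Fradelizi/Hensley-type bound a central slice of a convex body dominates parallel slices up to a universal constant, so it suffices to prove the estimate for linear subspaces $E$ through the origin. Write $E^\perp$ for the orthogonal complement, of dimension $n-\ell$, and let $\pi = Proj_{E^\perp}$. The key identity is that the density at the origin of the push-forward measure $\pi_*(\mu_K)$ is exactly, up to the relevant normalizations, $Vol_\ell(K \cap E) / Vol_n(K)$.

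Next I would estimate the density at the origin of the marginal $\pi_*(\mu_K)$. Since $\mu_K$ is log-concave and isotropic, its marginal onto the $(n-\ell)$-dimensional subspace $E^\perp$ is again log-concave and isotropic (the covariance of the marginal is the restriction of the covariance, hence still scalar with the same constant, which for an isotropic convex body of volume $V$ is of order $V^{2/n}$). By the lower bound on isotropic constants, $L_{\pi_*\mu_K} \geq c$; more usefully I need an \emph{upper} bound on the density at the origin, i.e. on the isotropic constant of the $(n-\ell)$-dimensional marginal. Here is where the $\psi_2$-constant enters: marginals of a $\psi_2$ body inherit a $\psi_2$-bound with the same constant $A$ (the defining inequality (\ref{eq_926}) for a linear functional on $E^\perp$ is the same inequality for the corresponding linear functional on $\RR^n$), so the marginal $\pi_*\mu_K$ has $\psi_2$-constant at most $A$, and therefore, by the bound $L_\mu < C A$ quoted in the text, its isotropic constant is at most $CA$. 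Since the density at the origin of a log-concave isotropic measure $\mu$ in dimension $m$ satisfies $\varphi(0) \le (\sup\varphi) \le L_\mu^m \cdot (\det Cov(\mu))^{-1/2}$, I obtain that the density of $\pi_*\mu_K$ at the origin is at most $(CA)^{n-\ell}$ times the appropriate determinant normalization, which is $Vol_n(K)^{-(n-\ell)/n}$ up to universal constants because $\det Cov(\mu_K)^{1/(2n)} \asymp Vol_n(K)^{1/n}$ in isotropic position.

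Assembling these pieces:
$$ \frac{Vol_\ell(K \cap E)}{Vol_n(K)} \asymp (\pi_*\mu_K)\text{-density at }0 \le (CA)^{n-\ell} \cdot Vol_n(K)^{-(n-\ell)/n}, $$
which rearranges to $Vol_\ell(K \cap E) \le (CA)^{n-\ell} Vol_n(K)^{\ell/n}$ after absorbing universal constants into $C$. The main obstacle I anticipate is bookkeeping the normalizations: carefully tracking how $\det Cov(\mu_K)$, the isotropic normalization, and the factor relating the slice volume to the marginal density at the origin combine, so that the volume powers come out as $\ell/n$ exactly and all dimension-dependent constants are genuinely of the form (universal)$^{n-\ell}$ rather than (universal)$^n$. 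The handling of the non-central slice via a Fradelizi-type comparison is a minor point but should be stated, since the lemma as phrased allows arbitrary affine $E$.
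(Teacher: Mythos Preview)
Your approach is essentially the paper's: push $\mu_K$ forward onto $E^\perp$, observe that the marginal $\nu$ inherits the $\psi_2$-constant $A$, apply the bound $L_\nu \leq CA$, and combine with the universal lower bound $L_{\mu_K} \geq c$ to extract the volume estimate. Two small remarks: first, the Fradelizi reduction to central slices is unnecessary, since for \emph{any} affine $E$ the marginal density at the single point $Proj_{F^\perp}(E)$ equals $Vol_\ell(K\cap E)/Vol_n(K)$, and this is trivially $\leq \sup\varphi$, which is precisely the quantity entering $L_\nu$; second, your assertion $\det Cov(\mu_K)^{1/(2n)} \asymp Vol_n(K)^{1/n}$ is Bourgain's slicing conjecture---only the lower bound $\geq c\,Vol_n(K)^{1/n}$ (i.e.\ $L_{\mu_K}\geq c$) is known unconditionally, but that is exactly the direction your argument uses.
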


\begin{proof}
Let $\lambda > 0$ be such that $Cov(\mu_K) = \lambda^2 \cdot Id$, where $Id$ is the identity matrix. Then,
\begin{equation}  L_{\mu_K} = Vol_n(K)^{-1/n} \cdot \det(Cov(\mu_K))^{1/(2n)} = Vol_n(K)^{-1/n} \cdot \lambda. \label{eq_1011}
\end{equation}
Let $F$ be the linear $\ell$-dimensional subspace which is a translate of $E$.
Define $\nu = (Proj_{F^{\perp}})_* \mu_K$. Then $\nu$ is a log-concave probability measure in $F^{\perp}$ with barycenter at the origin.
Moreover, the $\psi_2$-constant of $\nu$ is at most $A$. The covariance matrix of $\nu$ (or more precisely,
the covariance operator) equals $\lambda^2$ times the identity. Write $\vphi: F^{\perp} \rightarrow [0, \infty)$ for  the log-concave density of $\nu$.
Then,
\begin{equation}  L_{\nu} = (\sup \vphi)^{\frac{1}{n-\ell}} \cdot \det(Cov(\nu))^{\frac{1}{2(n-\ell)}} = \lambda \cdot (\sup \vphi)^{\frac{1}{n-\ell}} \geq \lambda \left( \frac{Vol_{\ell}(K \cap E)}{Vol_n(K)} \right)^{\frac{1}{n-\ell}}. \label{eq_1012}
\end{equation}
 From (\ref{eq_1011}) and (\ref{eq_1012}),
$$ (Vol_{\ell}(K \cap E))^{\frac{1}{n-\ell}} \leq \frac{L_{\nu}}{L_{\mu_K}} \cdot Vol_n(K)^{\frac{1}{n-\ell} - \frac{1}{n}}
\leq C A \cdot Vol_n(K)^{\frac{\ell}{n(n-\ell)}},
$$ where we used that $L_{\nu} < C \cdot A$ and $L_{\mu_K} > c$ in the last passage. \end{proof}

\begin{corollary} Let $K \subseteq \RR^n$ be a convex body in isotropic position. Write
$A > 0$ for the $\psi_2$-constant of $K$. Then for any $\ell=1,\ldots,n$ and a continuous function $f: K \rightarrow \RR^{\ell}$,
there exists $t \in \RR^{\ell}$ with
$$ Vol_{n-\ell}^*(f^{-1}(t)) \geq \left( \frac{c}{A} \cdot Vol_n(K)^{\frac{1}{n}} \right)^{n - \ell}, $$
where $c > 0$ is a universal constant. \label{cor_1716}
\end{corollary}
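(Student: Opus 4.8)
The plan is to obtain this corollary by feeding the section estimate of Lemma~\ref{lem_1105} into Theorem~\ref{thm_section}; no new geometric ideas are required. First I would apply Theorem~\ref{thm_section} to the convex body $K$ and the continuous map $f\colon K \to \RR^\ell$, which gives
$$ \sup_{t \in \RR^\ell} Vol_{n-\ell}^*(f^{-1}(t)) \;\geq\; \frac{Vol_n(K)}{\displaystyle\sup_{E \in AG_{n,\ell}} Vol_\ell(K \cap E)}. $$
Since $K$ is in isotropic position with $\psi_2$-constant $A$, Lemma~\ref{lem_1105} bounds every section in the denominator by $Vol_\ell(K \cap E) \leq (CA)^{n-\ell}\, Vol_n(K)^{\ell/n}$. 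Substituting this and using $1 - \ell/n = (n-\ell)/n$ yields
$$ \sup_{t \in \RR^\ell} Vol_{n-\ell}^*(f^{-1}(t)) \;\geq\; \frac{Vol_n(K)}{(CA)^{n-\ell}\, Vol_n(K)^{\ell/n}} \;=\; \left( \frac{Vol_n(K)^{1/n}}{CA} \right)^{n-\ell}. $$

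What remains is bookkeeping: passing from this bound on the supremum to a single good value $t$, and fixing the universal constant. Writing $X$ for one half of the right-hand side above, the inequality $\sup_t Vol_{n-\ell}^*(f^{-1}(t)) \geq 2X$ already forces some $t \in \RR^\ell$ with $Vol_{n-\ell}^*(f^{-1}(t)) \geq X$; this uses only the definition of the supremum, so no semicontinuity of the fiber volume in $t$ is needed. In the range $1 \leq \ell \leq n-1$ we have $n - \ell \geq 1$, hence $\tfrac12 \geq 2^{-(n-\ell)}$, and therefore $X \geq \bigl( \tfrac{1}{2C}\, Vol_n(K)^{1/n}/A \bigr)^{n-\ell}$, so the choice $c = 1/(2C)$ works. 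The case $\ell = n$ is trivial, since then the exponent $n-\ell$ vanishes and the asserted bound $Vol_0^*(f^{-1}(t)) \geq 1$ holds for any $t$ in the image of $f$, its fiber being nonempty.

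I do not expect any genuine obstacle here. The entire content of the statement sits in Theorem~\ref{thm_section} --- itself the culmination of the multidimensional localization machinery of Sections~\ref{sec2}--\ref{sec4} --- together with Lemma~\ref{lem_1105}, which already packages the relevant consequences of the regularity of the isotropic constant under marginals and of Bourgain's slicing inequality under $\psi_2$-hypotheses. The only point to watch is that the constant $c$ come out genuinely independent of $\ell$ and $n$, which the computation above confirms.
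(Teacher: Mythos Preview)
Your proposal is correct and follows exactly the paper's approach: combine Theorem~\ref{thm_section} with the section bound of Lemma~\ref{lem_1105} and simplify. The paper's proof is the same one-line computation; your additional bookkeeping (passing from the supremum to an actual $t$ via a factor of $2$ absorbed into the universal constant, and the trivial case $\ell=n$) is more explicit than the paper, which simply leaves the sup and lets the unspecified constant absorb the slack.
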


\begin{proof} By Theorem~\ref{thm_section} and Lemma~\ref{lem_1105},
\begin{equation*}
  \sup_{t \in \RR^{\ell}} Vol_{n-\ell}^*(f^{-1}(t)) \geq \frac{Vol_n(K)}{ (C A)^{n-\ell} \cdot Vol_n(K)^{\ell/n} } = \left( \frac{\tilde{c}}{A} \cdot Vol_n(K)^{1/n} \right)^{n - \ell}.
  \tag*{\qedhere}
\end{equation*}
\end{proof}

\medskip
A fundamental
component of high-dimensional convex geometry is Milman's  theorem on the existence of an $M$-ellipsoid.
We shall use the following version of this theorem, see Milman \cite{Mil} or Pisier's book \cite[Chapter 7]{Pis}:

\begin{theorem} Let $K \subseteq \RR^n$ be a centrally-symmetric convex body. Then there exists an ellipsoid
$\cE \subseteq \RR^n$ with $Vol_n(K) = Vol_n(\cE)$ and
$$ Vol_n(K \cap \cE) \geq c^n \cdot Vol_n(K), $$
where $c > 0$ is a universal constant. \label{thm_1008}
\end{theorem}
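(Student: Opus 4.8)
The plan is to reduce the statement to a covering estimate and then to invoke Milman's theorem on the existence of an $M$-ellipsoid in its covering-number form.

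First I would normalize: since both sides of the asserted inequality scale the same way under dilations of $\RR^n$, one may assume $Vol_n(K) = Vol_n(B^n)$. The reduction I have in mind is: it suffices to find a volume-preserving linear map $T \in SL_n(\RR)$ such that $T(K)$ can be covered by at most $e^{Cn}$ translates of the Euclidean unit ball $B^n$, with $C>0$ universal. Granting this, from $T(K) \subseteq \bigcup_{i=1}^{e^{Cn}}(x_i + B^n)$ there is an index $i$ with $Vol_n\big(T(K)\cap(x_i+B^n)\big)\geq e^{-Cn}Vol_n(K)$; setting $K_0 = \big(T(K)\cap(x_i+B^n)\big)-x_i$ one has $K_0\subseteq B^n$, while $x_i+K_0\subseteq T(K)$ and, by the central symmetry of $K$, also $-x_i-K_0\subseteq T(K)$, so that $\tfrac12(K_0-K_0)\subseteq T(K)\cap B^n$ by convexity. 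The Brunn--Minkowski inequality gives $Vol_n\big(\tfrac12(K_0-K_0)\big)\geq Vol_n(K_0)\geq e^{-Cn}Vol_n(K)$, and since $\det T=1$,
$$
Vol_n\big(K\cap T^{-1}(B^n)\big)=Vol_n\big(T(K)\cap B^n\big)\geq e^{-Cn}Vol_n(K),
$$
so $\cE:=T^{-1}(B^n)$, an ellipsoid of volume $Vol_n(B^n)=Vol_n(K)$, is the desired ellipsoid with $c=e^{-C}$. This part is entirely elementary.

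The remaining point -- finding such a $T$ -- is exactly (one of the equivalent forms of) Milman's $M$-ellipsoid theorem, for which I do not expect an elementary proof; here is where the real work lies, and I would simply invoke it after recalling the ingredients. The classical route is Milman's isomorphic symmetrization: starting from $K$ placed by John's theorem at Banach--Mazur distance at most $\sqrt n$ from $B^n$, one iteratively replaces the current body by a Minkowski average, or an intersection, with a carefully chosen ellipsoid so that the distance to the Euclidean ball drops sharply at each stage while the volume changes by a controlled factor; after a bounded number of stages the body is an ellipsoid and the accumulated volume factors stay within $e^{Cn}$. The per-stage volume estimates use the Rogers--Shephard inequality and the Bourgain--Milman inequality $Vol_n(L)\,Vol_n(L^{\circ})\geq c^n\,Vol_n(B^n)^2$. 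An alternative, better suited to the spirit of this section, is to take $T_0(K)$ to be the Gaussian $M$-position of Bobkov -- the volume-preserving linear image of $K$ maximizing $\gamma_n(T_0(K))$ (a maximizer exists since $\gamma_n(T(K))\to0$ as $\|T\|\to\infty$ with $\det T=1$): first-order optimality forces $\int_{T_0(K)}x\otimes x\,d\gamma_n(x)$ to be a scalar matrix, and this, together with the lower bound $\max_{T\in SL_n(\RR)}\gamma_n(T(K))\geq c^n$ and standard Gaussian concentration, shows that a Euclidean ball of volume $Vol_n(K)$ is an $M$-ellipsoid for $T_0(K)$. In either case the genuine obstacle is producing the position with a dimension-free bound; the reduction above is routine. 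Full details are in Milman \cite{Mil}, Pisier \cite[Chapter 7]{Pis} and Bobkov \cite{bobkov}.
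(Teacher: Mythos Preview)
The paper does not give a proof of this theorem; it is quoted as a known result with references to Milman \cite{Mil} and Pisier \cite[Chapter 7]{Pis}, and used as a black box. Your proposal is thus more detailed than the paper's own treatment: you add a correct elementary reduction from the covering-number formulation of the $M$-ellipsoid theorem and then, like the paper, defer to the same references for the hard part.

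One caution about your Bobkov alternative: in this paper's logical flow the bound $\gamma_n(T_0(K))\geq c^n$ for the Gaussian $M$-position is \emph{derived from} the present theorem (see the proof of Lemma~\ref{lem_1605}, where Bobkov's Corollary~3.3 is combined with Theorem~\ref{thm_1008} to obtain exactly this inequality), so invoking that bound here would be circular. Bobkov's result by itself only compares $\gamma_n(K)$ in the Gaussian $M$-position with $\sup_{\cE}Vol_n(K\cap\cE)$; it does not independently supply the lower bound $c^n$ for the latter quantity.
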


The ellipsoid $\cE$ from Theorem~\ref{thm_1008} is far from being a unique.
One possibility
for determining such an $M$-ellipsoid uniquely is to use a Gaussian minimization procedure.
This possibility is exploited in Bobkov \cite{bobkov},
where the following is proven (see also Rotem \cite[Remark 1]{rotem} for explanations regarding
the uniqueness part):

\begin{proposition} Let $K \subseteq \RR^n$ be a centrally-symmetric convex body of volume one.
Then there exists a unique symmetric, positive-definite, linear map $T_K \in SL_n(\RR)$
such that
\begin{equation} \gamma_n \left ( T_K( K)  \right ) = \sup_{T \in SL_n(\RR)} \gamma_n \left ( T(  K) \right ). \label{eq_1342}
\end{equation}
Moreover, let $\mu$ be the conditioning of $\gamma_n$ to $K$, i.e., $\mu(A) = \gamma_n(A \cap K) / \gamma_n(K)$ for all $A$.
Then the measure $\mu$ is isotropic. \label{prop_1400}
\end{proposition}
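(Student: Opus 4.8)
The plan is to run a standard compactness-plus-smooth-optimization argument to produce the maximizer, then extract the isotropy of $\mu$ from the first-order optimality conditions. First I would address existence: the map $T \mapsto \gamma_n(T(K))$ is continuous on $SL_n(\RR)$, so the only issue is that $SL_n(\RR)$ is non-compact. I would show the supremum is not attained ``at infinity'' by observing that if $T \in SL_n(\RR)$ has a very large operator norm then, since $\det T = 1$, some singular value is very small, hence $T(K)$ is squeezed into a thin slab through the origin and $\gamma_n(T(K))$ is tiny (bounded by the Gaussian measure of a slab of width $\to 0$ times a polynomial factor). Since $\gamma_n(K) > 0$ for $K = I$, the supremum is positive, so a maximizing sequence stays in a compact subset of $SL_n(\RR)$ and a maximizer $S$ exists. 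Replacing $S$ by a polar-decomposition representative (the value $\gamma_n(S(K))$ depends only on $S^t S$ because $\gamma_n$ is rotation-invariant and $K = -K$), I may take the maximizer to be symmetric positive-definite; call it $T_K$.

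Next I would derive the optimality condition. Since $SL_n(\RR)$ is a Lie group with Lie algebra $\mathfrak{sl}_n(\RR) = \{ H \, ; \, \mathrm{tr}(H) = 0\}$, for any trace-zero matrix $H$ the curve $t \mapsto e^{tH} T_K$ lies in $SL_n(\RR)$, and the function $g(t) = \gamma_n(e^{tH} T_K(K))$ has a maximum at $t=0$. Computing $g'(0)$: writing $L = T_K(K)$ and changing variables, $g(t) = \int_{e^{tH} L} (2\pi)^{-n/2} e^{-|x|^2/2} dx = \int_L (2\pi)^{-n/2} e^{-|e^{tH} y|^2/2} \, dy$ (using $\det e^{tH} = 1$). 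Differentiating under the integral at $t=0$ gives $g'(0) = -\int_L \langle H y, y \rangle (2\pi)^{-n/2} e^{-|y|^2/2} dy = 0$ for every trace-zero $H$, i.e. the matrix with entries $\int_L y_i y_j \, (2\pi)^{-n/2} e^{-|y|^2/2} dy$ is orthogonal to all trace-zero matrices under the trace pairing, hence is a scalar multiple of the identity. But this matrix is exactly $\gamma_n(L)$ times $Cov(\mu)$, where $\mu$ is the conditioning of $\gamma_n$ to $L = T_K(K)$; and the barycenter of $\mu$ vanishes since $L = -L$. Therefore $\mu$ is isotropic. (For the statement as phrased, where $\mu$ is the conditioning of $\gamma_n$ to $K$ rather than to $T_K(K)$, I would note that $T_K$ pushes this $\mu$ to the conditioning of $\gamma_n$ to $T_K(K)$ and that isotropy is what is meant after applying $T_K$; alternatively one reads the proposition as asserting isotropy of the conditioning of $\gamma_n$ to $T_K(K)$, matching the displayed formula~(\ref{eq_1342}).)

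For uniqueness I would invoke the cited references (Bobkov \cite{bobkov}, Rotem \cite{rotem}), the point being that $T \mapsto \log \gamma_n(T(K))$, reparametrized appropriately over symmetric positive-definite $T$ via $T = e^{H}$ with $H$ symmetric trace-zero, is \emph{strictly} concave along the relevant directions — this follows from the Gaussian correlation-type / Brunn--Minkowski-type convexity of $t \mapsto \log \gamma_n(e^{tH}(K))$ and strictness because $K$ has nonempty interior — so the critical point is unique. The main obstacle I anticipate is making the differentiation under the integral sign and the strict-concavity claim fully rigorous: the set $L$ is only a convex body (its boundary has measure zero, so the dominated convergence argument for $g'(0)$ goes through with the Gaussian weight as an integrable dominating function), and the strict concavity requires care, which is why I would lean on the existing literature for the uniqueness half rather than reprove it.
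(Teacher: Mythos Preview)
The paper does not prove this proposition at all: it is quoted from Bobkov \cite{bobkov}, with the uniqueness clause attributed to Rotem \cite{rotem}. Your sketch is therefore not being compared against an in-paper argument but against the cited literature, and it matches the standard route there: compactness in $SL_n(\RR)$ via the singular-value degeneration (if $\sigma_1 \to \infty$ then $\sigma_n \leq \sigma_1^{-1/(n-1)} \to 0$, so $T(K)$ lies in a thin slab and $\gamma_n(T(K)) \to 0$), polar decomposition to reduce to symmetric positive-definite maximizers, and the first-order condition $\int_{T_K(K)} \langle Hy,y\rangle \, e^{-|y|^2/2}\,dy = 0$ for all trace-zero $H$ forcing the covariance to be scalar. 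Your handling of the differentiation is fine since $T_K(K)$ is bounded, and your decision to cite the references for the strict-concavity/uniqueness step mirrors exactly what the paper does.

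One remark: you correctly flag the ambiguity in the ``Moreover'' clause. As written, $\mu$ is the conditioning of $\gamma_n$ to $K$, which is isotropic only when $T_K = \id$; the subsequent use of the proposition in the paper (Lemma~\ref{lem_1605}) is under the hypothesis that $K$ is already in the Gaussian $M$-position, confirming that this is the intended reading.
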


It is clear that the supremum on the
right-hand side of (\ref{eq_1342}) is attained also for $T = U T_K$, where
$U \in O(n)$ and $O(n)$ is the group of orthogonal transformations. The requirement that the linear map $T_K$ in Proposition~\ref{prop_1400} be symmetric and positive-definite 
seems  a bit artificial,
yet it breaks the symmetry and allows us to consistently select a uniquely-defined maximizer.
In the case of an arbitrary convex body $K \subseteq \RR^n$, not necessarily centrally-symmetric of volume one, we set
$$ T_K := T_{\alpha \cdot (K - b_K) \cap (b_K - K)} $$
where $b_K = \int_K x dx / Vol_n(K)$ is the barycenter of $K$ and $\alpha^{-n} = Vol_n \{ (K - b_K) \cap (b_K - K) \}$.
Recall the definition of $Symm(K)$
from the Introduction.

\begin{proposition} Let $K \subseteq \RR^n$ be a convex body with barycenter at the origin. Then $T_K T = T T_K$ for all $T \in Symm(K)$. \label{prop_1432}
\end{proposition}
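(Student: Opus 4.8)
The plan is to leverage the uniqueness clause in Proposition~\ref{prop_1400} together with the rotation-invariance of the Gaussian measure $\gamma_n$. The whole argument is essentially forced by the fact that $T_K$ is characterized as the \emph{unique} symmetric positive-definite maximizer in $SL_n(\RR)$, so conjugating any maximizer by an element of $Symm(K)$ must return the same map.

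First I would reduce to the centrally-symmetric case. Since $K$ has barycenter at the origin, any $T \in Symm(K)$ is an isometry of $\RR^n$ fixing the barycenter of $K$; as isometries of $\RR^n$ are affine maps, such a $T$ is linear, i.e. $T \in O(n)$. Writing $K_0 = (-K) \cap K$ and using that the barycenter of $K$ is $0$, one gets $T(K_0) = (-T(K)) \cap T(K) = (-K) \cap K = K_0$, hence $T$ also preserves the rescaled body $\alpha K_0$ of volume one, where $\alpha^{-n} = Vol_n(K_0)$. Since by definition $T_K = T_{\alpha K_0}$, it suffices to prove the claim under the extra assumption that $K$ is centrally-symmetric of volume one.

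So I would then assume $K = -K$, $Vol_n(K) = 1$, fix $U \in Symm(K) \subseteq O(n)$, and set $S = U T_K U^{-1}$. I would check that $S$ again satisfies the three defining properties of the Gaussian $M$-position map: $\det S = \det T_K = 1$ so $S \in SL_n(\RR)$; $S$ is symmetric because $S^{t} = (U^{-1})^{t} T_K^{t} U^{t} = U T_K U^{-1} = S$ using $U^{-1} = U^{t}$ and $T_K^{t} = T_K$; and $S$ is positive-definite since $\langle S x, x \rangle = \langle T_K U^{-1} x, U^{-1} x \rangle > 0$ for $x \neq 0$. Finally, $S$ is again a maximizer: using $U(K) = K$ and the $O(n)$-invariance of $\gamma_n$,
$$ \gamma_n(S(K)) = \gamma_n\bigl(U T_K U^{-1}(K)\bigr) = \gamma_n\bigl(U T_K(K)\bigr) = \gamma_n\bigl(T_K(K)\bigr) = \sup_{T \in SL_n(\RR)} \gamma_n(T(K)). $$
By the uniqueness asserted in Proposition~\ref{prop_1400}, $S = T_K$, i.e. $U T_K U^{-1} = T_K$, which is the desired commutation relation $T_K U = U T_K$.

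There is no real obstacle here; the only points deserving care are the reduction step — verifying that every element of $Symm(K)$ is linear and orthogonal when the barycenter is at the origin, and that it carries over to $\alpha K_0$ — and the routine verification that conjugation by an orthogonal matrix preserves unit determinant, symmetry and positive-definiteness. If anything, the subtlety to watch is purely bookkeeping: making sure the normalization constant $\alpha$ in the definition $T_K = T_{\alpha K_0}$ is the same for $K$ and for $T(K) = K$, which is immediate since $T$ is volume-preserving.
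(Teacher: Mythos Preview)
Your proof is correct and follows essentially the same route as the paper: reduce to the centrally-symmetric volume-one body $\alpha K_0$, observe that every $T\in Symm(K)$ is orthogonal and preserves $\alpha K_0$, conjugate $T_K$ by $T$, and invoke the uniqueness clause of Proposition~\ref{prop_1400}. If anything, you are more explicit than the paper in verifying that the conjugate $S = U T_K U^{-1}$ remains symmetric positive-definite, which the paper merely asserts.
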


\begin{proof} Note that $Symm(K) \subseteq O(n)$ since the barycenter of $K$ lies at the origin. For any $T \in Symm(K)$ we have $T(K) = K$ and also $T(-K) = - T(K) = -K$ and hence $T(K \cap (-K)) = K \cap -K$. The map $S = T^{-1} T_K T \in SL_n(\RR)$ is symmetric
and positive-definite,
with $$ \gamma_n \left \{ S (K \cap (-K)) \right \} = \gamma_n \left \{ T_K T  (K \cap (-K)) \right \} =\gamma_n \left \{ T_K  (K \cap (-K)) \right \}. $$
According to  the uniqueness part of Proposition~\ref{prop_1400}, necessarily $S = T_K$ and $T_K T = T T_K$.
\end{proof}

Let $K \subseteq \RR^n$ be a centrally-symmetric convex body of volume one.
We say that $K$ is in the {\it Gaussian $M$-position} if $T_K = Id$.

\begin{lemma} Let $K \subseteq \RR^n$ be a centrally-symmetric convex body of volume one.
Assume that $K$ is in the Gaussian $M$-position. Let $\mu$ be the conditioning of $\gamma_n$ to $K$. Then the $\psi_2$-constant of $\mu$
is at most $C$  and moreover $\gamma_n(K) \geq c^n$. \label{lem_1605}
\end{lemma}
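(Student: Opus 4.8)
The plan is to prove the two assertions separately, beginning with $\gamma_n(K) \geq c^n$ since it will be needed for the bound on the $\psi_2$-constant. For this I would invoke Milman's $M$-ellipsoid theorem. By Theorem~\ref{thm_1008} there is an ellipsoid $\cE$ with $Vol_n(\cE) = Vol_n(K) = 1$ and $Vol_n(K \cap \cE) \geq c^n$, and since $K = -K$ we may take $\cE$ to be centered at the origin (replace $\cE$ by $\tfrac12(\cE - \cE)$ and apply Brunn--Minkowski to $\tfrac12((K\cap\cE)+(K\cap(-\cE))) \subseteq K \cap \tfrac12(\cE-\cE)$). Write $\cE = A(\rho B^n)$ with $A \in SL_n(\RR)$ and $\rho = Vol_n(B^n)^{-1/n}$, so that $\rho B^n$ has volume one; here $\rho \leq C_0\sqrt{n}$ for a universal $C_0$. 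Since $A$ is volume-preserving, $Vol_n\big(A^{-1}(K)\cap \rho B^n\big) = Vol_n(K\cap\cE) \geq c^n$, while the Gaussian density on $\rho B^n$ is at least $(2\pi)^{-n/2}e^{-\rho^2/2} \geq c_0^n$; hence $\gamma_n\big(A^{-1}(K)\big) \geq \gamma_n\big(A^{-1}(K)\cap\rho B^n\big) \geq (c\,c_0)^n =: c_1^n$. Finally, $K$ is in the Gaussian $M$-position, so $T_K = \id$ and therefore $\gamma_n(K) = \sup_{T\in SL_n(\RR)}\gamma_n(T(K)) \geq \gamma_n\big(A^{-1}(K)\big) \geq c_1^n$, which proves the second assertion.

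For the $\psi_2$-constant the starting observation is that $\mu$ is a $1$-log-concave measure on $\RR^n$ in the sense of Section~\ref{merits}: its density with respect to $\gamma_n$ is $1_K/\gamma_n(K)$, which is log-concave. By Proposition~\ref{prop_1400} the measure $\mu$ is moreover isotropic, say $Cov(\mu) = \lambda^2\,\id$, with barycenter at the origin. Fixing a unit vector $\theta$, I would note via the Pr\'ekopa--Leindler inequality that the one-dimensional marginal $\rho_\theta := \langle\theta,\cdot\rangle_*\mu$ has a density proportional to $e^{-t^2/2}h(t)$ for some log-concave $h:\RR\to[0,\infty)$, i.e. $\rho_\theta = e^{-\psi}\,dt$ with $\psi$ convex and $\psi'' \geq 1$. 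Such a one-dimensional measure has uniformly sub-Gaussian tails about its barycenter --- by the Bakry--\'Emery criterion together with the Herbst argument, or by Caffarelli's contraction theorem applied directly in $\RR^n$ (so that $\langle\theta,x\rangle$ is a $1$-Lipschitz image of the Gaussian). Since that barycenter is the origin, this yields, uniformly in $\theta\in S^{n-1}$,
$$ \Big(\int_{\RR^n}|\langle\theta,x\rangle|^p\,d\mu(x)\Big)^{1/p} \leq C\sqrt{p} \qquad \text{for all } p \geq 1. $$

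It remains to bound $\int_{\RR^n}|\langle\theta,x\rangle|\,d\mu$ from below, and here the first part of the lemma re-enters. The density $\vphi$ of $\mu$ satisfies $\sup\vphi \leq \big((2\pi)^{n/2}\gamma_n(K)\big)^{-1}$, so from the universal lower bound $L_\mu > c$ on the isotropic constant \cite[Proposition~2.3.12]{Gian} and the identity $L_\mu = (\sup\vphi)^{1/n}(\det Cov(\mu))^{1/(2n)} = (\sup\vphi)^{1/n}\lambda$ we get $\lambda \geq c\,(2\pi)^{1/2}\gamma_n(K)^{1/n} \geq c_2 > 0$, using $\gamma_n(K)\geq c_1^n$. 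Since $\rho_\theta$ is log-concave on $\RR$ with barycenter $0$ and $\int t^2\,d\rho_\theta = \lambda^2 \geq c_2^2$, the comparison of $L^1$ and $L^2$ norms for one-dimensional log-concave measures (equivalently, interpolating $\int t^2\,d\rho_\theta$ between $\int|t|\,d\rho_\theta$ and the $L^4$-bound of the previous paragraph) gives $\int_{\RR^n}|\langle\theta,x\rangle|\,d\mu = \int|t|\,d\rho_\theta \geq c_3 > 0$. Dividing the two estimates, the $\psi_2$-constant of $\mu$ is at most $C/c_3$, a universal constant.

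The step I expect to be the main obstacle is the $\psi_2$-bound: one must use three properties that pull in different directions --- the $1$-log-concavity of $\mu$, which forces sub-Gaussian decay of the marginals $\rho_\theta$ but says nothing about their size; the isotropy from Proposition~\ref{prop_1400}, which equalizes all the directional variances; and the lower bound $\lambda \geq c$, which prevents the marginals from degenerating --- and it is this last ingredient that makes it essential to have established $\gamma_n(K)\geq c^n$ first. By comparison, $\gamma_n(K)\geq c^n$ itself is a short consequence of the $M$-ellipsoid theorem together with the extremality built into the definition of the Gaussian $M$-position.
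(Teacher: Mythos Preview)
Your proposal is correct and follows essentially the same architecture as the paper's proof: first $\gamma_n(K)\geq c^n$ via the $M$-ellipsoid theorem combined with the extremality of the Gaussian $M$-position, then isotropy from Proposition~\ref{prop_1400}, then a sub-Gaussian upper bound on the directional moments from $1$-log-concavity, and finally a lower bound on the first directional moment via $L_\mu>c$ together with the just-proven estimate on $\gamma_n(K)$.

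The only noteworthy differences are in the tools invoked at two steps. For $\gamma_n(K)\geq c^n$ the paper quotes Bobkov's inequality $\gamma_n(K)\geq c^n\sup_{\cE}Vol_n(K\cap\cE)$ as a black box, whereas you unpack it directly (map $K$ by $A^{-1}$ so that the $M$-ellipsoid becomes the volume-one ball, bound the Gaussian density there, and use maximality); this is exactly the content of Bobkov's corollary. For the sub-Gaussian moments the paper uses the Pr\'ekopa--Leindler consequence $\int e^{\langle\xi,x\rangle}\,d\mu\leq e^{|\xi|^2/2}$ (Eldan--Lehec) and converts to moments by hand, while you appeal to Caffarelli's contraction theorem or Bakry--\'Emery/Herbst. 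These are interchangeable routes to the same inequality, so your argument is a faithful variant of the paper's.
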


\begin{proof}   By the ``Moreover'' part of Proposition~\ref{prop_1400},
\begin{equation}
Cov(\mu) = C_{\mu} \cdot Id, \label{eq_1633}
\end{equation}
where $Id$ is the identity matrix.
According to Bobkov \cite[Corollary 3.3]{bobkov},
\begin{equation}
\gamma_n(K) \geq c^n \cdot \sup_{\cE \subseteq \RR^n} Vol_n( K \cap \cE) \geq c_1^n \cdot Vol_n(K ) = c_1^n
\label{eq_1633_}
\end{equation}
where the supremum runs over all ellipsoids $\cE$ of volume one, and where the second inequality is the content of Theorem~\ref{thm_1008}.
Since $L_{\mu} > c$, from (\ref{eq_1633}) and (\ref{eq_1633_}),
\begin{equation}  C_{\mu} = L_{\mu}^2 \cdot \left( \frac{\gamma_n(K) }{ (2 \pi)^{-n/2} } \right)^{2/n} > c_2. \label{eq_849_} \end{equation}
By the comparison of moments of log-concave measures (e.g., \cite[Theorem 2.4.6]{Gian}) we deduce from (\ref{eq_1633}) and (\ref{eq_849_}) that
\begin{equation}
\int_{\RR^n} |\langle x, \theta \rangle| d \mu(x) \geq \tilde{c} \sqrt{ \int_{\RR^n} |\langle x, \theta \rangle|^2 d \mu(x) } = \tilde{c} \cdot \sqrt{C_{\mu}} \geq \bar{c}
\qquad (\theta \in S^{n-1}).
\label{eq_838}
\end{equation}
A  consequence of the Pr\'ekopa-Leindler inequality (see the proof of
Lemma 4 in Eldan and Lehec \cite{EL}) is that
$$ \int_{\RR^n} e^{\langle \xi, x \rangle} d \mu(x) \leq e^{|\xi|^2/2} \qquad \text{for} \ \xi \in \RR^n.
$$
Therefore
$$ \int_{\RR^n} e^{|\langle \xi, x \rangle|} d \mu(x) \leq \int_{\RR^n} e^{\langle \xi, x \rangle} d \mu(x)  + \int_{\RR^n} e^{-\langle \xi, x \rangle} d \mu(x)  \leq 2 e^{|\xi|^2/2}.
$$
Hence, for any $\theta \in S^{n-1}$ and an integer $p \geq 2$,
\begin{equation}
\int_{\RR^n} |\langle x, \theta \rangle|^p d \mu(x) \leq \frac{p!}{p^{p/2}} \int_{\RR^n} e^{\sqrt{p} |\langle x, \theta \rangle|} d \mu(x) \leq (C \sqrt{p} )^p \cdot
2 e^{p/2}, \label{eq_1629}
\end{equation}
where we used the inequality $(\sqrt{p} s)^p \leq p! \exp(\sqrt{p} s)$ for $s \geq 0$. The lemma now follows from (\ref{eq_838}) and (\ref{eq_1629}).
\end{proof}

An idea of K. Ball (see \cite[Section 2.5]{Gian}) is to represent the volume distribution of a log-concave
measure by a certain convex body.
Let $\rho: \RR^n \rightarrow [0, \infty)$ be an even, log-concave, probability density.
Write $\mu$ for the measure whose density is $\rho$ and define
$$  K(\mu) = \left \{ x \in \RR^n \, ; \, \int_0^\infty \vphi(rx) r^{n+1} dr \geq \frac{\vphi(0)}{n+2} \right \}. $$
Then $K(\mu)$ is a centrally-symmetric convex body (e.g., \cite[Theorem 2.5.5]{Gian} and see also \cite[Theorem 3.9]{MP} for the relation to the Busemann inequality).
If $\mu$ is isotropic, then also $K(\mu)$ is isotropic (e.g., \cite[Proposition 2.5.3(vi)]{Gian}). It is also known that
\begin{equation} 1 \leq \vphi(0) \cdot Vol_n(K(\mu)) \leq \frac{\left( (n+2)! \right)^{\frac{n}{n+2}}}{n!} \leq C. \label{eq_1249}
\end{equation}
Indeed, since $\vphi$ is log-concave and even, necessarily $\vphi(0) = \sup \vphi$.
Hence (\ref{eq_1249}) follows from \cite[Lemma 2.5.6 and Proposition 2.5.7(i)]{Gian}. Additionally, it follows from \cite[Lemma 2.5.2 and Proposition 2.5.3(iv)]{Gian} that
\begin{equation} K(\mu) \subseteq \overline{\{ x \in \RR^n \, ; \, \vphi(x) > 0 \}}. \label{eq_1701}
\end{equation}

\begin{lemma} Let $\mu$ be a probability measure on $\RR^n$ with an even, log-concave density $\vphi$.
Then the $\psi_2$-constant of $K(\mu)$ is at most $C$ times the $\psi_2$-constant of $\mu$.
\label{lem_1604}
\end{lemma}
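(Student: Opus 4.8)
The plan is to reduce the statement to a direction-by-direction comparison of moments of linear functionals, and then to invoke the standard equivalence between the $L^q$-centroid body of $\mu$ and that of the uniform probability measure on $K(\mu)$, valid for $1\le q\le n$. Throughout fix $\theta\in S^{n-1}$, write $L_\theta(x)=\langle x,\theta\rangle$, and for a probability measure $\lambda$ on $\RR^n$ set $\|L_\theta\|_{L^q(\lambda)}=\bigl(\int_{\RR^n}|L_\theta|^q\,d\lambda\bigr)^{1/q}$. Let $\nu$ denote the uniform probability measure on the centrally-symmetric convex body $K(\mu)$, so that the $\psi_2$-constant of $K(\mu)$ is by definition the $\psi_2$-constant of $\nu$. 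Since the $\psi_2$-constant of $\mu$ is the smallest $A$ with $\|L_\theta\|_{L^q(\mu)}\le A\sqrt q\,\|L_\theta\|_{L^1(\mu)}$ for every $\theta\in S^{n-1}$ and every $q>1$ (and likewise for $\nu$), the goal is to produce a universal constant $C$ with $\|L_\theta\|_{L^q(\nu)}\le C A\sqrt q\,\|L_\theta\|_{L^1(\nu)}$ for all $\theta$ and all $q\ge1$, where $A$ denotes the $\psi_2$-constant of $\mu$.

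The geometric input is the following consequence of K.~Ball's construction: since $\vphi$ is even and log-concave with $\vphi(0)=\sup\vphi$, and in view of (\ref{eq_1249}) and (\ref{eq_1701}), the $L^q$-centroid body of $\mu$ and the $L^q$-centroid body of $\nu$ are equivalent up to universal constants whenever $1\le q\le n$ (see \cite[Section 2.5]{Gian}). In the notation above this says that there are universal constants $0<c_0\le C_0$ with
\[
 c_0\,\|L_\theta\|_{L^q(\mu)}\ \le\ \|L_\theta\|_{L^q(\nu)}\ \le\ C_0\,\|L_\theta\|_{L^q(\mu)}
 \qquad\text{for all }\theta\in S^{n-1}\text{ and }1\le q\le n .
\]

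Granting this comparison, the conclusion is immediate for $1\le p\le n$:
\[
 \|L_\theta\|_{L^p(\nu)}\ \le\ C_0\,\|L_\theta\|_{L^p(\mu)}\ \le\ C_0\,A\sqrt p\,\|L_\theta\|_{L^1(\mu)}\ \le\ \frac{C_0}{c_0}\,A\sqrt p\,\|L_\theta\|_{L^1(\nu)},
\]
using the comparison at the exponents $p$ and $1$ at the two ends and the $\psi_2$-bound for $\mu$ in the middle. For $p>n$ we exploit that $K(\mu)$ is a bounded, centrally-symmetric convex body. Let $M=\max_{K(\mu)}L_\theta$ and pick $x^*\in K(\mu)$ with $L_\theta(x^*)=M$; by central symmetry $L_\theta\ge -M$ on $K(\mu)$, hence $\tfrac34 x^*+\tfrac14 K(\mu)\subseteq\{L_\theta\ge M/2\}\cap K(\mu)$, and the left-hand set has $\nu$-measure $4^{-n}$, so $\nu(\{L_\theta\ge M/2\})\ge 4^{-n}$. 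Therefore $\|L_\theta\|_{L^n(\nu)}\ge M/8$, while trivially $\|L_\theta\|_{L^p(\nu)}\le M$, so $\|L_\theta\|_{L^p(\nu)}\le 8\,\|L_\theta\|_{L^n(\nu)}$; combining this with the case $p=n$ already established and with $\sqrt n\le\sqrt p$ gives $\|L_\theta\|_{L^p(\nu)}\le 8\tfrac{C_0}{c_0}\,A\sqrt p\,\|L_\theta\|_{L^1(\nu)}$. Thus the $\psi_2$-constant of $K(\mu)$ is at most $8C_0/c_0$ times that of $\mu$.

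The crux is the centroid-body equivalence in the range $q\le n$: this is the substantive fact saying that $K(\mu)$ is a faithful convex-body form of the measure $\mu$, and once it is in hand everything else is routine bookkeeping. The one point requiring care is that this equivalence genuinely fails for $q$ much larger than $n$ — indeed $K(\mu)$ is bounded whereas the centroid bodies of $\mu$ generally are not — which is precisely why the regime $p>n$ must be handled separately, using nothing beyond the boundedness and central symmetry of $K(\mu)$.
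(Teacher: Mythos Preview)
Your argument is correct. The paper, however, avoids the case split $p\le n$ versus $p>n$ by carrying out the centroid-body comparison by hand rather than quoting it as a two-sided black box. Writing $r_p(\theta)=\bigl(\tfrac{n+p}{\vphi(0)}\int_0^\infty\vphi(r\theta)r^{n+p-1}\,dr\bigr)^{1/(n+p)}$ and integrating in polar coordinates, one finds
\[
\frac{1}{Vol_n(K(\mu))}\int_{K(\mu)}|\langle x,v\rangle|^p\,dx\ \le\ \int_{\RR^n}|\langle x,v\rangle|^p\,d\mu \qquad\text{for every }p\ge2,
\]
simply because $p\mapsto r_p(\theta)$ is nondecreasing and $\vphi(0)Vol_n(K(\mu))\ge1$ by (\ref{eq_1249}). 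Thus the one-sided inequality you need for the upper $p$th moment of $\nu$ holds for \emph{all} $p\ge2$, not only $p\le n$; it is only the reverse inequality that breaks down when $p\gg n$. The matching lower bound is taken at $p=2$ (where $r_p=r_2$ exactly and $\vphi(0)Vol_n(K(\mu))\le C$), and then transferred to $p=1$ via comparison of moments for log-concave measures. This makes your separate treatment of $p>n$ via the boundedness of $K(\mu)$ unnecessary --- what you are really citing from \cite[Section~2.5]{Gian} is precisely this polar computation, and once you open it up the restriction $q\le n$ disappears from the direction you use.
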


\begin{proof} For $\theta \in S^{n-1}$ and $p \geq 0$ set $$ r_p(\theta) = \left( \frac{n+p}{\vphi(0)} \cdot \int_0^\infty \vphi(r \theta) r^{n+p-1} dr  \right)^{\frac{1}{n+p}}. $$
Note that $ K(\mu) = \{ r \theta \, ; \, \theta \in S^{n-1}, 0 \leq r \leq r_2(\theta) \}$.
For any unit vector $v \in S^{n-1}$ and $p \geq 0$ we integrate in polar coordinates and obtain
\begin{equation} \int_{\RR^n} |\langle x, v\rangle|^p \vphi(x) dx = \frac{\vphi(0)}{n+p} \cdot \int_{S^{n-1}} |\langle \theta, v \rangle|^p \cdot r_p(\theta)^{n+p} d \theta
\label{eq_1213}
\end{equation}
and
\begin{equation} \int_{K(\mu)} |\langle x, v\rangle|^p dx  = \frac{1}{n+p} \int_{S^{n-1}}
|\langle \theta, v \rangle|^p \cdot r_2(\theta)^{n+p} d \theta. \label{eq_1214_} \end{equation}
According to \cite[Lemma 2.2.4]{Gian}, the
function $p \mapsto r_p(\theta)$ is increasing in $p > 0$. Therefore, from (\ref{eq_1249}), (\ref{eq_1213}) and (\ref{eq_1214_}),
\begin{equation}  \frac{1}{Vol_n(K(\mu))} \int_{K(\mu)} |\langle x, v\rangle|^p dx \leq \int_{\RR^n} |\langle x, v\rangle|^p \vphi(x) dx \qquad \text{for} \ p \geq 2,
\label{eq_1254} \end{equation}
while
\begin{equation}  \frac{1}{Vol_n(K(\mu))} \int_{K(\mu)} |\langle x, v\rangle|^2 dx \geq \frac{1}{C} \cdot \int_{\RR^n} |\langle x, v\rangle|^2 \vphi(x) dx.
\label{eq_1255} \end{equation}
Write $A$ for the $\psi_2$-constant of $\mu$. Then by (\ref{eq_1254}) and (\ref{eq_1255}), for all $p \geq 2$,
\begin{align}  \nonumber \left(  \int_{K(\mu)} |\langle x, v\rangle|^p \frac{dx}{Vol_n(K(\mu))} \right)^{1/p} & \leq C A \sqrt{p} \left(  \int_{K(\mu)} |\langle x, v\rangle|^2 \frac{dx}{Vol_n(K(\mu))} \right)^{1/2} \\ & \leq \tilde{C} A \sqrt{p} \int_{K(\mu)} |\langle x, v\rangle| \frac{dx}{Vol_n(K(\mu))}, \label{eq_1602}
\end{align}
where we used comparison of moments (e.g., \cite[Theorem 2.4.6]{Gian}) in the last passage. According to (\ref{eq_1602}), the $\psi_2$-constant of $K(\mu)$ is at most $\tilde{C} A$.
\end{proof}

\begin{proposition} Let $K \subseteq \RR^n$ be a convex body with barycenter at the origin such that $K \cap (-K)$ is a convex body of volume one
in the Gaussian $M$-position.

\medskip Then there exists a centrally-symmetric, convex body $T \subseteq K$ in isotropic position whose $\psi_2$-constant is at most $C_1$,
such that
$$ \left( \frac{Vol_n(K)}{Vol_n(T)} \right)^{1/n} < C_2. $$
Here, $C_1, C_2 > 0$ are universal constants. \label{prop_1715}
\end{proposition}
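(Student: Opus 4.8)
The plan is to take $T$ to be the Ball body $K(\mu)$ built from the conditioning $\mu$ of the Gaussian measure $\gamma_n$ to $K_0 := K \cap (-K)$, and to check that each property demanded of $T$ (central symmetry, isotropic position, a bounded $\psi_2$-constant, and inclusion in $K$ with a controlled volume ratio) follows from the lemmas already proven. The density of $\mu$ with respect to Lebesgue measure is $\vphi(x) = e^{-|x|^2/2}\,1_{K_0}(x) \big/ \big( (2\pi)^{n/2} \gamma_n(K_0) \big)$; it is even because $K_0 = -K_0$, and log-concave because it is the product of a Gaussian and the indicator of a convex body, so the construction of $K(\mu)$ applies and $\vphi(0) = 1 / \big( (2\pi)^{n/2} \gamma_n(K_0) \big)$.

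First I would record that $\mu$ is isotropic: this is the ``Moreover'' part of Proposition~\ref{prop_1400}, applicable since $K_0$ is centrally-symmetric of volume one in the Gaussian $M$-position. Lemma~\ref{lem_1605} then gives that the $\psi_2$-constant of $\mu$ is at most a universal constant and that $\gamma_n(K_0) \geq c^n$. Now put $T = K(\mu)$. It is a centrally-symmetric convex body because $\vphi$ is even; it is in isotropic position because $\mu$ is isotropic (the Ball body of an isotropic measure is isotropic); and by Lemma~\ref{lem_1604} its $\psi_2$-constant is at most $C$ times that of $\mu$, hence at most a universal constant $C_1$. Finally, the containment (\ref{eq_1701}) gives $T = K(\mu) \subseteq \overline{\{ \vphi > 0 \}} = K_0 \subseteq K$.

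It remains to bound $(Vol_n(K)/Vol_n(T))^{1/n}$. The left inequality in (\ref{eq_1249}) together with the formula for $\vphi(0)$ gives
\[
Vol_n(T) = Vol_n(K(\mu)) \;\geq\; \frac{1}{\vphi(0)} \;=\; (2\pi)^{n/2}\, \gamma_n(K_0) \;\geq\; (2\pi)^{n/2} c^n .
\]
On the other side, since the barycenter of $K$ lies at the origin, the Rogers--Shephard-type inequality of Milman and Pajor yields $Vol_n(K) \leq 2^n\, Vol_n(K_0) = 2^n$. Combining the two estimates, $\big( Vol_n(K)/Vol_n(T) \big)^{1/n} \leq 2 / (\sqrt{2\pi}\, c) =: C_2$, a universal constant, which completes the argument.

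There is no serious obstacle here: the statement is essentially a repackaging of Proposition~\ref{prop_1400}, Lemma~\ref{lem_1605}, Lemma~\ref{lem_1604}, and the standard facts (\ref{eq_1249}) and (\ref{eq_1701}) about the Ball body. The two points that need a little care are the one input genuinely external to the preceding development --- the bound $Vol_n(K) \leq 2^n\, Vol_n(K \cap (-K))$, which must be invoked precisely under the hypothesis that the barycenter of $K$ is at the origin --- and the bookkeeping of the explicit factor $(2\pi)^{n/2}$ coming from $\vphi(0)$, which is what guarantees that the resulting $C_2$ is a genuine universal constant, independent of $n$.
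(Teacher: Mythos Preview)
Your proof is correct and follows essentially the same route as the paper: define $T = K(\mu)$ for $\mu$ the Gaussian conditioned to $K \cap (-K)$, invoke Proposition~\ref{prop_1400}, Lemma~\ref{lem_1605}, Lemma~\ref{lem_1604}, the inclusion (\ref{eq_1701}), the volume bound (\ref{eq_1249}), and the Rogers--Shephard inequality $Vol_n(K) \leq 2^n Vol_n(K \cap (-K))$. The only cosmetic difference is that the paper phrases the density bound as $\vphi(0) \leq C^n$ (its (\ref{eq_1655})) rather than unpacking the explicit $(2\pi)^{n/2}$ factor as you do.
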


\begin{proof} The Rogers-Shephard inequality (e.g., \cite[Theorem 4.1.20]{AGM}) states that
\begin{equation}  1 = Vol_n(K \cap (-K)) \geq 2^{-n} \cdot Vol_n(K). \label{eq_1703} \end{equation}
Write $\mu$ for the conditioning of $\gamma_n$ to the convex body $K \cap (-K)$, and let $\vphi$ be the log-concave probability density of $\mu$. According to Lemma
\ref{lem_1605},
\begin{equation}  \vphi(0) = \frac{(2 \pi)^{-n/2}}{\gamma_n(K \cap (-K))} \leq C^n. \label{eq_1655}
\end{equation}
By Proposition~\ref{prop_1400}, the measure $\mu$ is isotropic. According to Lemma~\ref{lem_1605}, the $\psi_2$-constant of $\mu$
is at most $C$. By Lemma~\ref{lem_1604}, also the $\psi_2$-constant of $T := K(\mu)$ is at most $C_1$ while $T$ is a centrally-symmetric convex body in isotropic position.
Additionally, since $\mu$ is supported in $K \cap (-K)$, we learn from (\ref{eq_1701}) that
$$ T \subseteq K \cap (-K) \subseteq K. $$
From (\ref{eq_1249})  and (\ref{eq_1655}) we deduce that $Vol_n(T) > c^{-n}$.
In view of (\ref{eq_1703}) we conclude that $Vol_n(K) \leq C_2^n \cdot Vol_n(T)$, completing the proof.
\end{proof}

Recall that if $K \subseteq \RR^n$ is a centrally-symmetric
convex body of volume one, then the body $T_K(K)$ is in the Gaussian $M$-position.

\begin{proof}[Proof of Theorem~\ref{thm_psitwo}] The $\psi_2$-constant of a convex body
is the same as the $\psi_2$-constant of its image under an invertible, linear transformation. Hence, in proving Theorem~\ref{thm_psitwo},
we may apply a linear map (just a dilation) and assume that
$$ Vol_n(K \cap (-K)) = 1. $$
Applying another linear map (the map $T_K$), we may further assume that $K \cap (-K)$ is in the Gaussian $M$-position.
The conclusion of the theorem now follows from Proposition~\ref{prop_1715}.
\end{proof}

\begin{proof}[Proof of Theorem~\ref{thm_1752}] The case $\ell = n$ is trivial, hence let us assume that $1 \leq \ell \leq n-1$.
Translating, we may also assume that the barycenter of $K$ lies at the origin.
Denote $\alpha = Vol_n(K \cap (-K))^{1/n}$. Then $\alpha \geq 1/2$ by the Rogers-Shephard inequality.
Setting $\tilde{K} = T_K(K)$, we obtain that
$$ K_1 = \frac{\tilde{K} \cap (-\tilde{K})}{\alpha} $$
is a centrally-symmetric convex body of volume one in the Gaussian $M$-position.
Apply 
Proposition~\ref{prop_1715} for $K_1$ to obtain the convex body $T \subseteq K_1$.
Since $T$ is in isotropic position with a $\psi_2$-constant bounded by $C$, according to 
Corollary~\ref{cor_1716},
$$ w_{\ell}(T) \geq  c \cdot Vol_n(T)^{\frac{1}{n}} \geq \tilde{c} \cdot Vol_n(K_1)^{\frac{1}{n}} = \tilde{c}. $$
Thanks to the homogeneity and monotonicity of waists,
$$ w_{\ell}(\tilde{K}) = \alpha \cdot w_{\ell}(\tilde{K} / \alpha) \geq \alpha \cdot w_{\ell}(K_1)  \geq \alpha \cdot w_{\ell}(T) \geq w_{\ell}(T) / 2 \geq \bar{c}, $$
completing the proof.
\end{proof}

\begin{corollary} Let $K \subseteq \RR^n$ be a convex body containing the origin in its interior.
Write $\| x \|_K = \inf \{ \lambda > 0 \, ; \, x / \lambda \in K \}$ for the Minkowski functional,
and $M(K) = \int_{S^{n-1}} \| x \|_K d \sigma(x)$, where $\sigma$ is the uniform probability measure on $S^{n-1}$. Then,
$$ w_{\ell}(K) \geq \frac{c}{\sqrt{n} \cdot M(K)} \qquad (\ell =1 ,\ldots, n), $$
where $c > 0$ is a universal constant.
\end{corollary}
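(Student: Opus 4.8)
The plan is to reduce the statement, using the $1$-homogeneity and monotonicity of $w_\ell$, to the claim that $w_\ell(L)\ge c$ for every convex body $L$ whose standard Gaussian mass is bounded below, and then to obtain this from the Gaussian waist inequality (Theorem~\ref{thm_gwaist2}). First I would convert $M(K)$ into such a Gaussian statement. Writing $g$ for a standard Gaussian vector in $\RR^n$ and integrating in polar coordinates, $\EE\|g\|_K=\EE|g|\cdot M(K)\le\sqrt{n}\,M(K)$ since $\EE|g|\le\sqrt{\EE|g|^2}=\sqrt n$. Put $\lambda_0=2\,\EE\|g\|_K\le 2\sqrt n\,M(K)$; by Markov's inequality, $\gamma_n(\lambda_0 K)=\PP(\|g\|_K\le\lambda_0)\ge 1/2$. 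Since $w_\ell(\lambda_0 K)=\lambda_0\, w_\ell(K)$, it suffices to show $w_\ell(L)\ge c$ for the body $L=\lambda_0 K$ (indeed for any convex body $L$ with $\gamma_n(L)\ge 1/2$); then $w_\ell(K)=\lambda_0^{-1}w_\ell(L)\ge c/(2\sqrt n\,M(K))$.

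Next, for such an $L$ I would shrink it slightly: let $L_0=\{x\in\RR^n:\ x+\delta B^n\subseteq L\}$, a convex body contained in $L$, with $\delta>0$ chosen small enough that $\gamma_n(L_0)\ge 1/4$ (possible because $\gamma_n(L_0)$ increases to $\gamma_n(L)$ as $\delta\downarrow0$, the boundary of $L$ being Lebesgue-null). Let $\mu=\gamma_n|_{L_0}/\gamma_n(L_0)$. Then the density of $\mu$ multiplied by $e^{|x|^2/2}$ is a constant multiple of $1_{L_0}$, which is log-concave, so $\mu$ is $1$-log-concave on $\RR^n$. Given any continuous competitor $f:L\to\RR^{n-\ell}$ in the definition of $w_\ell(L)$, extend it by the Tietze theorem to a continuous $\tilde f:\RR^n\to\RR^{n-\ell}$ and apply Theorem~\ref{thm_gwaist2} (with $n-\ell$ in the role of $\ell$): there is $t\in\RR^{n-\ell}$ with $\mu(\tilde f^{-1}(t)+rB^n)\ge\gamma_{n-\ell}(rB^{n-\ell})$ for all $r>0$.

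Finally I would translate this Gaussian neighbourhood bound into a lower bound on $Vol^*_{\ell}$ of the honest fiber $f^{-1}(t)=\tilde f^{-1}(t)\cap L$, using the same localization device as in the proof of Theorem~\ref{thm_section}: for $0<r\le\delta$ one has $(\tilde f^{-1}(t)+rB^n)\cap L_0\subseteq f^{-1}(t)+rB^n$, because a point of $L_0$ within distance $r$ of $\tilde f^{-1}(t)$ has its witnessing point of $\tilde f^{-1}(t)$ inside $L_0+rB^n\subseteq L$, hence inside $f^{-1}(t)$. Combining this with $\gamma_n\le(2\pi)^{-n/2}Vol_n$ and $\gamma_n\big((\tilde f^{-1}(t)+rB^n)\cap L_0\big)=\gamma_n(L_0)\,\mu(\tilde f^{-1}(t)+rB^n)$ gives $Vol_n\big(f^{-1}(t)+rB^n\big)\ge(2\pi)^{n/2}\gamma_n(L_0)\,\gamma_{n-\ell}(rB^{n-\ell})$. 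Dividing by $\beta_{n-\ell}r^{n-\ell}$ and letting $r\to0^+$, and using $\gamma_{n-\ell}(rB^{n-\ell})\sim(2\pi)^{-(n-\ell)/2}\beta_{n-\ell}r^{n-\ell}$, we get $Vol^*_{\ell}(f^{-1}(t))\ge(2\pi)^{\ell/2}\gamma_n(L_0)\ge(2\pi)^{\ell/2}/4$, so $\big(Vol^*_{\ell}(f^{-1}(t))\big)^{1/\ell}\ge\sqrt{2\pi}/4$. As $f$ was arbitrary this shows $w_\ell(L)\ge\sqrt{2\pi}/4$, and the corollary follows with, say, $c=\sqrt{2\pi}/8$.

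The step I expect to require the most care is this last conversion. Theorem~\ref{thm_gwaist2} only controls $\mu$ of a neighbourhood of the level set $\tilde f^{-1}(t)$ of the \emph{extension}, whereas the quantity in $w_\ell$ is the lower Minkowski $\ell$-volume of the level set of the \emph{original} $f$ inside $L$; these a priori differ both because $\tilde f$ has spurious preimages outside $L$ and because $Vol^*_{\ell}$ is not literally a measure of a thickening. The inner shrinking $L_0\subsetneq L$ is exactly what forces the fiber-localization inclusion to hold for small $r$, and one must also check that $\gamma_n(L_0)$ stays bounded below by a universal constant rather than degenerating (which is why $L_0$ is obtained from $L$ by an $\varepsilon$-erosion rather than by a fixed homothety).
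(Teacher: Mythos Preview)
Your argument is correct, but it follows a genuinely different route from the paper's. The paper intersects $K$ with the ball of radius $1/(2M(K))$, uses Markov's inequality on $S^{n-1}$ to show this intersection $T$ has at least half the volume of the ball, and then applies Theorem~\ref{thm_section} directly to $T$: since every $\ell$-dimensional section of $T$ is contained in a section of the ball, the ratio $Vol_n(T)/\sup_E Vol_\ell(T\cap E)$ is bounded below by an explicit ratio of ball volumes, which yields the estimate after monotonicity $w_{n-\ell}(K)\ge w_{n-\ell}(T)$. No Gaussian measure, no erosion, no Tietze extension is needed.

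Your approach instead rescales $K$ so that its Gaussian mass is at least $1/2$, conditions $\gamma_n$ to a slightly eroded copy, and invokes the $1$-log-concave waist inequality (Theorem~\ref{thm_gwaist2}) rather than Theorem~\ref{thm_section}. The erosion $L_0=\{x:x+\delta B^n\subseteq L\}$ is the device that makes the fiber-localization inclusion $(\tilde f^{-1}(t)+rB^n)\cap L_0\subseteq f^{-1}(t)+rB^n$ work, and you handle it cleanly. What your route buys is an explicit constant ($c=\sqrt{2\pi}/8$) and a demonstration that the corollary already follows from the Gaussian waist theorem alone; what the paper's route buys is brevity and a purely volumetric argument that avoids the extension/erosion bookkeeping.
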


\begin{proof} Denote $T = K \cap \left(\frac{1}{2M(K)} B^n \right)$.
By the Markov-Chebyshev inequality,
$$ \frac{Vol_n \left ( T \right )}{Vol_n \left ( \frac{1}{2M(K)} B^n\right )} \geq \sigma \left \{ x \in S^{n-1} \, ; \, \| x \|_K \leq 2 M(K) \right \} \geq \frac{1}{2}.
$$
According to Theorem~\ref{thm_section}, for $\ell=0,\ldots,n-1$,
\begin{align*}  w_{n-\ell}(K) & \geq w_{n-\ell}(T) \geq \left( \frac{Vol_n(T)}{\sup_{E \in AG_{n, \ell}} Vol_{\ell}(T \cap E) } \right)^{\frac{1}{n-\ell}}
\\ & \geq \left(  \frac{Vol_n \left ( \frac{1}{2M(K)} B^n\right )}{2 \cdot Vol_\ell \left ( \frac{1}{2M(K)} B^\ell \right ) } \right)^{\frac{1}{n-\ell}}
\geq \frac{c}{\sqrt{n} \cdot M(K)}. \tag*{\qedhere}
\end{align*}
\end{proof}

{
}

\bigskip
\noindent
Department of Mathematics, Weizmann Institute of Science, Rehovot 76100, Israel, and
School of Mathematical Sciences, Tel Aviv University, Tel Aviv 69978, Israel.  \\   e-mail:   \verb"klartagb@tau.ac.il"


\begin{thebibliography}{99}
  \setlength{\itemsep}{1pt}

\bibitem{AFP} Ambrosio, L., Fusco, N., Pallara, D., {\it Functions of bounded variation and free discontinuity problems. }
Oxford University Press, 2000.

\bibitem{AGM} Artstein-Avidan, S., Giannopoulos, A., Milman, V. D.,
{\it Asymptotic geometric analysis. Part I.}
American Mathematical Society,  2015.

\bibitem{ball} Ball, K., {\it Ellipsoids of maximal volume in convex bodies.}
Geom. Dedicata, Vol. 41, No. 2, (1992), 241–-250.

\bibitem{BL} Benyamini, Y., Lindenstrauss, J., {\it Geometric nonlinear functional analysis. Vol. 1. }
American Mathematical Society, 2000.

\bibitem{bobkov} Bobkov, S. G., {\it On Milman's ellipsoids and M-position of convex bodies.} Concentration, functional inequalities and isoperimetry,
Contemp. Math., 545, American Mathematical Society, (2011), 23–-33.

\bibitem{Bourgain} Bourgain, J., {\it Geometry of Banach spaces and harmonic analysis. }
Proceedings of the International Congress of Mathematicians, Vol. 1, 2 (Berkeley, Calif., 1986), American Mathematical  Society, (1987), 871–-878.

\bibitem{Gian} Brazitikos, S., Giannopoulos, A., Valettas, P., Vritsiou, B.-H., {\it  Geometry of isotropic convex bodies. }
American Mathematical Society, 2014.


\bibitem{EL} Eldan, R., Lehec, J., {\it
Bounding the norm of a log-concave vector via thin-shell estimates.}
Geom. aspects of Funct. Anal.,
Lecture Notes in Math., 2116, Springer, (2014), 107–-122.

\bibitem{Gr_filling} Gromov, M., {\it Filling Riemannian manifolds. } J. Diff. Geom., Vol. 18, No. 1, (1983),  1-–147.


\bibitem{Gr} Gromov, M., {\it Isoperimetry of waists and concentration of maps. }
Geom. Funct. Anal. (GAFA), Vol. 13, No. 1, (2003), 178–-215.

\bibitem{GM} Gromov, M., Milman, V. D., {\it
 Generalization of the spherical isoperimetric inequality to uniformly convex Banach spaces. }
 Compositio Math., Vol. 62, No. 3, (1987), 263–-282.


\bibitem{Guth} Guth, L., {\it The waist inequality in Gromov's work}.
In {\it The Abel Prize 2008–-2012.} Edited by Helge Holden and Ragni Piene. Springer,  (2014),
181--195.

\bibitem{H} Hirsch, M. W., {\it Differential topology.} Corrected reprint of the 1976 original. Graduate Texts in Mathematics, 33. Springer-Verlag, New York, 1994.

\bibitem{milnor}  Milnor, J. W., {\it Topology from the differentiable viewpoint. } Based on notes by David W. Weaver.
Revised reprint of the 1965 original.  Princeton University Press, 1997.

\bibitem{KLS} Kannan, R., Lov\'asz, L.,  Simonovits, M., {\it Isoperimetric problems for convex bodies and a localization lemma. }
Discrete Comput. Geom., Vol. 13, No. 3--4, (1995), 541–-559.



\bibitem{Krantz}
 Krantz, S. G., Parks, H. R., {\it A primer of real analytic functions. }
 Birkh\"auser-Verlag,  1992.


\bibitem{LS} Lov\'asz, L., Simonovits, M., {\it Random walks in a convex body and an improved volume algorithm. }
Random Structures Algorithms, Vol. 4, No. 4, (1993), 359–-412.


\bibitem{Mat}
Matou\v{s}ek, J., {\it Using the Borsuk-Ulam theorem. }  Written in cooperation with A. Bj\"orner and G. M. Ziegler. Springer-Verlag,  2003.

\bibitem{Mem} Memarian, Y.,
{\it On Gromov's waist of the sphere theorem.}
J. Topol. Anal., Vol. 3, No. 1, (2011) 7–-36.


\bibitem{Mil} Milman, V. D., {\it
An inverse form of the Brunn-Minkowski inequality, with applications to the local theory of normed spaces.} C. R. Acad. Sci. Paris S\'er. I Math., Vol. 302, No. 1, (1986), 25–-28.


\bibitem{MP} Milman, V. D., Pajor, A., {\it Isotropic position and inertia ellipsoids and zonoids of the unit ball of a normed $n$-dimensional space. }
Geom. aspects of Funct. Anal., Lecture Notes in Math., 1376, Springer, (1989), 64–-104.

\bibitem{PW}
 Payne, L. E., Weinberger, H. F., {\it  An optimal Poincar\'e inequality for convex domains. }
 Arch. Rational Mech. Anal., Vol. 5, (1960), 286–-292.


\bibitem{Pis}
Pisier, G., {\it The volume of convex bodies and Banach space geometry.}
Cambridge University Press,  1989.

\bibitem{roc} Rockafellar, R. T., {\it Convex analysis.} Reprint of the 1970 original.  Princeton University Press, 1997.


\bibitem{rotem} Rotem, L., {\it On isotropicity with respect to a measure.}
Geom. aspects of Funct. Anal., Lecture Notes in Math., 2116, Springer,
 (2014), 413-–422.

\bibitem{schneider}  Schneider, R., {\it Convex bodies: the Brunn-Minkowski theory. }
Cambridge University Press, 1993.

\bibitem{spingarn} Spingarn, J. E., {\it An inequality for sections and projections of a
convex set.} Proc. Amer. Math. Soc., Vol. 118, (1993), 1219-–1224.


\bibitem{Vaaler} Vaaler, J. D., {\it A geometric inequality with applications to linear forms.}
Pacif. J. Math., Vol. 83, No. 2, (1979), 543–-553.


\end{thebibliography}
\end{document}